\documentclass[11pt]{amsart}
\usepackage{amsthm,amssymb,amsfonts,amscd}
\usepackage{mathtools,mathrsfs}
\usepackage{microtype}
\usepackage{array}
\usepackage{enumitem}
\usepackage{xcolor}
\usepackage[colorlinks=true,
linkcolor=black,
urlcolor=blue,
citecolor=black]{hyperref}
\usepackage{cleveref}
\usepackage[margin=1in]{geometry}
\allowdisplaybreaks

\usepackage{silence}
\setlist[itemize]{leftmargin=1.8em,itemsep=2pt,topsep=4pt}
\setlist[enumerate]{leftmargin=2.2em,itemsep=2pt,topsep=4pt}

\theoremstyle{definition}
\newtheorem{thm}{Theorem}[section]
\newtheorem{cor}[thm]{Corollary}
\newtheorem{prop}[thm]{Proposition}
\newtheorem{lem}[thm]{Lemma}
\newtheorem{defn}[thm]{Definition}

\newtheorem{rmk}[thm]{Remark}

\newcommand{\mr}{\mathrm}
\newcommand{\bb}{\mathbb}
\newcommand{\mc}{\mathcal}
\newcommand{\mf}{\mathfrak}
\newcommand{\Z}{\bb Z}
\newcommand{\Q}{\bb Q}
\newcommand{\F}{\bb F}

\newcommand{\bP}{\bb P}
\newcommand{\bE}{\bb E}
\newcommand{\ve}{\varepsilon}
\newcommand{\wtil}{\widetilde}
\newcommand{\sq}{\mr{sq}}
\newcommand{\nsq}{\mr{nsq}}

\newcommand{\GL}{\mr{GL}}
\newcommand{\SL}{\mr{SL}}
\newcommand{\mO}{\mr{O}}
\newcommand{\SO}{\mr{SO}}
\newcommand{\M}{\mr M}
\newcommand{\cok}{\mr{cok}}
\newcommand{\rank}{\mr{rank}}
\newcommand{\im}{\mr{im}}
\newcommand{\Hom}{\mr{Hom}}
\newcommand{\Sur}{\mr{Sur}}
\newcommand{\Aut}{\mr{Aut}}
\newcommand{\End}{\mr{End}}
\newcommand{\Sym}{\mr{Sym}}
\newcommand{\Alt}{\mr{Alt}}
\newcommand{\GSp}{\mr{GSp}}
\newcommand{\Sp}{\mr{Sp}}
\newcommand{\Stab}{\mr{Stab}}
\newcommand{\Fix}{\mr{Fix}}
\newcommand{\wt}{\mr{wt}}
\newcommand{\tr}{\mr{tr}}

\newcommand{\tors}{\mr{tors}}
\newcommand{\dist}{\xrightarrow{\mr{dist}}}
\newcommand{\angles}[1]{\left\langle #1\right\rangle}

\title[Linearization of $p$-adic orthogonal matrices]
{Cokernels of random $p$-adic orthogonal matrices and their linearizations}
\author{Jungin Lee and Myungjun Yu}
\date{}
\address{J. Lee -- Department of Mathematics, Ajou University, Suwon 16499, Republic of Korea \newline M. Yu -- Department of Mathematics, Yonsei University, Seoul 03722, Republic of Korea}
\email{jileemath@ajou.ac.kr, mjyu@yonsei.ac.kr}

\begin{document}

\begin{abstract}
We study the linearization of the random $p$-adic orthogonal matrix model. Let $A_n$ and $B_n$ be Haar-random $n\times n$ special orthogonal and alternating matrices over $\mathbb{Z}_p$, respectively. For an odd prime $p$, we prove that $\mathrm{cok}(A_n-I_n)$ and $\mathrm{cok}(B_n)$ have the same limiting distribution as $n \to \infty$ through integers of the same parity. The two parity-dependent limits are interchanged for orthogonal matrices of determinant $-1$, while the full orthogonal group gives their equal mixture. For $p=2$, linearization does not preserve the limiting cokernel distribution, and we explicitly determine the limiting distributions for the special orthogonal model in terms of the spinor norm, which controls the parity of the $2$-adic valuation of the order of the torsion subgroup.
\end{abstract}

\maketitle

\vspace{-5mm}
\section{Introduction}\label{Sec1}

For a prime $p$, let $\Z_p$ and $\Q_p$ denote the ring of $p$-adic integers and the field of $p$-adic numbers, respectively. For a positive integer $m$, let $v_p(m)$ be the exponent of $p$ in $m$, and extend $v_p$ to $\Q^\times$ and $\Q_p^\times$ in the usual way. For a commutative ring $R$, write $\M_{m \times n}(R)$ for the set of $m \times n$ matrices over $R$, set $\M_n(R):=\M_{n \times n}(R)$, and write $\GL_n(R)$ for the set of invertible $n\times n$ matrices over $R$. Denote by $I_n$ the $n \times n$ identity matrix, and the transpose of a matrix $A$ by $A^T$. For $A \in \M_n(R)$, regard $A$ as an $R$-linear map $R^n\to R^n$ ($x \mapsto Ax$) and define the \emph{cokernel} of $A$ by $\cok(A) := R^n/\mr{im}(A)$.

\subsection{Linearization of random matrix models} \label{Sub11}

Cohen--Lenstra heuristics \cite{CL84} predict the distribution of the $\ell$-primary part of the class group of imaginary quadratic fields for an odd prime $\ell$. As a function-field analogue, Friedman and Washington \cite{FW89} studied the distribution of the $\ell$-primary part of the degree-zero Picard group $\mr{Pic}^0(C)$, where $C$ ranges over smooth projective hyperelliptic curves of genus $g$ over a finite field $\F_q$ and $\ell \nmid q$. Let $J_C$ be the Jacobian of $C$, so that $\mr{Pic}^0(C)\cong J_C(\F_q)$, and let $F$ denote the $q$-Frobenius action on the $\ell$-adic Tate module $T_\ell J_C\cong\Z_\ell^{2g}$. Then
\begin{equation} \label{eq1a}
\mr{Pic}^0(C)_\ell \cong \cok(I_{2g}-F).
\end{equation}
After choosing a $\Z_\ell$-basis of the Tate module, we may regard $F$ as an element of $\M_{2g}(\Z_\ell)$. In fact, with respect to the Weil pairing on this $\ell$-adic Tate module, $F$ is a symplectic similitude with multiplier $q$. 
Motivated by this description, Friedman and Washington considered $\cok(I_{2g}-L)$ for a Haar-random matrix $L\in\GL_{2g}(\Z_\ell)$. They also studied $\cok(M)$ for a Haar-random matrix $M\in \M_{2g}(\Z_\ell)$ and showed that the cokernel distributions in both models converge to the Cohen--Lenstra distribution as $g\to\infty$.

The limiting distribution in this arithmetic setting depends on $e=v_\ell(q-1)$. When $e=0$, Ellenberg, Venkatesh and Westerland \cite[Theorem 1.2]{EVW16} proved that the upper and lower densities of the distribution of $\mr{Pic}^0(C)_\ell$ (as $g\to\infty$) both converge to the Cohen--Lenstra distribution as $q \to \infty$.
When $e=1$ or $2$, Garton determined the limiting distribution of $\mr{Pic}^0(C_f)_\ell$, where $C_f$ is the hyperelliptic curve $y^2=f(x)$ and $f$ ranges over the monic squarefree polynomials of degree $2g+1$ in $\F_q[x]$, with $q\to\infty$ subject to $v_{\ell}(q-1)=e$ followed by $g\to\infty$ \cite[Theorem 1.2.4]{Gar15}. He also computed the moments of the corresponding random matrix distribution for every fixed $e \ge 1$ \cite[Corollary 3.2.7]{Gar15}.
Lipnowski, Sawin, and Tsimerman \cite[Theorem 1.1]{LST20} determined the limiting distribution of $\mr{Pic}^0(C)_\ell$ as $C$ ranges over smooth projective hyperelliptic curves of genus $g_i$ over $\F_{q_i}$, for any sequences $g_i,q_i\to\infty$ with each $q_i$ an odd prime power satisfying $v_\ell(q_i-1)=e$ for a fixed $e \ge 1$.

Subsequent work has extended these results in several directions. Sawin and Wood obtained $q\to\infty$ results for the distribution of class groups of general $\Gamma$-extensions in the presence of roots of unity \cite[Corollary 8.5]{SW26a}, and more recently studied maximal unramified extensions of $\Gamma$-extensions of $\F_q(t)$ \cite[Theorem 1.4]{SW26b}. For each fixed target group, Landesman and Levy computed Cohen--Lenstra moments for quadratic function fields over fixed sufficiently large $q$ \cite[Theorem 1.2.1]{LL24}, and extended this to nonabelian Cohen--Lenstra--Martinet moments for general $\Gamma$-extensions \cite[Theorem 1.3.2]{LL25}.

The nonlinear random matrix model studied in \cite{LST20} is given as follows. Write
$$
J_{2g}=\begin{pmatrix}0&I_g\\-I_g&0\end{pmatrix} \in \M_{2g}(\Z_\ell)
$$
and take a random matrix
$$
F\in\GSp_{2g}^{(q)}(\Z_\ell):=\{A\in\GL_{2g}(\Z_\ell) : A^T J_{2g}A=qJ_{2g}\},
$$
where $\GSp_{2g}^{(q)}(\Z_\ell)$ is equipped with the probability measure obtained by translating Haar probability measure on the symplectic group $\Sp_{2g}(\Z_\ell)$.
By \eqref{eq1a}, the cokernel of $I_{2g}-F$ models the $\ell$-primary groups $\mr{Pic}^0(C)_\ell$. The formal logarithm suggests a \emph{linearization} of this nonlinear model, replacing the random element $F$ by a random element of an affine translate of the symplectic Lie algebra
$$
\mf{sp}_{2g}(\Z_\ell) :=\{M\in\M_{2g}(\Z_\ell):M^{T}J_{2g}+J_{2g}M=0\}.
$$
Accordingly, the linear random matrix model of \cite{LST20} takes a Haar-random $M\in\mf{sp}_{2g}(\Z_\ell)$ and considers the cokernel of $M+\ell^e I_{2g}$.

Lipnowski, Sawin, and Tsimerman computed the limiting moments of the triple $(G,\omega_G,\psi_G)$ associated with the linear model $G=\cok(M+\ell^e I_{2g})$ \cite[Theorem 8.4]{LST20} and proved that its distribution converges to a limiting measure $\mu$ \cite[Theorem 8.7]{LST20}. Here $e=v_\ell(q-1)\ge1$ is fixed. After that, they showed that the corresponding triple $(G,\omega_G,\psi_G)$ associated with the nonlinear model $G=\cok(I_{2g}-F)$ has the same limiting moments \cite[Lemma 8.10]{LST20}, and the argument in \cite[Section 8.5]{LST20} gives convergence to the same limiting measure $\mu$ as $g\to\infty$. After forgetting the additional invariants $\omega_G$ and $\psi_G$, the two cokernels have the same limiting distribution.
Thus, linearization preserves the limiting cokernel distribution in this symplectic-similitude setting. The purpose of this paper is to establish an analogous result for the orthogonal group.

\subsection{Notation} \label{Sub12}

Let $\Alt_n(\Z_p)$ be the set of $n \times n$ alternating matrices over $\Z_p$, i.e.
$$
\Alt_n(\Z_p)=\{B\in\M_n(\Z_p):B^T=-B \}.
$$
Define the \emph{orthogonal group} and \emph{special orthogonal group} over $\Z_p$ by
$$
\mO_n(\Z_p)=\{A\in\GL_n(\Z_p):A^{T}A=I_n\},\quad \SO_n(\Z_p)=\ker(\det:\mO_n(\Z_p)\to\{\pm1\}).
$$
Also define the determinant $-1$ component by
$$
\mO_n^-(\Z_p)=\{A\in\mO_n(\Z_p):\det A=-1\},
$$
which satisfies $\mO_n(\Z_p)=\SO_n(\Z_p)\sqcup\mO_n^-(\Z_p)$. 

Set $L_n=\Z_p^n$ and write $e_1,\ldots,e_n$ for the standard basis of $L_n$. Regard $L_n$ as the standard symmetric $\Z_p$-lattice via $\langle \cdot, \, \cdot \rangle : L_n \times L_n \to \Z_p$, where $\langle x,y\rangle=x^{T}y$. More generally, a symmetric $\Z_p$-lattice is a finite free $\Z_p$-module $L$ equipped with a symmetric bilinear form. Such a lattice $L$ is \emph{unimodular} if the adjoint map
$$
L\rightarrow\Hom_{\Z_p}(L,\Z_p),\quad x\mapsto\langle x,-\rangle,
$$
is an isomorphism. For a unimodular symmetric $\Z_p$-lattice $L$, write $\mO(L)$ for its isometry group and $\SO(L)$ for the kernel of the determinant map $\det:\mO(L)\to\{\pm1\}$.

For a finitely generated $\Z_p$-module $M$, write
$$
\rank_{\Z_p}M=\dim_{\Q_p}(M\otimes_{\Z_p}\Q_p),\quad
M_{\tors}=\{x\in M:p^mx=0\text{ for some }m\ge1\}.
$$
A vector $v$ in a free $\Z_p$-module $L$ is \emph{primitive} if $v\notin pL$. Every finite abelian $p$-group is regarded as a finite $\Z_p$-module through its natural $\Z_p$-module structure. For abelian groups $G$ and $H$, write $\Hom(G,H)$ (resp. $\Sur(G,H)$) for the set of homomorphisms (resp. surjective homomorphisms) from $G$ to $H$, and write $\Aut(G)$ for the automorphism group of $G$. 
Let $H$ be a finite abelian $p$-group and $m$ be a positive integer. Set
$$
d(H)=\dim_{\F_p}(H/pH),\quad H[m]=\{h\in H:mh=0\}.
$$

If $H\cong\bigoplus_{i=1}^{r}\Z/p^{\lambda_i}\Z$ ($r=d(H)$) with $\lambda_1\ge\cdots\ge\lambda_r\ge1$, then $\lambda=(\lambda_1,\ldots,\lambda_r)$ is called the \emph{type} of $H$.
For a finite abelian $p$-group $H$, write $\Sym^2 H$ for the \emph{second symmetric power} of $H$:
$$
\Sym^2H=(H\otimes_{\Z}H)/\angles{x\otimes y-y\otimes x:x,y\in H}.
$$
We write $xy$ for the class of $x\otimes y$ in $\Sym^2 H$ and $x^2$ for $xx$. If $H$ has type $\lambda=(\lambda_1,\ldots,\lambda_r)$ and
$$
H\cong\bigoplus_{i=1}^r\Z/p^{\lambda_i}\Z\,h_i \quad (h_1, \ldots, h_r \in H),
$$
then
\begin{equation}\label{eq1b}
\Sym^2H\cong \bigoplus_{1\le i\le j\le r} \Z/p^{\min(\lambda_i,\lambda_j)}\Z\,h_ih_j
\end{equation}
so we have
$$
|\Sym^2H|
=p^{\sum_{1\le i\le j\le r}\min(\lambda_i,\lambda_j)}
=p^{\lambda_1+2\lambda_2+\cdots+r\lambda_r}.
$$

A finite abelian $p$-group $M$ is called \emph{square} if $M\cong G\oplus G$ for some finite abelian $p$-group $G$. For a bilinear pairing $\lambda:M\times M\to\Q_p/\Z_p$, \emph{skew-symmetric} means $\lambda(x,y)=-\lambda(y,x)$, \emph{alternating} means $\lambda(x,x)=0$ for every $x$, and \emph{perfect} means that
$$
M\rightarrow\Hom(M,\Q_p/\Z_p),\quad x\mapsto\lambda(x,-),
$$
is an isomorphism. The group $M$ is called \emph{symplectic} if it admits a perfect alternating pairing. Such a pairing exists if and only if $M$ is square \cite[Proposition 2]{Del01}. For a symplectic $M$, write $\Sp(M)$ for the automorphism group preserving a fixed perfect alternating pairing. In particular, when $p$ is odd, the symplectic finite abelian $p$-groups are precisely the square ones, and every skew-symmetric pairing is alternating.

Next we provide some notation used only in the case $p=2$. For a finite abelian $2$-group $H$, write $t(H)$ for the number of direct summands isomorphic to $\Z/2\Z$ in its elementary-divisor decomposition; equivalently,
$$
t(H)=\dim_{\F_2}(H[2]/(H[2]\cap 2H)).
$$
We write $\Gamma(H)$ for Whitehead's universal quadratic group; see Section \ref{Sub61} for its properties. 
For $\delta\in\{0,1\}$, set
\begin{align*}
\mc S_{\sq}^{(\delta)} &=\{\Z_2^\delta\oplus G\oplus G:
G\text{ is a finite abelian }2\text{-group}\}, \\
\mc S_{\nsq}^{(\delta)} &=\{\Z_2^\delta\oplus\Z/2\Z\oplus G\oplus G: G\text{ is a finite abelian }2\text{-group}\}.
\end{align*}
In both definitions, the elements are understood up to isomorphism.

Finally, we record the probabilistic notation used throughout the paper. For a random variable $X$ and an event $E$, write $\bE(X)$ for the expected value of $X$, $\bP(E)$ for the probability of $E$, and $\mathbf{1}_E$ for its indicator. If $Y$ is a discrete random variable and $\bP(Y=y)>0$, write $\bE(X\mid Y=y)$ and $\bP(E\mid Y=y)$ for the corresponding conditional expected value and conditional probability. Unless otherwise specified, a random element of a compact group is distributed according to its Haar probability measure. By Haar measure on $\mO_n^-(\Z_p)$, we mean the normalized restriction of the Haar measure on $\mO_n(\Z_p)$. For a random finitely generated $\Z_p$-module $X$ and a finite abelian $p$-group $H$, the \emph{$H$-moment} of $X$ is the expected value
$$
\bE(\#\Sur(X,H)).
$$

For a sequence of random finitely generated $\Z_p$-modules $(X_n)_{n\ge1}$ and a probability law $\mu$ on the set of isomorphism classes of finitely generated $\Z_p$-modules, we write $X_n\dist\mu$ if
$$
\lim_{n \to \infty}\bP(X_n\cong H)=\mu(H)
$$
for every finitely generated $\Z_p$-module $H$. On the countable discrete space of isomorphism classes of finitely generated $\Z_p$-modules, this pointwise convergence is equivalent to weak convergence of the distributions of $X_n$ to $\mu$. We call a moment formula or orbit classification \emph{stable} if, for each fixed finite abelian $p$-group $H$, it holds for all sufficiently large $n$ depending on $H$.

\subsection{Main results} \label{Sub13}

\begin{thm}\label{thm1a}
Let $H$ be a finite abelian $p$-group.
\begin{enumerate}
\item Let $p$ be odd, and let $A_n$ be a Haar-random matrix in either $\mO_n(\Z_p)$, $\SO_n(\Z_p)$ or $\mO_n^{-}(\Z_p)$. Then for all sufficiently large $n$ (depending on $H$),
$$
\bE(\#\Sur(\cok(A_n-I_n),H))=|\Sym^2H|.
$$

\item Let $p=2$, and let $A_n$ be a Haar-random matrix in either $\mO_n(\Z_p)$, $\SO_n(\Z_p)$ or $\mO_n^{-}(\Z_p)$. Then for all sufficiently large $n$ (depending on $H$),
$$
\bE(\#\Sur(\cok(A_n-I_n),H))=|\Gamma(H)|\left(\frac{3}{2}-2^{-t(H)-2}-2^{-d(H)-2}\right).
$$
\end{enumerate}
\end{thm}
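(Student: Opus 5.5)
Since $\cok(A_n-I_n)$ surjects onto $H$ exactly when some surjection $\phi\colon\Z_p^n\to H$ kills $\mathrm{im}(A_n-I_n)$, I will write
$$
\bE(\#\Sur(\cok(A_n-I_n),H))=\sum_{\phi\in\Sur(L_n,H)}\bP(\phi\circ A_n=\phi).
$$
Two reductions come first. Since $H$ is killed by $p^k$, the condition $\phi A_n=\phi$ depends only on $A_n$ modulo $p^k$. Haar measure on $\mO_n(\Z_p)$ (or on $\SO_n$, $\mO_n^-$) pushes forward to the uniform measure on the image of the reduction map, by smoothness and Hensel's lemma, at least for odd $p$. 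For $p=2$ I will instead work with the image group directly.

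The count is then orbit-theoretic. The group $G=\mO_n(\Z/p^k)$ acts on $\Sur(L_n,H)$ by $\phi\mapsto\phi\circ g$, and the probability that a random $A$ fixes $\phi$ is $|\Stab(\phi)|/|G|$. So
$$
\sum_\phi \bP(\phi A=\phi)=\sum_\phi \frac{|\Stab\phi|}{|G|}=\#\{\text{orbits}\}
$$
by Burnside's lemma (orbit–stabilizer). For the coset $\mO_n^-$, one has instead $\bP(\phi A=\phi)=|\{g\in\mO^-:\phi g=\phi\}|/|\mO^-|$. This equals $|\Stab(\phi)|/|\SO|$ whenever the stabilizer meets $\mO^-$, and $0$ otherwise. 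The main step is therefore to classify, stably in $n$, the orbits of $\SO_n$ and $\mO_n$ on $\Sur(L_n,H)$, and to show the stabilizers contain reflections of every determinant.

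To classify the orbits, I dualize. A surjection $\phi$ corresponds to an injection $\phi^\vee\colon \hat H\to (L_n/p^k)$, and via the form $\langle\cdot,\cdot\rangle$ the kernel $\ker\phi$ is the orthogonal complement of a submodule $N\cong \hat H$. The natural invariant is the restricted symmetric form on $N$, namely the symmetric pairing $\hat H\times\hat H\to \Z/p^k$, viewed as an element of the dual of $\Sym^2$. For odd $p$, there are $|\Sym^2 H|$ such forms, counted as $\Hom(\Sym^2 \hat H,\Q_p/\Z_p)$. A Witt-extension-type theorem over $\Z/p^k$ for large $n$ should then show two things. First, each symmetric form is realized by some embedding, because $L_n$ contains hyperbolic planes for $n$ large. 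Second, any two embeddings with the same form are conjugate under $\mO_n$, and even under $\SO_n$, since the orthogonal complement of $N$ has large rank and so contains a unit-norm vector whose reflection fixes $N$. That reflection puts elements of both determinants into each stabilizer, so the orbit count is the same for all three models and equals $|\Sym^2H|$. This proves (1).

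For $p=2$ the invariant must be refined to a quadratic refinement: the values $\langle x,x\rangle$ modulo $2^{k+1}$ give an element of the dual of Whitehead's $\Gamma(\hat H)$. However, the standard lattice $\Z_2^n$ is odd, and reduction mod $2^k$ need not be surjective onto the finite orthogonal group. Here I would compute the moment as $\sum_\phi \bP(\phi A=\phi)$ by stratifying over the quadratic invariant $q\in\Gamma(H)^\vee$. For each $q$, I would use the 2-adic Witt theorem (the Trojan–O'Meara classification) to decide when embeddings with that invariant are single orbits, and to determine whether their stabilizers contain elements of determinant $-1$ or of non-trivial spinor norm. The correction factor $\tfrac32-2^{-t(H)-2}-2^{-d(H)-2}$ should come from counting the $q$ for which the image of $N$ is forced to be even or characteristic, so that the complement behaves differently. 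The terms $2^{-d(H)}$ and $2^{-t(H)}$ arise from conditions that $q$ vanish mod $2$ on $H/2H$, respectively on the $\Z/2$ summands. This $p=2$ orbit/stabilizer analysis, the lack of a clean Witt theorem for odd unimodular 2-adic lattices, is the main obstacle. I would first try to handle it by embedding $L_n$ into $L_{n}\oplus$(hyperbolic) and comparing against an even lattice.
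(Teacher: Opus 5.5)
Your part (a) follows the paper's route in outline: the moment equals the number of orbits by Burnside, and a reflection $r_v$ in a unit-norm vector $v\in\ker\phi$ merges $\mO_n$- and $\SO_n$-orbits and handles $\mO_n^-$. However, the invariant you name is not complete. The form restricted to $N=\phi^\vee(\hat H)\subset L_n/p^kL_n$ sees only $p^{2k-\lambda_i-\lambda_j}\langle u_i,u_j\rangle \bmod p^k$. That is, it records $\langle u_i,u_j\rangle$ modulo $p^{\lambda_i+\lambda_j-k}$ rather than modulo $p^{\min(\lambda_i,\lambda_j)}$.

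For example, take $H=\Z/p^2\oplus\Z/p$ and $k=2$. Then the restricted form forgets $\langle u_2,u_2\rangle \bmod p$ and takes only $p^3<|\Sym^2H|=p^4$ values, so surjections in different orbits share it. Correspondingly, Witt's theorem over $\Z/p^k$ fails for non-free $N$. The submodules generated by $pu$ and $pu'$ in $(\Z/p^2)^n$ are both $\Z/p$ with zero form. Yet an isometry sending $pu$ to $pu'$ forces $gu\equiv u'\pmod p$, hence $\langle u,u\rangle\equiv\langle u',u'\rangle\pmod p$.

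The complete invariant is the covariant one, $\Theta_n(f)=\sum_k f(e_k)^2\in\Sym^2H$. The extension step must then be done over $\Z_p$ in two stages. First move each frame vector within $u_i+p^{\lambda_i}L_n$ so the Gram matrices become exactly equal; this is the Hensel argument of Lemma 4.4. Then extend the isometry between the primitive sublattices spanned by the frames.

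Part (b) is a genuine gap: you give no orbit classification, no count, and no determinant $-1$ stabilizer elements. Your guess about where the correction terms come from is also off. Since $\tfrac32-2^{-t(H)-2}-2^{-d(H)-2}\ge\tfrac98$ for $H\ne0$, there are more orbits than elements of $\Gamma(H)$. So $q_f=\sum_k\gamma(f(e_k))$ cannot be complete, and the correction cannot come from singling out special values of $q$.

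The missing datum is whether $f$ is characteristic, i.e. whether $\bar c_n=(1,\dots,1)$ lies in $\im\bar f^*$. If it is, one also records the functional $\chi_f\in\Hom(H,\F_2)$ with $\chi_f(f(x))\equiv\langle x,x\rangle\pmod 2$. For large $n$ the count then goes as follows.
\begin{itemize}
\item Noncharacteristic orbits realize every $q\in\Gamma(H)$.
\item For each $\chi\ne0$, the characteristic orbits with $\chi_f=\chi$ fill a coset of $D_\chi=\angles{\gamma(x)-\gamma(y):\chi(x)=\chi(y)=1}$.
\item This coset has index $2^{d(H)+1}$ in $\Gamma(H)$ if $\chi|_{H[2]}\ne0$, and index $2^{d(H)+2}$ if $\chi|_{H[2]}=0$.
\end{itemize}
Summing over $\chi$ gives the stated constant.

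The extension theorem you feared was missing does exist. James's theorem says an isometry between primitive sublattices of $\Z_2^n$ extends if and only if it preserves which primitive vectors reduce to $\bar c_n$. The preceding Gram correction needs the characteristic data in the case $\lambda_j=1$, using that vectors reducing to $\bar c_n$ have norm $\equiv n\pmod 8$.

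Finally, for $\mO_n^-(\Z_2)$ the odd-$p$ reflection argument breaks: for characteristic $f$, every vector of $\ker f$ has even norm. One needs instead $v\in\ker f$ with $\langle v,v\rangle\not\equiv0\pmod 4$. Such $v$ exists for $n>2d(H)$, because a subspace of $\F_2^n$ of dimension $>n/2$ contains a vector of weight $\not\equiv0\pmod 4$. The spinor norm plays no role in this theorem.
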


Nguyen and Wood \cite[Theorem 3.1]{NW25} proved that if $B_n$ is Haar-random in $\Alt_n(\Z_p)$, then the limiting $H$-moment of $\cok(B_n)$ is also $|\Sym^2 H|$. Thus, for odd $p$, Theorem \ref{thm1a}(a) shows that the orthogonal and alternating models have the same limiting $H$-moments.

For a prime $p$ and $\delta \in \{0,1\}$, set
$$
c_{p,\delta}=\prod_{i=\delta}^{\infty}(1-p^{-(2i+1)}).
$$
When $p$ is odd, define probability laws $\mu_{p,0}$ and $\mu_{p,1}$ on the set of isomorphism classes of finitely generated $\Z_p$-modules by
$$
\mu_{p,0}(M)=\begin{cases}
c_{p,0}|M|/|\Sp(M)|,&M\text{ is symplectic},\\
0,&\text{otherwise},
\end{cases}
$$
and
$$
\mu_{p,1}(M)=\begin{cases}
c_{p,1}/|\Sp(G)|,& M\cong \Z_p\oplus G\text{ for some symplectic }p\text{-group }G,\\
0,&\text{otherwise}.
\end{cases}
$$
For a Haar-random matrix $B_n\in\Alt_n(\Z_p)$, Bhargava, Kane, Lenstra, Poonen and Rains \cite[Theorems 3.9 and 3.11]{BKLPR15} proved that as $g \to \infty$,
$$
\cok(B_{2g})\dist\mu_{p,0},\quad
\cok(B_{2g+1})\dist\mu_{p,1}.
$$
We note that for the odd-dimensional case, $\cok(B_{2g+1})$ has $\Z_p$-rank $1$ almost surely by \cite[Proposition 2.1(b)]{BKLPR15}.
The next theorem shows that the orthogonal model has the same parity-dependent limiting laws, with the two parities interchanged on the negative determinant component.

\begin{thm}\label{thm1b}
Let $p$ be an odd prime.
\begin{enumerate}
\item Let $A_n$ be a Haar-random matrix in $\SO_n(\Z_p)$. Then as $g \to \infty$,
$$
\cok(A_{2g}-I_{2g})\dist\mu_{p,0},\quad
\cok(A_{2g+1}-I_{2g+1})\dist\mu_{p,1}.
$$

\item Let $A_n$ be a Haar-random matrix in $\mO_n^-(\Z_p)$. Then as $g \to \infty$,
$$
\cok(A_{2g}-I_{2g})\dist\mu_{p,1},\quad
\cok(A_{2g+1}-I_{2g+1})\dist\mu_{p,0}. 
$$

\item Let $A_n$ be a Haar-random matrix in $\mO_n(\Z_p)$. Then as $n \to \infty$,
$$
\cok(A_n-I_n)\dist\frac{1}{2}\mu_{p,0}+\frac{1}{2}\mu_{p,1}.
$$
\end{enumerate}
\end{thm}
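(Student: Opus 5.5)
Our plan is to derive Theorem \ref{thm1b} from the moment computation in Theorem \ref{thm1a}(a), together with a structural parity constraint that fixes the $\Z_p$-rank of $\cok(A_n-I_n)$. Then we use moment-determinacy to pin down the limit law.

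The first step is to determine the rank. For $A\in\mO_n(\Q_p)$ with $\det A=\varepsilon$, the eigenvalue $1$ has generic multiplicity forced by parity. Eigenvalues other than $\pm1$ pair as $\lambda,\lambda^{-1}$, so $\det A=(-1)^{m_{-1}}$ and $n\equiv m_1+m_{-1}\pmod 2$. Hence $m_1\equiv n+\delta\pmod 2$, where $\varepsilon=(-1)^{\delta}$. Thus for $\SO_{2g+1}$ and $\mO^-_{2g}$ the matrix $A-I$ is always singular. The locus where $\dim\ker(A-I)\ge 2$, or $\ge1$ in the opposite parity case, is a proper Zariski-closed subset of the relevant component. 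It therefore has Haar measure zero, since $\SO_n(\Z_p)$ is Zariski dense in $\SO_n$ and its Haar measure is absolutely continuous on the $p$-adic analytic manifold. So almost surely $\rank_{\Z_p}\cok(A_n-I_n)=\delta_n\in\{0,1\}$, with $\delta_n\equiv n+\delta$. Moreover, $\ker(A-I)$ is a rank-$\delta_n$ saturated sublattice, so $\cok(A_n-I_n)\cong\Z_p^{\delta_n}\oplus T_n$ with $T_n$ finite.

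The second step is to show that $T_n$ is square, i.e.\ that it carries a perfect alternating pairing. On the image of $A-I$ define $\langle (A-I)x,(A-I)y\rangle':=\langle (A-I)x, y\rangle$. The relation $A^T=A^{-1}$ shows that $(A-I)^T=-A^{-1}(A-I)$. Consequently the matrix $S=A^{-1}(A-I)$ satisfies $S+S^T=-(A-I)^TA\cdots$, which is a Cayley-type identity. More concretely, on the torsion part we use the pairing $T\times T\to\Q_p/\Z_p$, $(x,y)\mapsto \langle x,(A-I)^{-1}y\rangle$ evaluated on lifts. Skew-symmetry mod $\Z_p$ comes from $(A-I)^{-1}+(A^{-1}-I)^{-1}=-I$, since $A^{-1}-I=-A^{-1}(A-I)$. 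Since $p$ is odd, skew-symmetric pairings are alternating. We still need to verify perfectness, which is the standard duality argument for $\cok$ of a nondegenerate-on-torsion matrix. In the rank-one case we first restrict to the complement of the kernel line, which is a unimodular lattice when the kernel is generated by a vector of unit norm. This holds generically, and we handle the exceptional case as another measure-zero or small-probability event. We expect this step to be the main technical obstacle, particularly the rank-one case, where the pairing must be built on $T_n$ rather than on the whole cokernel.

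The third step identifies the limit. For each parity class, the $H$-moments of $\cok(A_n-I_n)$ equal $|\Sym^2H|$ for large $n$ by Theorem \ref{thm1a}(a). These are exactly the limiting moments of $\cok(B_n)$ for the corresponding parity, by \cite[Theorem 3.1]{NW25}, and they grow at most like $|\wedge^2H|\cdot|H|$. By the Wood-type moment-uniqueness theorem, restricted to the class of modules $\Z_p^{\delta}\oplus(\text{square})$ with fixed $\delta$, any subsequential limit must coincide with $\mu_{p,\delta}$. Tightness follows from the bounded $H$-moments, e.g.\ for $H=(\Z/p\Z)^k$. Since the support is constrained by the first two steps, the moments determine the law even though $|\Sym^2H|$ is slightly beyond the usual $|\wedge^2H|$ bound. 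Concretely, we expect to use the version of moment-uniqueness used by BKLPR/Nguyen--Wood for alternating matrices: for fixed $\delta$, the map $\Z_p^{\delta}\oplus G\oplus G\mapsto G$ turns the moments into moments of $G$ that are well-behaved. This yields (a) and (b) according to whether $n+\delta$ is even or odd. Part (c) follows since $\mO_n$ is the equal mixture of $\SO_n$ and $\mO_n^-$ under Haar measure, and for each $n$ one component gives $\mu_{p,0}$ and the other gives $\mu_{p,1}$.
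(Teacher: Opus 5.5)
Your overall architecture matches the paper's. You get almost-sure rank $0$ or $1$ from proper Zariski-closed loci; properness still needs witnesses such as $-I_n$ and $\mathrm{diag}(1,-1,\dots,-1)$, but that is easy. You then use a perfect skew-symmetric, hence alternating, pairing on the torsion to force it to be square, apply the Nguyen--Wood moment theorem on the constrained support, and obtain (c) as the equal mixture.

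\textbf{The gap is Step 2 in the rank-one case} ($\SO_{2g+1}$ and $\mO_{2g}^-$). The $\mu_{p,1}$ limits, and hence part (c), rest entirely on this case. Your pairing $\langle x,(A-I)^{-1}y\rangle$ and the identity $(A-I)^{-1}+(A^{-1}-I)^{-1}=-I$ both require $A-I$ to be invertible. Your workaround is to split $L=\Z_p v\perp(v^\perp\cap L)$ for the primitive kernel vector $v$, which needs $\langle v,v\rangle\in\Z_p^\times$, and to discard the remaining event as measure-zero or small. That does not work, for three reasons.
\begin{itemize}
\item The event $p\mid\langle v,v\rangle$ is a nonempty open subset of the group, so it has positive Haar measure. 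For instance, it contains $r_ar_b\oplus(-I_{n-3})$ for suitable unit-norm $a,b\in\Z_p^3$ whose reductions span $\bar v_0^\perp$, where $\bar v_0\in\F_p^3$ is isotropic.
\item It does not become negligible as $n\to\infty$. Whenever the eigenvalue-$1$ part of the reduction $\bar A_n$ is a single Jordan block of size at least $3$, one has $\ker(\bar A_n-I)=\F_p\bar v\subseteq\im(\bar A_n-I)=\bar v^\perp$, so $\bar v$ is isotropic. This happens with probability bounded away from $0$.
\item The moment-determinacy step needs the square-torsion constraint almost surely. Since $\#\Sur(\cdot,H)$ is unbounded, you cannot simply drop an exceptional event from the moments.
\end{itemize}

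\textbf{The missing idea is that no splitting is needed.} Put $T=A-I$, $V=L\otimes\Q_p$, $K=\ker(T\colon V\to V)$ and $W=T(V)$. Since $T^*=-A^{-1}T$, we get $W=(\ker T^*)^\perp=K^\perp$ and $\cok(T)_{\tors}=(L\cap W)/T(L)$. Define $\lambda([x],[y])=\langle z,y\rangle+\Z_p$ for any $z\in V$ with $Tz=x$.
\begin{itemize}
\item Well-definedness: the choice of $z$ is irrelevant since $y\perp K$. Changing $x$ by $T\ell$ changes the value by $\langle \ell,y\rangle\in\Z_p$. Changing $y$ by $T\ell$ changes it by $\langle T^*z,\ell\rangle=-\langle A^{-1}x,\ell\rangle\in\Z_p$.
\item Skew-symmetry: from $T^*+T=-T^*T$ one gets $\langle z,Tw\rangle+\langle Tz,w\rangle=-\langle Tz,Tw\rangle\in\Z_p$.
\item Perfectness: $L\cap W$ is saturated in $L$, so $y\mapsto\langle z,y\rangle$ extends from $L\cap W$ to $L$. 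By unimodularity of $L$ itself it is represented by some $\ell\in L$. Then $z-\ell\in W^\perp=K$, so $x=T\ell$.
\end{itemize}
This is exactly the paper's Theorem~\ref{thm5a}. It holds for every $A\in\mO(L)$ with no condition on the kernel vector, and with it your Steps 1 and 3 go through as in the paper.
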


For $p=2$, Theorem \ref{thm1a}(b) gives the unconditional stable moments. However, the structure of $\cok(A_n-I_n)$ depends on the parity
$$
\ve(A_n):=v_2(|\cok(A_n-I_n)_{\tors}|) \pmod{2},
$$
and the corresponding limiting distributions also depend on this parity. We refer to Section \ref{Sub91} for the convention of the \emph{spinor norm} $\mathrm{sp}_{-}$ in this paper.

\begin{thm} \label{thm1c}
Let $n \ge 2$, let $A_n$ be a Haar-random matrix in $\SO_n(\Z_2)$ and set $\ve = \ve(A_n)$. 
Then $\ve(A_n)=v_2(\mathrm{sp}_{-}(A_n)) \pmod{2}$, and each of its two values has probability $1/2$. Moreover, for every finite abelian $2$-group $H$ and all sufficiently large $n$,
\begin{equation}\label{eq1c}
\begin{aligned}
\bE(\#\Sur(\cok(A_n-I_n),H)\mid\ve=0)
&=|\Gamma(H)|\left(\frac{3}{2}-2^{-d(H)-1}\right),\\
\bE(\#\Sur(\cok(A_n-I_n),H)\mid\ve=1)
&=|\Gamma(H)|\left(\frac{3}{2}-2^{-t(H)-1}\right).
\end{aligned}
\end{equation}
\end{thm}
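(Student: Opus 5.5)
Three steps: relate $\ve(A_n)$ to the spinor norm, show each parity has probability $1/2$ via a coset argument, and obtain the conditional moments by splitting the orbit computation behind Theorem \ref{thm1a}(b) by spinor-norm class.

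\textbf{Parity and spinor norm.} Let $M=A_n-I_n$ and let $r=\rank_{\Z_2}\cok(M)$, so $|\cok(M)_{\tors}|$ is the product of the nonzero elementary divisors of $M$. Let $W=\ker(M\otimes\Q_2)$ be the fixed space; it is nondegenerate, since the fixed space of an isometry is orthogonal to $\im(M)$. On $W^\perp$ the operator $A-I$ is invertible, and a Zassenhaus-type formula identifies the spinor norm of $A_n$ with the discriminant of the bilinear form $(x,y)\mapsto\langle x,(A-I)^{-1}y\rangle$ (suitably symmetrized) on $W^\perp$, modulo squares. One then compares $2$-adic valuations.
\begin{itemize}
\item The torsion part of $\cok(M)$ is measured by the elementary divisors of $M$ restricted to the saturated lattice $L_n\cap W^\perp$.
\item The unimodularity of $L_n$ controls the discrepancy between $L_n\cap W^\perp$, $L_n\cap W$ and their duals.
\end{itemize}
This comparison should give $v_2(\mathrm{sp}_-(A_n))\equiv v_2(\det(M|_{W^\perp}))\equiv v_2(|\cok(M)_{\tors}|) \pmod 2$, up to a correction that is even. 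The paper's convention for $\mathrm{sp}_-$ in Section \ref{Sub91} is presumably chosen so that no further twist is needed. I would first check this on reflections and products of two reflections, where $\cok(M)$ is computed directly, and then argue in general using the lattice/discriminant comparison above.

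\textbf{Equidistribution of the parity.} The map $A\mapsto v_2(\mathrm{sp}_-(A)) \bmod 2$ is a homomorphism $\SO_n(\Z_2)\to\Z/2$. For $n\ge2$ it is surjective: take the product of reflections in $e_1$ and $e_1+e_2$, whose spinor norm is $1\cdot2$ up to squares and hence has odd valuation. Its kernel therefore has index $2$, so under Haar measure each parity occurs with probability exactly $1/2$.

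\textbf{Conditional moments.} Write $\bE(\#\Sur(\cok M,H)\mid\ve)$ as $2\int_{\text{coset}}$ of the count of surjections. Following the approach behind Theorem \ref{thm1a}(b), I would count pairs $(A,F)$ with $F\in\Sur(L_n,H)$ and $F\circ(A-I)=0$. By transitivity this reduces to summing, over $\mO$-orbits of surjections $F$, the Haar measure of $\{A\in\text{coset}: FA=F\}$. The main idea is to refine the orbit classification by the spinor norm. For each orbit type, which is governed by the quadratic/symmetric data on $H$ via $\Gamma(H)$, the stabilizer-type subgroup $\{A:FA=F\}$ either meets both spinor cosets equally or lies entirely in one coset. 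The conditional moment is therefore the total moment restricted to even or odd spinor norm. The counts that sum to $|\Gamma(H)|(3-2^{-t-1}-2^{-d-1})$ in Theorem \ref{thm1a}(b) should split as
\begin{itemize}
\item $|\Gamma(H)|(\tfrac32-2^{-d-1})$ for even spinor norm, and
\item $|\Gamma(H)|(\tfrac32-2^{-t-1})$ for odd spinor norm,
\end{itemize}
after normalizing by the factor $2$.

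\textbf{Main obstacle.} The hard step is deciding, orbit by orbit, whether the fixator subgroup contains an element of odd spinor norm. This occurs precisely when the orthogonal complement of the "kernel data" still carries a unimodular sublattice of rank at least $2$ that supports such elements, which is automatic for large $n$. The rest reduces to bookkeeping of which orbit types contribute the $2^{-d}$ and $2^{-t}$ terms, identifying the degenerate orbits (those where $F$ restricted to isotropic parts forces a spinor class) as the sources of the asymmetry.
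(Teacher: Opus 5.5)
\textbf{Main gap: the special orbits.} Your equidistribution step is the same as the paper's. Your plan for the moments, a $(-1)^\eta$-weighted Burnside count over orbits, is also the paper's plan. But the step that produces the asymmetry in \eqref{eq1c} is missing, and what you write in its place points the wrong way.

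The weighted count gives conditional moments $T(H)\pm S(H)$. Here $T(H)$ is the stable number of $\SO_n(\Z_2)$-orbits on $\Sur(L_n,H)$, and $S(H)$ is the number of orbits whose stabilizer lies in $\ker\eta$. You assert that an odd-spinor element in the fixator exists ``automatically for large $n$''. If that held for every orbit, each orbit would contribute $0$ to the twisted integral. Both conditional moments would then equal $T(H)=|\Gamma(H)|(\frac32-2^{-t(H)-2}-2^{-d(H)-2})$, contradicting \eqref{eq1c} whenever $t(H)<d(H)$ (e.g.\ $H=\Z/4\Z$).

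The special orbits persist for all $n$. Suppose $f$ is characteristic, i.e.\ $\langle x,x\rangle\equiv\chi_f(f(x))\pmod 2$ for some $\chi_f\in\Hom(H,\F_2)$, and $\chi_f|_{H[2]}=0$. Then:
\begin{enumerate}
\item $\chi_f$ lifts to $\widetilde\chi_f:H\to\Z/4\Z$.
\item The vector $u$ with $\langle u,x\rangle\equiv\widetilde\chi_f(f(x))\pmod 4$ is characteristic.
\item Every $U\in\Stab(f)$ satisfies $Uu\equiv u\pmod{4L_n}$, so $\delta_u(U)=\langle u,(Uu-u)/2\rangle\equiv0$.
\item $\delta_u=\eta$ on $\SO_n(\Z_2)$ (Lemma \ref{lem:characteristic-spinor}).
\end{enumerate}
In every other case one exhibits $r_vr_z\in\Stab(f)$ with $\langle v,v\rangle\in\Z_2^\times$ and $\langle z,z\rangle\equiv2\pmod4$ (Lemma \ref{lem:norm-two-kernel}). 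Combine this with the orbit classification by $(\chi_f,q_f)$ and the index $[\Gamma(H):D_\chi]=2^{d(H)+2}$ for $\chi|_{H[2]}=0$. This yields $S(H)=|\Gamma(H)|(2^{-t(H)-2}-2^{-d(H)-2})$, and hence \eqref{eq1c}. Without this criterion and count, your ``should split as'' just reads the answer off the statement.

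\textbf{The parity identity.} Your first step, via Zassenhaus's formula for the spinor norm, is a genuinely different route and can be made to work. As written, though, it is a plan with false intermediate claims:
\begin{enumerate}
\item The fixed space need not be nondegenerate. A nontrivial Eichler transformation $E_{u,x}$ fixes $u$, and $u\in\im(E_{u,x}-I)$.
\item Symmetrizing the Wall form $\beta(x,y)=\langle z,y\rangle$ (with $Tz=x$) loses everything: by the proof of Theorem \ref{thm5a}, $\beta(x,y)+\beta(y,x)=-\langle x,y\rangle$.
\item The chain $v_2(\mathrm{sp}_-(A))\equiv v_2(\det T|_{W^\perp})\equiv v_2(|\cok(T)_{\tors}|)$ fails at each link, not merely up to an even correction. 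Take $A\in\SO_3(\Z_2)$ swapping $e_1,e_2$ and negating $e_3$. Then $T|_{W^\perp}=-2I$, while $\mathrm{sp}_-(A)=2$ and $\cok(A-I_3)_{\tors}\cong\Z/2\Z$.
\end{enumerate}

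Here is what does work. The perfectness argument of Theorem \ref{thm5a} shows that $T(L)$ is exactly the $\beta$-dual of $\Lambda=L\cap T(V)$, so $v_2(|\cok(T)_{\tors}|)=-v_2(\det\beta|_\Lambda)$. Zassenhaus's theorem, applied to the nonsymmetric form, gives $\det\beta\equiv2^{-m}\,\mathrm{sp}_-(A)$ modulo squares, with $m=\rank T$. Since $m$ is even on $\SO_n$, this proves the parity identity for every $A$.

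The paper instead defines $\delta_w(g)=\langle w,(gw-w)/2\rangle$, checks $\delta_w=d+s$ on the O'Meara--Pollak generators, and reads off the parity from $\kappa=[Tw/2]$. The advantage of that route is that the same $\delta_w$ is exactly the tool needed for the stabilizer criterion above.
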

We now identify the limiting distributions corresponding to the two conditional moment formulas in Theorem \ref{thm1c}. Let $M$ be a finite symplectic $2$-group and $\delta\in\{0,1\}$. Write
$$
M_\delta=\Z_2^\delta\oplus M,\quad
w_\delta(M)=c_{2,\delta}\frac{|M|^{1-\delta}}{|\Sp(M)|}.
$$
Define nonnegative functions $\mu_{\sq}^{(\delta)}$ and $\mu_{\nsq}^{(\delta)}$ on the isomorphism classes of finitely generated $\Z_2$-modules by
\begin{align*}
\mu_{\sq}^{(\delta)}(M_\delta)
&=\frac{\#\Sur(M_\delta,\Z/2\Z)}{2} w_\delta(M),\\
\mu_{\nsq}^{(\delta)}(\Z/2\Z\oplus M_\delta)
&=w_\delta(M),
\end{align*}
with $\mu_{\sq}^{(\delta)}$ and $\mu_{\nsq}^{(\delta)}$ supported on $\mc S_{\sq}^{(\delta)}$ and $\mc S_{\nsq}^{(\delta)}$, respectively. Finally, let
\begin{equation*}
\mu_{2,\delta}
=\frac{1}{2}\mu_{\sq}^{(\delta)}
+\frac{1}{2}\mu_{\nsq}^{(\delta)}.
\end{equation*}

\begin{thm} \label{thm1d}
Let $A_n$ and $\ve=\ve(A_n)$ be as in Theorem \ref{thm1c}, and let $\delta \in \{0,1\}$.
\begin{enumerate}
\item The functions $\mu_{\sq}^{(\delta)}$ and $\mu_{\nsq}^{(\delta)}$ are probability laws on the isomorphism classes of finitely generated $\Z_2$-modules, supported on $\mc S_{\sq}^{(\delta)}$ and $\mc S_{\nsq}^{(\delta)}$, respectively.

\item For every finite symplectic $2$-group $M$,
\begin{align*}
\lim_{\substack{n\to\infty\\ n\equiv\delta \pmod{2}}}
\bP(\cok(A_n-I_n)\cong M_\delta \mid\ve=0)
&=\frac{\#\Sur(M_\delta,\Z/2\Z)}{2}w_{\delta}(M), \\
\lim_{\substack{n\to\infty\\ n\equiv\delta \pmod{2}}}
\bP(\cok(A_n-I_n)\cong \Z/2\Z\oplus M_\delta \mid\ve=1)
&=w_{\delta}(M).
\end{align*}
The conditional limiting probabilities are zero outside $\mc S_{\sq}^{(\delta)}$ and $\mc S_{\nsq}^{(\delta)}$, respectively.

\item For every finite symplectic $2$-group $M$,
\begin{align*}
\lim_{\substack{n\to\infty\\ n\equiv\delta \pmod{2}}}
\bP(\cok(A_n-I_n)\cong M_\delta)
&=\mu_{2,\delta}(M_\delta)=\frac{\#\Sur(M_\delta,\Z/2\Z)}{4} w_\delta(M),\\
\lim_{\substack{n\to\infty\\ n\equiv\delta \pmod{2}}}
\bP(\cok(A_n-I_n)\cong\Z/2\Z\oplus M_\delta)
&=\mu_{2,\delta}(\Z/2\Z\oplus M_\delta)=\frac{1}{2}w_\delta(M).
\end{align*}
The limiting probability is zero outside $\mc S_{\sq}^{(\delta)} \sqcup \mc S_{\nsq}^{(\delta)}$.

\item The probability laws $\mu_{\sq}^{(\delta)}$ and $\mu_{\nsq}^{(\delta)}$ are the unique probability laws on the isomorphism classes of finitely generated $\Z_2$-modules supported on $\mc S_{\sq}^{(\delta)}$ and $\mc S_{\nsq}^{(\delta)}$, respectively, whose $H$-moments are the two right-hand sides of \eqref{eq1c} for every finite abelian $2$-group $H$.
\end{enumerate}
\end{thm}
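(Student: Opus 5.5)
Theorem \ref{thm1d} has four parts, and I would prove them in the order (a), (d), (b), (c): first the pure statements about the laws $\mu_{\sq}^{(\delta)},\mu_{\nsq}^{(\delta)}$, then the probabilistic limits.

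\textbf{Part (a): total mass.} Both laws have an explicit total mass, and I would reduce it to the classical symplectic Cohen--Lenstra sums. For $\mu_{\nsq}^{(\delta)}$ the total mass is $\sum_M w_\delta(M)$, summed over finite symplectic $2$-groups $M$. This equals $\mu_{2,\delta}$-type normalizations already known from the alternating model. Indeed, the BKLPR laws (the $p=2$ analogues of $\mu_{p,0},\mu_{p,1}$, valid for $p=2$ by \cite{BKLPR15}) have exactly the weights $c_{2,\delta}|M|^{1-\delta}/|\Sp(M)|$ and total mass $1$.

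For $\mu_{\sq}^{(\delta)}$, note that $\#\Sur(M_\delta,\Z/2\Z)=2^{d(M)+\delta}-1$, with $d(M)$ even. So the total mass is $\tfrac12\bigl(\bE(2^{d+\delta})-1\bigr)$ under the BKLPR law, where $d+\delta=d(\cok B_n)$. The $\Z/2\Z$-moment of $\cok(B_n)$ is $|\Sym^2(\Z/2\Z)|=2$ by \cite{NW25}, so $\bE(2^{d+\delta})=3$ and the total mass is $1$. If the $p=2$ alternating moment is not available, I would instead compute the sum directly via a $q$-series identity over partitions.

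\textbf{Part (d): moments and uniqueness.} For the moment identification I would compute $\sum_M \mu(M)\#\Sur(M,H)$ for each law. I would use the alternating-model moment $|\Sym^2 H|$ together with twisted sums carrying the extra factor $\#\Sur(\cdot,\Z/2\Z)$ or an extra $\Z/2\Z$ summand. The tool is the decomposition $\#\Sur(\Z/2\Z\oplus N,H)=\sum_{K}\ldots$ over subgroups, and comparing $|\Sym^2|$ with $|\Gamma|$: one has $|\Gamma(H)|=|\Sym^2H|\cdot 2^{t'}$ with a correction counted by $d(H)$ and $t(H)$.

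Uniqueness among laws supported on the prescribed set is the part I would not reprove from scratch. The moments grow like $|\Gamma(H)|\le|\wedge^2H|\cdot|H|^{O(1)}$, which is well-behaved, so I would invoke the moment-uniqueness theorem of Wood or Sawin--Wood for finitely generated $\Z_2$-modules with fixed rank $\delta$. Since the support forces the free rank to be $\delta$, this reduces to the finite torsion part.

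\textbf{Part (b): conditional limits.} I would combine Theorem \ref{thm1c} with (d). First I need to show that conditional on $\ve$ the cokernel eventually lies in the right support, namely rank $\delta$ and torsion of the prescribed square or non-square shape. I would get this from a structural result on $\cok(A-I)$. The form $\langle x,(A-I)y\rangle$ induces a pairing on the torsion making it symplectic up to a $\Z/2\Z$ defect, and $\ve$ detects exactly the parity of $v_2|\cok_{\tors}|$. Square groups have even valuation; the $\Z/2\Z\oplus G\oplus G$ shape has odd valuation. The rank $\delta\equiv n$ comes from the eigenvalue $1$ multiplicity parity for $\SO_n$.

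Then I would use tightness plus the moment method: any subsequential limit of the conditional laws is supported on $\mc S^{(\delta)}_{\sq}$, respectively $\mc S^{(\delta)}_{\nsq}$, and has the moments \eqref{eq1c} by Theorem \ref{thm1c}. By (d) this limit is unique, which gives (b).

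\textbf{Part (c).} This follows by averaging (b) with the probability $1/2$ for each value of $\ve$ from Theorem \ref{thm1c}.

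\textbf{Main obstacle.} I expect the hardest step to be the support statement in (b): the precise structure of the torsion pairing at $p=2$, and excluding shapes such as $(\Z/2\Z)^3\oplus G\oplus G$. My first attempt would be to analyze the quadratic refinement $x\mapsto\langle x,(A-I)^{-1}x\rangle$ on the torsion. I would show its associated bilinear form is alternating except on a subgroup of $2$-rank at most one, which would pin down the non-square part as exactly $\Z/2\Z$.
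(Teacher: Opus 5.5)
\textbf{Main gap: uniqueness in (d).} Your outline of (a), (b), (c) follows the paper's route: size-bias the BKLPR alternating laws by $\#\Sur(\cdot,\Z/2\Z)/2$ or shift them by a $\Z/2\Z$ summand, then use tightness, subsequential limits and uniqueness, then average over $\ve$. However, the uniqueness step in (d), on which (b) also rests, fails as you justify it.

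Wood's moment-uniqueness results need moments $O(|\wedge^2H|)$. In the interpolation form \cite[Theorem 8.2]{Woo17} with base $2$, the hypothesis is $C_\lambda\le F^m2^{\frac12\sum_i\lambda_i(\lambda_i-1)}$, with $\lambda=\rho(H)$ and $F$ independent of $\lambda$. Here $|\Gamma(H)|=2^{d(H)}|H|\,|\wedge^2H|$, so the moments in \eqref{eq1c} overshoot by the unbounded factor $2^{d(H)}|H|$. An $|H|^{O(1)}$ loss is exactly what is not allowed. As noted at the start of Section~\ref{Sec5}, moments of size $|\Sym^2H|$ do not by themselves determine a law. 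Fixing the free rank does not repair this; for $\delta=0$ it changes nothing.

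The missing idea is to use the square structure of the support inside the uniqueness argument itself:
\begin{itemize}
\item For $T=G\oplus G$ one has $\#\Hom(T,H)=4^{\rho(H)\cdot\rho(G)}$.
\item For $T=\Z/2\Z\oplus G\oplus G$ one has $\#\Hom(T,H)=2^{\rho_1(H)}4^{\rho(H)\cdot\rho(G)}$.
\end{itemize}
Divide by the known factors $|H|^\delta$ and $2^{\rho_1(H)}$, and truncate to $T/2^mT$. This turns the problem into a base-$4$ moment problem for the partition $\rho(G_m)$. Theorem~\ref{thm:base4-determinacy} then only requires $C_\lambda\le F^m4^{\frac12\sum_i\lambda_i(\lambda_i-1)}$, which is roughly $|\wedge^2H|^2$. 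Moments of size $|\Gamma(H)|$ satisfy this, by Wood's subgroup estimate \cite[Lemma 7.5]{Woo17} together with $2k\le k(k-1)/2+3$. Without this reparametrization, neither (d) nor (b) is established.

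\textbf{Smaller points.}
\begin{itemize}
\item In (a), knowing that the $\Z/2\Z$-moment of $\cok(B_n)$ tends to $2$ only gives, via Fatou, total mass at most $1$ for $\mu_{\sq}^{(\delta)}$. To get equality you need uniform integrability, e.g.\ $\#\Sur(X,H)^2\le\#\Hom(X,H\oplus H)$ with uniformly bounded $H\oplus H$-moments. The same device is needed in (b) to pass \eqref{eq1c} to subsequential limits.
\item For the support, $(\Z/2\Z)^3\oplus G\oplus G\cong\Z/2\Z\oplus G'\oplus G'$ with $G'=G\oplus\Z/2\Z$, so this is not a shape to exclude. What must be ruled out is an odd number of summands $\Z/2^e\Z$ for some $e\ge2$. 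The paper gets this from the alternating form that the perfect skew-symmetric pairing induces on $M[2^e]/(M[2^{e-1}]+2M[2^{e+1}])$. Your quadratic-refinement approach would have to deliver exactly this.
\end{itemize}
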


\subsection{Organization}\label{Sub14}

The paper is organized as follows. Section \ref{Sec2} gives a preliminary linearization result for the special linear group and its Lie algebra. Section \ref{Sec3} determines the almost-sure $\Z_p$-rank of $\cok(A_n-I_n)$ for a Haar-random matrix $A_n$ in $\SO_n(\Z_p)$ and $\mO_n^-(\Z_p)$.
In Section \ref{Sec4}, we classify the stable $\mO_n(\Z_p)$-orbits on the set of surjections from $L_n$ to a finite abelian $p$-group $H$ when $p$ is odd, and apply Burnside's lemma to prove Theorem \ref{thm1a}(a). In Section \ref{Sec5}, we construct a perfect skew-symmetric pairing on $\cok(A_n-I_n)_{\tors}$, show that it is symplectic when $p$ is odd, and prove Theorem \ref{thm1b}.

Sections \ref{Sec6}--\ref{Sec10} focus on the case $p=2$ and prove Theorems \ref{thm1a}(b), \ref{thm1c} and \ref{thm1d}. Sections \ref{Sec6} and \ref{Sec7} introduce the dyadic quadratic orbit invariant, classify the stable orthogonal orbits, and count them. Section \ref{Sec8} determines the possible dyadic cokernel types, and Section \ref{Sec9} identifies the parity of the $2$-adic valuation of the order of the torsion subgroup with spinor parity and computes the two parity-resolved moment families, proving Theorems \ref{thm1a}(b) and \ref{thm1c}. Finally, we show that the probability laws $\mu_{\sq}^{(\delta)}$ and $\mu_{\nsq}^{(\delta)}$ in Theorem \ref{thm1d} are uniquely determined by their $H$-moments for each $\delta\in\{0,1\}$ and prove Theorem \ref{thm1d} in Section \ref{Sec10}.

\section{The special linear model and its linearization} \label{Sec2}

Before turning to the orthogonal group, we first consider the special linear group as a toy model and show that its linearization preserves the limiting cokernel distribution. Let
$$
\SL_n(\Z_p)=\{A\in\GL_n(\Z_p):\det A=1\}
$$
be the special linear group, with Lie algebra
$$
\mf{sl}_n(\Z_p)=\{X\in\M_n(\Z_p):\tr(X)=0\}.
$$

\begin{lem}\label{lem2a}
Let $H$ be a finite abelian $p$-group. If $n>d(H)$, then $\SL_n(\Z_p)$ acts transitively on $\Sur(L_n,H)$ by precomposition.
\end{lem}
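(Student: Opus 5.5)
First show that $\GL_n(\Z_p)$ acts transitively on $\Sur(L_n,H)$, then use the hypothesis $n>d(H)$ to correct the determinant, so that the action is already transitive when restricted to $\SL_n(\Z_p)$. Throughout, the action is $\phi\cdot g=\phi\circ g$ for $g\in\GL_n(\Z_p)$ and $\phi\in\Sur(L_n,H)$; precomposition with an invertible map preserves surjectivity.

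\emph{Transitivity of $\GL_n(\Z_p)$.} Fix $\phi,\psi\in\Sur(L_n,H)$. Since $L_n$ is free, hence projective, we can lift $\psi$ along $\phi$: there is $g\in\M_n(\Z_p)$ with $\phi\circ g=\psi$. This $g$ need not be invertible, so we improve it. Reduce modulo $p$: the maps $\bar\phi,\bar\psi:\F_p^n\to H/pH$ are surjective, and $\bar\phi\circ\bar g=\bar\psi$. Their kernels $\ker\bar\phi$ and $\ker\bar\psi$ both have dimension $n-d(H)$.

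\emph{Making the lift invertible.} Choose a complement $C$ to $\ker\bar\psi$ in $\F_p^n$. Then $\bar\psi|_C:C\to H/pH$ is an isomorphism, and $\bar\phi\circ\bar g|_C=\bar\psi|_C$ shows that $\bar g|_C$ is injective. Moreover $\bar g(C)\cap\ker\bar\phi=0$, because $\bar\phi$ is injective on $\bar g(C)$. Now modify $g$ on a lift of a basis of $\ker\bar\psi$: replace $g$ by $g+k$, where $k$ maps $L_n$ into $\ker\phi$. Note that $\ker\phi$ reduces onto $\ker\bar\phi$; indeed $\ker\phi\supseteq pL_n\cap\ker\phi$, and the image of $\ker\phi$ in $\F_p^n$ is $\ker\bar\phi$, because $H$ is a $p$-group and $\phi$ is surjective. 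With such $k$ we still have $\phi\circ(g+k)=\psi$. Choose $k$ so that $\overline{g+k}$ sends $\ker\bar\psi$ isomorphically onto $\ker\bar\phi$; a dimension count shows this is possible. Then $\overline{g+k}$ is invertible, so $g+k\in\GL_n(\Z_p)$ by Nakayama.

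\emph{Fixing the determinant.} Next we show that the stabilizer of $\phi$ in $\GL_n(\Z_p)$ surjects onto $\Z_p^\times$ via $\det$. Once we know this, we replace $g$ by $gs$, where $s\in\Stab(\phi)$ has $\det s=(\det g)^{-1}$. Then $gs\in\SL_n(\Z_p)$ and $\phi\circ gs$ still equals $\psi$ up to the order of composition, so we arrange it as $s g$ with $s\in\Stab(\phi)$: indeed $\phi\circ s\circ g=\phi\circ g=\psi$.

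Constructing the stabilizer element is where $n>d(H)$ is used. Because $d(\ker\phi\text{ image})=n-d(H)\ge1$, we can choose a basis $v_1,\dots,v_n$ of $L_n$ with $v_n\in\ker\phi$. To do this, pick $v_n\in\ker\phi$ whose reduction is nonzero; it exists because $\ker\bar\phi\neq0$, and it is primitive, so it extends to a basis. Let $s$ fix $v_1,\dots,v_{n-1}$ and send $v_n\mapsto uv_n$ for a given $u\in\Z_p^\times$. Then $\phi\circ s=\phi$, since $\phi(uv_n)=0=\phi(v_n)$, and $\det s=u$. This yields the required determinant correction.

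I expect the only delicate point to be the lifting step, namely checking that $\ker\phi$ surjects onto $\ker\bar\phi$, so that the correction $k$ exists. This holds because if $\bar\phi(\bar x)=0$ then $\phi(x)\in pH=\phi(pL_n)$, so $x-py\in\ker\phi$ for some $y$, and $x-py$ reduces to $\bar x$. The remaining steps are routine.
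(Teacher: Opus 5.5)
Your proof is correct and follows essentially the paper's route: first establish $\GL_n(\Z_p)$-transitivity, then fix the determinant with an element acting as $\mathrm{diag}(1,\ldots,1,u)$ in a basis whose last vector is a primitive kernel vector. The paper applies this correction only to the normal form $\phi_0$, where $e_n\in\ker\phi_0$ because $n>d(H)$, and it gets $\GL_n$-transitivity by sending an adapted basis (lifts $x_i$ of the $h_i$ together with kernel vectors $y_j'$) to the standard basis, instead of your projective lift plus mod-$p$ correction. One fix to make: drop the remark about $gs$, since $\phi\circ g\circ s=\psi\circ s$ need not equal $\psi$; the correct element is $sg$ with $s\in\Stab(\phi)$, which is what you end up writing.
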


\begin{proof}
Let $d=d(H)$, $\lambda=(\lambda_1,\ldots,\lambda_d)$ be the type of $H$, and write $H=\bigoplus_{i=1}^d\Z/p^{\lambda_i}\Z\,h_i$.
Define $\phi_0\in\Sur(L_n,H)$ by $\phi_0(e_i)=h_i$ for $1\le i\le d$ and $\phi_0(e_i)=0$ for $i>d$. Fix $\phi\in\Sur(L_n,H)$ and choose $x_i\in L_n$ such that $\phi(x_i)=h_i$ for $1\le i\le d$. Then the reductions of $x_1,\ldots,x_d$ modulo $p$ are linearly independent, since their images form a basis of $H/pH$. Hence they can be extended to a $\Z_p$-basis $x_1, \ldots, x_d, y_{d+1}, \ldots, y_n$ of $L_n$. 

For each $d+1 \le j \le n$, let $\phi(y_j)=\sum_{i=1}^{d} a_{ij}h_i$ ($a_{ij} \in \Z$) and set $y_j' = y_j - \sum_{i=1}^{d} a_{ij}x_i \in \ker \phi$. For a matrix $C \in \GL_n(\Z_p)$ given by
$$
Ce_i = x_i \;\; (1 \le i \le d) \quad \text{and} \quad Ce_i = y_i' \;\; (d+1 \le i \le n),
$$
we have $(\phi C)(e_i)=\phi_0(e_i)$ for every $1 \le i \le n$ so $\phi C = \phi_0$. 
Since $n>d$, a matrix 
$$
U=\mathrm{diag}(\underbrace{1,\ldots,1}_{n-1},\det(C)^{-1}) \in \GL_n(\Z_p)
$$ 
satisfies $CU\in\SL_n(\Z_p)$ and $\phi CU=\phi_0 U=\phi_0$.
\end{proof}

\begin{prop}\label{prop2b}
Let $H$ be a finite abelian $p$-group, and let $A_n$ and $B_n$ be Haar-random matrices in $\SL_n(\Z_p)$ and $\mf{sl}_n(\Z_p)$, respectively. If $n>d(H)$, then
$$
\bE(\#\Sur(\cok(A_n-I_n),H))=1
$$
and
$$
\bE(\#\Sur(\cok(B_n),H))=\prod_{i=0}^{d(H)-1}(1-p^{i-n}).
$$
\end{prop}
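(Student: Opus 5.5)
For the group model I will use Burnside-type counting. For any random $A$, the surjections $\cok(A-I_n)\to H$ correspond bijectively to $\phi\in\Sur(L_n,H)$ with $\phi\circ(A-I_n)=0$, that is, with $\phi A=\phi$. Hence
$$
\bE(\#\Sur(\cok(A_n-I_n),H))=\sum_{\phi\in\Sur(L_n,H)}\bP(\phi A_n=\phi).
$$
Since $\SL_n(\Z_p)$ acts on $\Sur(L_n,H)$ by precomposition, $\bP(\phi A_n=\phi)$ equals the Haar measure of the stabilizer $\Stab(\phi)$, which is $1/[\SL_n(\Z_p):\Stab(\phi)]$. By the orbit--stabilizer theorem this index is the size of the orbit of $\phi$. (The stabilizer is open, because it contains the kernel of reduction modulo $p^k$ once $p^kH=0$, so the index is finite.) Therefore the expectation equals the number of orbits, which is $1$ by Lemma \ref{lem2a} when $n>d(H)$.

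For the Lie algebra model, surjections $\cok(B_n)\to H$ correspond to $\phi\in\Sur(L_n,H)$ with $\phi B_n=0$. So
$$
\bE(\#\Sur(\cok(B_n),H))=\sum_{\phi}\bP(\phi B_n=0).
$$
Let $k$ satisfy $p^kH=0$. The condition depends only on $B_n$ modulo $p^k$, and $B_n \bmod p^k$ is uniform on $\mf{sl}_n(\Z/p^k)$. The map $X\mapsto \phi X$ goes from $\mf{sl}_n(\Z/p^k)$ to $\Hom(L_n,H)\cong H^n$. It is additive, so the probability is $1/|\mathrm{im}|$ and the image is a subgroup.

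Next I identify the image. Since $\SL_n$ acts transitively (Lemma \ref{lem2a}) and conjugation preserves $\mf{sl}_n$, I may take $\phi=\phi_0$ with $\phi_0(e_i)=h_i$. Then $\phi_0 X$ is the map $e_j\mapsto\sum_{i\le d}X_{ij}h_i$. The rows $1,\dots,d$ of $X$ are free modulo $p^{\lambda_i}$, except that the trace condition ties $X_{nn}$ to the diagonal. Because $n>d$, row $n$ is unused and $X_{nn}$ can absorb the trace. Hence the image is all of $H^n$, and the probability is $|H|^{-n}$.

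It remains to count. Then $\bE=\#\Sur(L_n,H)/|H|^n$, and a surjection is determined by a surjection modulo $p$, that is, $\#\Sur(\F_p^n,H/pH)$ lifted. This gives
$$
\frac{\#\Sur(L_n,H)}{|H|^n}=\frac{\#\Sur(\F_p^n,\F_p^{d})}{p^{dn}}=\prod_{i=0}^{d-1}(1-p^{i-n}).
$$

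The only delicate point is checking that the trace constraint never restricts the image. This uses $n>d$ so that some diagonal entry lies outside the rows $\phi_0$ sees. The condition $n>d$ also supplies the transitivity needed to reduce to $\phi_0$.
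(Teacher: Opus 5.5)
Your proof is correct and follows the paper's argument. The group case is the same orbit--stabilizer computation combined with Lemma \ref{lem2a}, and the Lie algebra case rests on the same surjectivity of $X\mapsto\phi X$ from $\mf{sl}_n(\Z_p)$ onto $\Hom(L_n,H)$. The only difference is cosmetic: you conjugate to $\phi_0$ and let the $(n,n)$-entry absorb the trace, whereas the paper keeps $\phi$ arbitrary and subtracts $\tr(X_0)\,v\ell$ for a primitive $v\in\ker\phi$ with $\ell(v)=1$. That is the same device with $e_n$ replaced by $v$, and it avoids your conjugation-invariance step.
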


\begin{proof}
For the first identity, we adapt the orbit-stabilizer calculation in the proof of \cite[Theorem 3.1]{LT19}. A surjection from $\cok(A_n-I_n)$ to $H$ is equivalent to a map $\phi\in\Sur(L_n,H)$ satisfying $\phi A_n=\phi$. By Lemma \ref{lem2a}, the elements of $\Sur(L_n,H)$ form a single $\SL_n(\Z_p)$-orbit. Hence the orbit-stabilizer formula gives
$$
[\SL_n(\Z_p):\Stab(\phi)]=\#\Sur(L_n,H),
$$
where $\Stab(\phi)=\{A\in\SL_n(\Z_p):\phi A=\phi\}$. By the translation-invariance of Haar measure,
$$
\bP(\phi A_n=\phi)=\frac{1}{\#\Sur(L_n,H)}
$$
for every $\phi\in\Sur(L_n,H)$, and consequently,
$$
\bE(\#\Sur(\cok(A_n-I_n),H))=\sum_{\phi\in\Sur(L_n,H)}\bP(\phi A_n=\phi)=1.
$$

For the second identity, fix $\phi\in\Sur(L_n,H)$ and consider the homomorphism
$$
T_\phi:\mf{sl}_n(\Z_p)\rightarrow\Hom(L_n,H)
\quad (X\mapsto\phi X).
$$
Since $n>d:=d(H)$, $\ker\phi$ contains a primitive vector $v\in\ker\phi$. Choose $\ell\in\Hom(L_n,\Z_p)$ such that $\ell(v)=1$. Given $f\in\Hom(L_n,H)$, choose $X_0\in\End(L_n)$ such that $\phi X_0=f$ and set
$$
X=X_0-\tr(X_0)v\ell,
$$
where $v\ell\in\End(L_n)$ is given by $x \mapsto \ell(x)v$. Since $\tr(v\ell)=\ell(v)=1$ and $\phi(v\ell)=0$, we have $X\in\mf{sl}_n(\Z_p)$ and $T_\phi(X)=\phi X_0=f$. Thus $T_\phi$ is surjective. 

It follows that $T_\phi(B_n)$ is uniform on $\Hom(L_n,H)$, and hence
$$
\bP(\phi B_n=0)=\frac{1}{\#\Hom(L_n,H)}.
$$
Consequently,
$$
\bE(\#\Sur(\cok(B_n),H))=\sum_{\phi\in\Sur(L_n,H)}\bP(\phi B_n=0)=\frac{\#\Sur(L_n,H)}{\#\Hom(L_n,H)}.
$$
By Nakayama's lemma, a homomorphism from $L_n$ to $H$ is surjective if and only if its modulo $p$ reduction from $\F_p^n$ to $\F_p^d$ is surjective. Hence
\begin{equation*}
\frac{\#\Sur(L_n,H)}{\#\Hom(L_n,H)}
=\frac{\#\Sur(\F_p^n, \F_p^d)}{\#\Hom(\F_p^n, \F_p^d)}
=\frac{1}{p^{dn}} \prod_{i=0}^{d-1} (p^n-p^i)
=\prod_{i=0}^{d-1}(1-p^{i-n}). \qedhere
\end{equation*}
\end{proof}

\begin{thm}\label{thm2c}
Let $A_n$ and $B_n$ be as in Proposition \ref{prop2b}. Then for every finite abelian $p$-group $G$,
$$
\lim_{n\to\infty}\bP(\cok(A_n-I_n)\cong G)
=\lim_{n\to\infty}\bP(\cok(B_n)\cong G)
=\frac{1}{|\Aut(G)|}\prod_{i=1}^\infty(1-p^{-i}).
$$
\end{thm}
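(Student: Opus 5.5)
Theorem \ref{thm2c} follows from Proposition \ref{prop2b} together with a moment-determinacy theorem for random finitely generated $\Z_p$-modules. Proposition \ref{prop2b} shows that for each fixed finite abelian $p$-group $H$ we have
$$
\lim_{n\to\infty}\bE(\#\Sur(\cok(A_n-I_n),H))=1,
$$
with equality for all $n>d(H)$. It also gives
$$
\lim_{n\to\infty}\bE(\#\Sur(\cok(B_n),H))=\lim_{n\to\infty}\prod_{i=0}^{d(H)-1}(1-p^{i-n})=1.
$$
So both sequences have all $H$-moments tending to $1$, which are exactly the moments of the Cohen--Lenstra distribution
$$
\mu_{\mathrm{CL}}(G)=\frac{1}{|\Aut(G)|}\prod_{i\ge1}(1-p^{-i})
$$
on finite abelian $p$-groups. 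These moments are of order $O(1)$, so they are well-behaved in the sense of Wood's moment theorem, or equivalently of Ellenberg--Venkatesh--Westerland \cite{EVW16}. That theorem says: if $X_n$ are random finite abelian $p$-groups with $\bE(\#\Sur(X_n,H))\to 1$ for every $H$, then $\bP(X_n\cong G)\to\mu_{\mathrm{CL}}(G)$ for every $G$.

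The main step I expect to need care with is that $\cok(A_n-I_n)$ and $\cok(B_n)$ are a priori only finitely generated $\Z_p$-modules, since they could have positive rank. I would first show that they are finite almost surely.
\begin{itemize}
\item For $B_n$: the locus $\det=0$ in the affine space $\mf{sl}_n(\Z_p)\cong\Z_p^{n^2-1}$ is a proper hypersurface when $n\ge2$, since $\det$ is not identically zero on trace-zero matrices (take $\mathrm{diag}(1,-1,0,\ldots)$ adjusted, or a cyclic permutation matrix). A proper hypersurface has Haar measure zero.
\item For $A_n$: the locus $\det(A-I)=0$ is a proper Zariski-closed subset of the irreducible smooth $\Z_p$-group $\SL_n$. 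It is proper because some element of $\SL_n(\Z_p)$ has no eigenvalue $1$, for instance $-I_n$ when $n$ is even, or a suitable diagonal matrix. Hence this locus has Haar measure zero, by the standard fact that proper analytic subvarieties of a $p$-adic analytic manifold are null.
\end{itemize}
Alternatively, the moment method itself handles this. Take $H=(\Z/p^k)^{r}$ with $k$ large. A positive-rank module $X$ has $\#\Sur(X,H)\ge\#\Sur(\Z_p,\Z/p^k)$, and this grows with $k$. The uniform moment bound $1$ therefore forces $\bP(\rank>0)\to0$ in the limit, which is all the conclusion needs. I would present this second argument, since it is self-contained.

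Finally, I would apply the moment theorem separately to each sequence. Its uniqueness part, namely that a distribution on finite abelian $p$-groups with moments $1$ is unique, together with its tightness and convergence part, gives convergence of $\bP(X_n\cong G)$ to $\mu_{\mathrm{CL}}(G)$. The rank-zero mass is included because the total limiting mass $\sum_G\mu_{\mathrm{CL}}(G)=1$ is supported on finite groups. This yields both limits and their common value.
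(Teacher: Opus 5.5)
Your proposal is correct and follows the paper's proof. The paper likewise takes the limiting moments $1$ from Proposition \ref{prop2b} and concludes by Wood's moment theorem, handling the possibility of positive rank through the argument of \cite[Corollary 3.4]{Woo19}, whereas you make that step explicit by proving almost-sure finiteness.

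One small fix is needed in your moment-based finiteness argument. Take $H=\Z/p^k\Z$ rather than $(\Z/p^k\Z)^r$, since for $r\ge2$ a positive-rank module such as $\Z_p$ need not surject onto $H$. With $r=1$ you get $\bP(\rank_{\Z_p}X_n>0)\le 1/(p^k-p^{k-1})$ for every $k$.
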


\begin{proof}
By Proposition \ref{prop2b}, for every finite abelian $p$-group $H$,
$$
\lim_{n\to\infty}\bE(\#\Sur(\cok(A_n-I_n),H))
=\lim_{n\to\infty}\bE(\#\Sur(\cok(B_n),H))=1.
$$ 
Wood's moment theorem \cite[Theorem 3.1]{Woo19}, together with the argument of \cite[Corollary 3.4]{Woo19}, completes the proof.
\end{proof}

\section{Almost-sure \texorpdfstring{$\Z_p$}{Zp}-rank of the orthogonal model}\label{Sec3}

In this section, we determine the almost-sure $\Z_p$-rank of $\cok(A_n-I_n)$ for a Haar-random matrix $A_n$ in either $\SO_n(\Z_p)$ or $\mO_n^-(\Z_p)$, according to whether $\det A_n=(-1)^n$ or $\det A_n=(-1)^{n+1}$. In particular, if $A_n$ (resp. $B_n$) is Haar-random in $\SO_n(\Z_p)$ (resp. $\Alt_n(\Z_p)$), then $\cok(A_n-I_n)$ and $\cok(B_n)$ have the same almost-sure $\Z_p$-rank.

\begin{prop}\label{prop3a}
Let $A_n$ be a Haar-random matrix in either $\SO_n(\Z_p)$ or $\mO_n^-(\Z_p)$. Then almost surely,
$$
\rank_{\Z_p}\cok(A_n-I_n)=
\begin{cases}
0,& \det A_n=(-1)^n,\\
1,& \det A_n=(-1)^{n+1}.
\end{cases}
$$
In the latter case, $1$ is a simple root of the characteristic polynomial of $A_n$ almost surely.
\end{prop}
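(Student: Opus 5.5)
The plan is to separate a deterministic lower bound on the rank from a genericity statement. Since $\rank_{\Z_p}\cok(A_n-I_n)=\dim_{\Q_p}\ker(A_n-I_n)$ over $\Q_p$, everything reduces to the multiplicity of the eigenvalue $1$ of $A_n$ viewed in $\GL_n(\Q_p)$.

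\emph{Step 1: lower bound from the determinant.} For $A\in\mO_n(\Z_p)$ the characteristic polynomial $\chi_A(t)=\det(tI_n-A)$ is self-reciprocal up to sign. Indeed, $A^{-1}=A^T$ gives $\chi_A(t)=\det(tI_n-A^{-1})=\det(-A^{-1})\,t^n\chi_A(1/t)=(-1)^n\det(A)^{-1}t^n\chi_A(1/t)$. Put $\eta=(-1)^n\det A=\pm1$. Then the roots of $\chi_A$ other than $\pm1$ come in pairs $\{\lambda,\lambda^{-1}\}$ with equal multiplicities. Evaluating at $t=1$ gives $\chi_A(1)=\eta\chi_A(1)$, so if $\det A=(-1)^{n+1}$ then $\chi_A(1)=0$. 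Hence $1$ is an eigenvalue and the rank is at least $1$ in that case, for every $A$.

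For the almost-sure upper bound I will use a more refined parity count. Write $m_{\pm}$ for the multiplicities of $\pm1$. The paired roots give $n\equiv m_++m_-\pmod 2$, and $\det A=(-1)^{m_-}$. Thus $\det A=(-1)^n$ forces $m_+$ even, while $\det A=(-1)^{n+1}$ forces $m_+$ odd. Because $A$ is orthogonal over a field of characteristic $0$, it is semisimple on the generalized $1$-eigenspace: $\ker(A-I)^2=\ker(A-I)$, since $(A-I)^Tx\cdot(A-I)x$ relations over $\Q_p$ are not immediately available in anisotropic form. To avoid this issue, I will instead prove directly that the discriminant-type condition fails only on a null set.

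\emph{Step 2: genericity.} Consider the polynomial function $D(A)$ defined on each component $\mathcal C\in\{\SO_n,\mO_n^-\}$ as follows.
\begin{itemize}
\item If $\det=(-1)^n$, take $D(A)=\chi_A(1)=\det(I-A)$.
\item If $\det=(-1)^{n+1}$, take $D(A)=\chi_A'(1)$.
\end{itemize}
$D\neq0$ means rank $0$ in the first case, and $1$ is a simple root, hence rank exactly $1$, in the second. The component $\mathcal C(\Z_p)$ is a $p$-adic analytic manifold; it is Zariski dense in the corresponding irreducible component of the variety $\mO_n$ over $\Q_p$, since $\SO_n(\Z_p)$ is open in $\SO_n(\Q_p)$, which is Zariski dense. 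Haar measure is given by a nonvanishing analytic volume form in local charts. The zero set of a polynomial that does not vanish identically on a Zariski-dense analytic set of $\mathcal C$ therefore has Haar measure $0$, since a nonzero analytic function on a ball has a null zero set. It thus suffices to exhibit one element of $\mathcal C(\Z_p)$ with $D\neq0$.

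\emph{Step 3: explicit witnesses.} This is the step where care is needed, and I expect it to be the main (though mild) obstacle: the witness must be integral and must respect the forced parity of $m_\pm$. I will use block diagonal matrices built from rotation blocks $R=\begin{pmatrix}a&-b\\b&a\end{pmatrix}$ with $a^2+b^2=1$ in $\Z_p$ and $a\neq\pm1$, which have eigenvalues different from $\pm1$. Such $a,b$ exist, for example $a=(1-s^2)/(1+s^2)$ and $b=2s/(1+s^2)$ with $s\in p\Z_p$ nonzero. Such a block is in $\SO_2(\Z_p)$.
\begin{itemize}
\item For $\det=(-1)^n$ with $n$ even, use rotation blocks only.
\item For $\det=(-1)^n$ with $n$ odd, use rotation blocks plus the $1\times1$ block $(-1)$.
\item For $\det=(-1)^{n+1}$ with $n$ odd, use rotation blocks plus $(1)$.
\item For $\det=(-1)^{n+1}$ with $n$ even, use rotation blocks plus $\mathrm{diag}(1,-1)$.
\end{itemize}
In each case $m_+\in\{0,1\}$ as required, so $D\ne0$.

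Combining the three steps gives rank $0$ almost surely when $\det A_n=(-1)^n$. When $\det A_n=(-1)^{n+1}$, $1$ is a simple root almost surely, so the rank is exactly $1$.
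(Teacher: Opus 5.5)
Your proof is correct and follows the paper's route. Self-reciprocity of $\chi_A$ forces $\chi_A(1)=0$ when $\det A=(-1)^{n+1}$. Proper Zariski-closed loci in $\SO_n$ or $\mO_n^-$ are Haar-null; the paper cites \cite[Proposition 2.1(b)]{BKLPR15} for this where you sketch the $p$-adic analytic argument. One explicit element then shows each locus is proper. Your rotation blocks are unnecessary, since only the eigenvalue $1$ matters and the paper's witnesses $-I_n$ and $\mathrm{diag}(1,-1,\ldots,-1)$ already work. Your single condition $\chi_A'(1)\ne0$ also subsumes the paper's separate check of the locus $\rank(A-I_n)\le n-2$.

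Delete the aside asserting that an orthogonal $A$ over $\Q_p$ is semisimple on its generalized $1$-eigenspace. It is false: for $n\ge5$ the standard form is isotropic, and an Eichler transformation $E_{u,x}$ with $u$ isotropic is a nontrivial unipotent in $\SO_n(\Z_p)$. Nothing in your final argument depends on it.
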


\begin{proof}
For $A_n\in\mO_n(\Z_p)$, consider its characteristic polynomial
$$
P_{A_n}(t)=\det(tI_n-A_n) \in \Z_p[t]
$$
and write $V=\Q_p^n$. Then
$$
\rank_{\Z_p}\cok(A_n-I_n)=\dim_{\Q_p}\ker(A_n-I_n:V\to V).
$$
Since $A_n^{-1}=A_n^T$, we have $P_{A_n^{-1}}(t)=P_{A_n}(t)$. Hence
\begin{equation} \label{eq3a}
\begin{split}
P_{A_n}(t) &=\det(-tA_n(t^{-1}I_n-A_n^{-1})) \\
&=(-1)^nt^n\det(A_n)P_{A_n^{-1}}(t^{-1}) \\
&=(-1)^nt^n\det(A_n)P_{A_n}(t^{-1}).
\end{split}
\end{equation}
Let $\mc{M}_n$ denote either $\SO_n$ or $\mO_n^-$. By \cite[Proposition 2.1(b)]{BKLPR15}, the $\Z_p$-points of any proper Zariski-closed subset of $\mc{M}_n$ have Haar measure zero in $\mc{M}_n(\Z_p)$.

Suppose first that $\det A_n=(-1)^n$: that is, $n$ is even if $\mc{M}_n=\SO_n$, and $n$ is odd if $\mc{M}_n=\mO_n^-$. Then the locus
$$
\{A \in \mc{M}_n(\Z_p):\rank_{\Z_p}\cok(A-I_n)\ge1\}
=\{A \in \mc{M}_n(\Z_p):\det(A-I_n)=0\}
$$
is the set of $\Z_p$-points of a Zariski-closed subset of $\mc{M}_n$. This subset is proper, since $-I_n \in \mc{M}_n(\Z_p)$ and $\det(-I_n-I_n)=(-2)^n \ne 0$. Hence $\rank_{\Z_p}\cok(A_n-I_n)=0$ almost surely.

Now suppose that $\det A_n=(-1)^{n+1}$: that is, $n$ is odd if $\mc{M}_n=\SO_n$, and $n$ is even if $\mc{M}_n=\mO_n^-$. Then \eqref{eq3a} gives $P_{A_n}(1)=0$, so $\rank_{\Z_p}\cok(A_n-I_n)\ge1$. The locus
$$
\{A \in \mc{M}_n(\Z_p):\rank_{\Z_p}\cok(A-I_n)\ge2\}
=\{A \in \mc{M}_n(\Z_p):\rank_{\Q_p}(A-I_n)\le n-2\}
$$
is the set of $\Z_p$-points of a Zariski-closed subset of $\mc{M}_n$. This subset is proper, since
$$
D_n=\mathrm{diag}(1, \underbrace{-1,\ldots,-1}_{n-1}) \in\mc{M}_n(\Z_p)
$$
and $\rank_{\Q_p}(D_n-I_n)=n-1$. Hence $\rank_{\Z_p}\cok(A_n-I_n)=1$ almost surely.

It remains to show that $t=1$ is a simple root of $P_{A_n}(t)$ almost surely, under the assumption $\det A_n=(-1)^{n+1}$. The locus
$$
\{A \in \mc{M}_n(\Z_p):P_{A}'(1)=0\}
$$
is the set of $\Z_p$-points of a Zariski-closed subset of $\mc{M}_n$. This subset is proper, since $D_n \in \mc{M}_n(\Z_p)$ defined above satisfies
$$
P_{D_n}(t)=(t-1)(t+1)^{n-1},\quad 
P_{D_n}'(1)=2^{n-1} \ne 0.
$$
Hence $P_{A_n}'(1)\ne0$ almost surely, so $t=1$ is a simple root of $P_{A_n}(t)$ almost surely.
\end{proof}

\section{Stable orthogonal orbits at odd primes}\label{Sec4}

Throughout this section, assume that $p$ is odd. We classify the $\mO_n(\Z_p)$-orbits on $\Sur(L_n,H)$ for a finite abelian $p$-group $H$ and use this classification to prove Theorem \ref{thm1a}(a).

\subsection{The symmetric-square orbit invariant}\label{Sub41}

We first record a form of Burnside's lemma for a transitive action of a compact group on a finite set.

\begin{lem}\label{lem4a}
Let $G$ be a compact group acting continuously and transitively on a finite set $\Omega$, let $K$ be the stabilizer of an element of $\Omega$, and let $\psi:G\to\{\pm1\}$ be a continuous character. Then
$$
\int_G\psi(g)\#\Fix_\Omega(g)\,dg
=
\begin{cases}
1,& \psi|_K=1,\\
0,& \psi|_K\ne1,
\end{cases}
$$
where $\Fix_\Omega(g)$ denotes the set of fixed points of $g$ in $\Omega$.
\end{lem}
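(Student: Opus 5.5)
The plan is to reduce to the stabilizer $K$ by identifying $\Omega$ with $G/K$. Fix $\omega_0\in\Omega$ with stabilizer $K$. By transitivity, $\Omega=\{g'\omega_0 : g'\in G\}$, and $g'\omega_0\mapsto g'K$ is a bijection $\Omega\cong G/K$. Since $G$ acts continuously on the discrete finite set $\Omega$, $K$ is open; being closed as well, it has finite index $[G:K]=|\Omega|$. Moreover, $\psi|_K$ is a continuous character $K\to\{\pm1\}$.

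Next I will rewrite the fixed-point count as a sum of indicators:
$$
\#\Fix_\Omega(g)=\sum_{\omega\in\Omega}\mathbf 1_{g\omega=\omega}.
$$
The stabilizer of $\omega=g'\omega_0$ is $g'K g'^{-1}$. Exchanging the finite sum with the integral gives
$$
\int_G\psi(g)\#\Fix_\Omega(g)\,dg=\sum_{\omega\in\Omega}\int_{\Stab(\omega)}\psi(g)\,dg .
$$
The conjugation $g\mapsto g'gg'^{-1}$ preserves Haar measure on $G$, since compact groups are unimodular, and it satisfies $\psi(g'gg'^{-1})=\psi(g)$ because $\psi$ takes values in an abelian group. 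Hence every summand equals $\int_K\psi(g)\,dg$, and the total is $|\Omega|\int_K\psi\,dg$.

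It remains to evaluate $\int_K\psi\,dg$. Restricted to the open subgroup $K$, the Haar probability measure on $G$ is $|\Omega|^{-1}$ times the Haar probability measure $dk$ of $K$. If $\psi|_K=1$, then $\int_K\psi\,dg=\mathrm{vol}(K)=1/|\Omega|$, and the total is $1$. If $\psi|_K\ne1$, choose $k_0\in K$ with $\psi(k_0)=-1$. Translation invariance of Haar measure on $K$ then gives
$$
\int_K\psi(k)\,dk=\int_K\psi(k_0k)\,dk=-\int_K\psi(k)\,dk ,
$$
so the integral is $0$.

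I do not expect any step to be a genuine obstacle. The only care needed is in the measure-theoretic bookkeeping: $K$ must be open so that it has positive measure $1/[G:K]$, and the Haar measure must be invariant under conjugation, which holds because $G$ is compact.
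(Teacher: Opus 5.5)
Your proposal is correct and follows essentially the same argument as the paper's proof. Both write $\#\Fix_\Omega(g)$ as a sum of stabilizer indicators, note that each stabilizer is conjugate to $K$ and has Haar measure $|\Omega|^{-1}$, and show the integral vanishes by translating by some $k\in K$ with $\psi(k)=-1$; your extra bookkeeping (openness of $K$, conjugation invariance of Haar measure, $\psi$ being a class function) only spells out what the paper leaves implicit.
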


\begin{proof}
Let $\omega\in\Omega$ and $G_\omega$ be its stabilizer. Since
$$
\#\Fix_\Omega(g)=\sum_{\omega\in\Omega}\mathbf{1}_{G_\omega}(g)
$$
for every $g \in G$, we have
$$
\int_G\psi(g)\#\Fix_\Omega(g)\,dg =\sum_{\omega\in\Omega}\int_{G_\omega}\psi(g)\,dg.
$$
Since the action of $G$ on $\Omega$ is transitive, each $G_\omega$ is conjugate to $K$ and has Haar measure $|\Omega|^{-1}$. If $\psi|_K=1$, each integral on the right hand side is $|\Omega|^{-1}$, so their sum is $1$. If $\psi|_K\ne1$, choose $k\in K$ with $\psi(k)=-1$. The translation-invariance of Haar measure gives
$$
\int_K\psi(g)\,dg =\int_K\psi(kg)\,dg =-\int_K\psi(g)\,dg,
$$
so each integral on the right hand side is zero.
\end{proof}

Define a function $\Theta_n : \Sur(L_n,H) \to \Sym^2H$ by
$$
\Theta_n(f)=\sum_{k=1}^nf(e_k)^2\in\Sym^2H.
$$
Recall that we write $xy$ for the class of $x \otimes y$ in $\Sym^2H$. We first show that $\Theta_n$ is invariant under the action of $\mO_n(\Z_p)$ by precomposition. Let $U=(a_{ij})\in\mO_n(\Z_p)$ and set $x_i=f(e_i)$. Then
$$
\begin{aligned}
\Theta_n(f\circ U) &=\sum_{j=1}^n\left(\sum_{i=1}^na_{ij}x_i\right)^2\\
&=\sum_{i=1}^n\left(\sum_{j=1}^na_{ij}^2\right)x_i^2+2\sum_{1\le i<i'\le n} \left(\sum_{j=1}^na_{ij}a_{i'j}\right)x_ix_{i'}\\
&=\sum_{i=1}^nx_i^2=\Theta_n(f),
\end{aligned}
$$
where the third equality follows from $UU^T=I_n$. Thus $\Theta_n$ is constant on each $\mO_n(\Z_p)$-orbit.

In the remainder of this section, let $\lambda=(\lambda_1,\ldots,\lambda_r)$ be the type of $H$ and write
$$
H=\bigoplus_{i=1}^r\Z/p^{\lambda_i}\Z\,h_i.
$$
The standard bilinear form identifies $L_n=\Z_p^n$ with $\Hom_{\Z_p}(L_n,\Z_p)$. For each $1\le i\le r$, reduction modulo $p^{\lambda_i}$ gives an isomorphism
$$
L_n/p^{\lambda_i}L_n \cong \Hom_{\Z_p}(L_n,\Z/p^{\lambda_i}\Z),
\quad u+p^{\lambda_i}L_n \mapsto(x\mapsto\langle u,x\rangle\bmod p^{\lambda_i}).
$$
Consequently, every $f\in\Sur(L_n,H)$ can be written as
\begin{equation}\label{eq4a}
f(x)=\sum_{i=1}^r
(\langle u_i,x\rangle\bmod p^{\lambda_i})h_i
\end{equation}
for some $u_1,\ldots,u_r \in L_n$, where $u_i+p^{\lambda_i}L_n \in L_n/p^{\lambda_i}L_n$ is uniquely determined by $f$. A choice of representatives $(u_1,\ldots,u_r)$ is called a \emph{frame representing $f$}. 

If $U\in\mO_n(\Z_p)$ and $F\in\M_{r\times n}(\Z_p)$ is the matrix whose $i$-th row is $u_i^T$, then the $i$-th row of $FU$ is $u_i^TU=(U^Tu_i)^T$. Moreover,
$$
(f\circ U)(x)=\sum_{i=1}^r
(\langle U^Tu_i,x\rangle\bmod p^{\lambda_i})h_i,
$$
so $(U^Tu_1,\ldots,U^Tu_r)$ is a frame representing $f\circ U$. By Nakayama's lemma, $f$ is surjective if and only if the reductions $\bar{u}_1,\ldots,\bar{u}_r$ modulo $p$ are linearly independent in $L_n/pL_n$.

\begin{lem}\label{lem4b}
For every finite abelian $p$-group $H$, the map
$$
\Theta_n:\Sur(L_n,H)\to\Sym^2H
$$
is surjective for all sufficiently large $n$.
\end{lem}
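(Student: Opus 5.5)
Given a target $s\in\Sym^2H$, write it via \eqref{eq1b} as
$$s=\sum_{1\le i\le j\le r}c_{ij}h_ih_j, \qquad c_{ij}\in\Z.$$
The plan is to build an explicit surjection $f$ with $\Theta_n(f)=s$. First observe that if $x_k=f(e_k)$, then
$$\Theta_n(f)=\sum_k x_k^2 .$$
So it suffices to find $n$ elements $x_1,\dots,x_n\in H$ that generate $H$ and satisfy $\sum_k x_k^2=s$. Once such elements exist, set $f(e_k)=x_k$. For larger $n$ we pad with extra coordinates $x_k=0$, which changes neither the sum nor surjectivity. Hence it is enough to exhibit one finite list, and every larger $n$ follows.

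\textbf{Generation.} Put $x_i=h_i$ for $1\le i\le r$. This contributes $\sum_i h_i^2$ and makes $f$ surjective. It remains to represent
$$s'=s-\sum_i h_i^2$$
as a sum of squares, since appending further $x_k$ preserves surjectivity.

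\textbf{Representing $s'$.} Because $p$ is odd, $2$ is invertible modulo every $p^{\lambda}$. This gives two building blocks:
$$(x+y)^2-x^2-y^2=2xy, \qquad m^2\,y^2=(my)^2 .$$

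\emph{Off-diagonal terms.} For $i<j$, choose an integer $a$ with
$$2a\equiv c_{ij}\pmod{p^{\min(\lambda_i,\lambda_j)}}.$$
Then $c_{ij}h_ih_j=(h_i+ah_j)^2-h_i^2-a^2h_j^2$, which expresses it through squares and negatives of squares.

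\emph{Diagonal terms.} A single diagonal term $c\,h_i^2$ is a combination of squares $(mh_i)^2$ with integer coefficients.

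\textbf{Removing negative signs.} This is the main obstacle: a $-y^2$ cannot be used directly. I would first try the classical fact that every element of $\Z/p^\lambda\Z$ is a sum of two squares when $p$ is odd. Indeed, modulo $p$ the sets $\{a^2\}$ and $\{c-b^2\}$ each have $(p+1)/2$ elements, so they intersect. If we can arrange that one of the squares is a unit, Hensel's lemma lifts the solution. In particular $-1\equiv a^2+b^2$ for suitable integers $a,b$, so
$$-y^2=(ay)^2+(by)^2 .$$
Thus every signed sum of squares, and likewise every integer multiple $c\,y^2$, becomes an honest sum of squares of elements of $H$.

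\textbf{Counting.} The number of squares needed is bounded in terms of $r$ alone, roughly $O(r^2)$ with a fixed constant. So "sufficiently large $n$" depends only on $H$. Concatenating all these $x_k$ after $h_1,\dots,h_r$ gives the desired surjection.
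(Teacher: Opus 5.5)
Your proof is correct and has the same structure as the paper's: send $e_1,\dots,e_r$ to $h_1,\dots,h_r$, write $s-\sum_i h_i^2$ as a sum of squares using that $2$ is invertible (your identity $(h_i+ah_j)^2-h_i^2-(ah_j)^2=2a\,h_ih_j$ plays the role of the paper's polarization $xy=\bigl(\tfrac{x+y}{2}\bigr)^2-\bigl(\tfrac{x-y}{2}\bigr)^2$), pad with zeros, and use finiteness of $\Sym^2H$ to get a bound on $n$ depending only on $H$. The only difference is how you remove minus signs: the paper simply writes $-z=(|\Sym^2H|-1)z$, so no Hensel argument is needed. Note also that the general ``classical fact'' you quote is false for nonunits: for $p\equiv 3\pmod 4$, $p$ is not a sum of two squares modulo $p^2$. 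The only case you actually use is $-1\equiv a^2+b^2$ modulo the exponent $p^{\lambda_1}$ of $\Sym^2H$, and that case is fine, because $a^2+b^2\equiv -1\not\equiv 0\pmod p$ forces one of $a,b$ to be a unit.
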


\begin{proof}
By \eqref{eq1b}, the elements $xy$ with $x,y\in H$ generate $\Sym^2H$. Since $p$ is odd,
$$
xy=\left(\frac{x+y}{2}\right)^2-\left(\frac{x-y}{2}\right)^2
=\left(\frac{x+y}{2}\right)^2+(|\Sym^2H|-1)\left(\frac{x-y}{2}\right)^2
$$
in a finite group $\Sym^2H$. Hence every element of $\Sym^2H$ is a sum of squares. Since $\Sym^2H$ is finite, there is an integer $m\ge1$ such that every element of $\Sym^2H$ is a sum of at most $m$ squares.

Fix $\theta\in\Sym^2H$ and choose $x_1,\ldots,x_m\in H$ such that $\theta-\sum_{i=1}^rh_i^2=\sum_{j=1}^mx_j^2$. For every $n\ge r+m$, define $f \in \Hom(L_n,H)$ by
$$
f\left( \sum_{i=1}^{n} a_ie_i \right)=\sum_{i=1}^{r} a_ih_i+\sum_{i=r+1}^{r+m} a_ix_{i-r}.
$$
Then $f$ is surjective and $\Theta_n(f)=\sum_{i=1}^rh_i^2+\sum_{i=r+1}^{r+m}x_{i-r}^2=\theta$.
\end{proof}

\begin{lem}\label{lem4c}
Let $H$ be a finite abelian $p$-group and $f,g \in \Sur(L_n,H)$ be represented by frames $(u_1,\ldots,u_r)$ and $(v_1,\ldots,v_r)$, respectively. Then $\Theta_n(f)=\Theta_n(g)$ if and only if
\begin{equation}\label{eq4b}
\langle u_i,u_j\rangle\equiv\langle v_i,v_j\rangle
\pmod{p^{\min(\lambda_i,\lambda_j)}}
\end{equation}
for every $1\le i,j\le r$.
\end{lem}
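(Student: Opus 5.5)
Write $f(e_k)$ in terms of the frame. By \eqref{eq4a}, $f(e_k)=\sum_{i=1}^r (u_{i,k}\bmod p^{\lambda_i})h_i$, where $u_{i,k}$ denotes the $k$-th coordinate of $u_i$. The plan is to expand $\Theta_n(f)=\sum_k f(e_k)^2$ bilinearly in $\Sym^2H$ and then read off coordinates in the basis given by \eqref{eq1b}. Expanding gives
$$
\Theta_n(f)=\sum_{i,j=1}^r\Big(\sum_{k=1}^n u_{i,k}u_{j,k}\Big)h_ih_j=\sum_{i=1}^r\langle u_i,u_i\rangle h_i^2+2\sum_{1\le i<j\le r}\langle u_i,u_j\rangle h_ih_j .
$$
This manipulation is legitimate because multiplication by an integer on the cyclic summand $\Z/p^{\min(\lambda_i,\lambda_j)}\Z\,h_ih_j$ only depends on that integer modulo $p^{\min(\lambda_i,\lambda_j)}$. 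In particular, the $p$-adic integer $\langle u_i,u_j\rangle$ acts through its residue modulo $p^{\min(\lambda_i,\lambda_j)}$. The same formula holds for $g$ with $v_i$ in place of $u_i$.

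Next I compare coordinates using the direct-sum decomposition \eqref{eq1b}, in which the elements $h_ih_j$ for $i\le j$ generate independent cyclic summands of order $p^{\min(\lambda_i,\lambda_j)}$. This shows that $\Theta_n(f)=\Theta_n(g)$ holds if and only if two families of congruences hold. The first is $\langle u_i,u_i\rangle\equiv\langle v_i,v_i\rangle \pmod{p^{\lambda_i}}$ for each $i$. The second is $2\langle u_i,u_j\rangle\equiv 2\langle v_i,v_j\rangle\pmod{p^{\min(\lambda_i,\lambda_j)}}$ for each $i<j$. Since $p$ is odd, the factor $2$ is a unit and can be cancelled. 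Symmetry of the form then covers the pairs with $i>j$, so these conditions are exactly \eqref{eq4b}.

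Finally, I should check that the statement does not depend on which frames are chosen. Replacing $u_i$ by $u_i+p^{\lambda_i}w$ changes $\langle u_i,u_j\rangle$ by a multiple of $p^{\lambda_i}$. Any multiple of $p^{\lambda_i}$ is $0$ modulo $p^{\min(\lambda_i,\lambda_j)}$, so \eqref{eq4b} is unaffected. This observation is consistent with the computation above, which already holds for any choice of representatives.

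The only delicate point is the coefficient-extraction step, namely that \eqref{eq1b} is an honest direct sum with $h_ih_j$ of exact order $p^{\min(\lambda_i,\lambda_j)}$. I am taking this as given from the paper's stated formula for $\Sym^2H$. Together with the invertibility of $2$, it is where the hypothesis that $p$ is odd is used.
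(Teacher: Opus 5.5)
Your proposal is correct and is essentially the paper's proof. You expand $\Theta_n(f)$ as $\sum_i\langle u_i,u_i\rangle h_i^2+2\sum_{i<j}\langle u_i,u_j\rangle h_ih_j$, compare coefficients in the direct-sum decomposition \eqref{eq1b}, and cancel the unit $2$ using that $p$ is odd. The added frame-independence check is harmless but redundant, since the expansion already holds for any choice of representatives.
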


\begin{proof}
By \eqref{eq4a} and the definition of $\Theta_n$, we have
\begin{align*}
\Theta_n(f)
&=\sum_{i=1}^r
\left(\sum_{k=1}^n\langle u_i,e_k\rangle^2\right)h_i^2
+2\sum_{1\le i<j\le r}
\left(\sum_{k=1}^n
\langle u_i,e_k\rangle\langle u_j,e_k\rangle\right)h_ih_j\\
&=\sum_{i=1}^r\langle u_i,u_i\rangle h_i^2
+2\sum_{1\le i<j\le r}\langle u_i,u_j\rangle h_ih_j.
\end{align*}
The lemma follows from \eqref{eq1b}, since $2$ is a unit in $\Z/p^{\min(\lambda_i,\lambda_j)}\Z$ for every $1\le i<j\le r$.
\end{proof}

\subsection{Stable orbit classification and moment computation}\label{Sub42}

We next show that the congruences in \eqref{eq4b} can be lifted to equalities in $\Z_p$. For $u,v\in L_n$ and $k \ge 1$, we write $u\equiv v\pmod{p^k}$ if $u-v\in p^k L_n$.

\begin{lem}\label{lem4d}
Let $f,g\in\Sur(L_n,H)$, and let $(u_1,\ldots,u_r)$ and $(v_1,\ldots,v_r)$ be frames representing $f$ and $g$, respectively. Assume that \eqref{eq4b} holds. Then there are vectors $\wtil{u}_1,\ldots,\wtil{u}_r\in L_n$ such that
\begin{equation} \label{eq4c}
\wtil{u}_i\equiv u_i \pmod{p^{\lambda_i}}, \quad \langle \wtil{u}_i,\wtil{u}_j\rangle
=\langle v_i,v_j\rangle \quad (1\le i,j \le r).
\end{equation}
\end{lem}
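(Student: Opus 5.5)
Under the hypotheses of Lemma \ref{lem4d} I would build the vectors $\wtil u_1,\ldots,\wtil u_r$ by a Hensel-type successive approximation, correcting the Gram matrix one $p$-adic digit at a time. Order the indices so that $\lambda_1\ge\cdots\ge\lambda_r$ (the type ordering). Set $u_i^{(0)}=u_i$ and $E^{(0)}_{ij}=\langle v_i,v_j\rangle-\langle u_i,u_j\rangle$, a symmetric matrix. By \eqref{eq4b}, $E^{(0)}_{ij}\in p^{\min(\lambda_i,\lambda_j)}\Z_p$.

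We look for a correction of the form $\wtil u_i=u_i+\sum_{k} c_{ik}u_k$ (plus, if needed, components orthogonal to all $u_k$). The $u_k$ alone may not suffice, because their Gram matrix $Q=(\langle u_i,u_j\rangle)$ need not be invertible mod $p$. So instead I use the linear independence of $\bar u_1,\ldots,\bar u_r$ in $L_n/pL_n$ together with unimodularity of $L_n$. These give vectors $w_1,\ldots,w_r\in L_n$ forming a dual family: $\langle u_i,w_j\rangle=\delta_{ij}$. This is possible because the $u_i$ extend to a basis of $L_n$, and $L_n\cong \Hom(L_n,\Z_p)$.

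Now seek $\wtil u_i=u_i+\sum_j x_{ij}w_j$ with unknowns $x_{ij}\in p^{\lambda_i}\Z_p$. The Gram equation becomes
$$
x_{ji}+x_{ij}+\sum_{k,l}x_{ik}x_{jl}\langle w_k,w_l\rangle = E_{ij}.
$$
The linear part is $X+X^T$, which maps onto symmetric matrices. Given a symmetric error $E$ with $E_{ij}\in p^{\min(\lambda_i,\lambda_j)}$, choose the triangular solution: $x_{ij}=E_{ij}$ for $i<j$ in an order with $\lambda_i\le\lambda_j$, $x_{ii}=E_{ii}/2$ (here $p$ odd is used), and $x_{ij}=0$ otherwise. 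Then $x_{ij}\in p^{\min(\lambda_i,\lambda_j)}$. If $\lambda_j>\lambda_i$ we need $x_{ij}\in p^{\lambda_i}$, which holds. If instead $\lambda_i>\lambda_j$, put the entry in row $j$, i.e. $x_{ji}=E_{ij}$, which is divisible by $p^{\lambda_j}$. So row $i$ of $X$ is divisible by $p^{\lambda_i}$, and hence $\wtil u_i\equiv u_i\pmod{p^{\lambda_i}}$.

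The quadratic term then produces a new symmetric error $E'_{ij}=-\sum x_{ik}x_{jl}\langle w_k,w_l\rangle$. This is divisible by $p^{\lambda_i+\lambda_j}$, and in particular by $p^{\min(\lambda_i,\lambda_j)+1}$, strictly improving the previous bound. Iterate: at step $s$, the new correction to $u_i$ lies in $p^{\lambda_i+s}L_n$ (more precisely, errors become divisible by $p^{\min(\lambda_i,\lambda_j)+s}$ and row corrections by $p^{\lambda_i+s}$). To rerun the step we need a dual family for the current vectors $u^{(s)}$; since $u^{(s)}\equiv u\pmod p$, the reductions stay independent, and we recompute $w^{(s)}$ each time. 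Everything converges $p$-adically, and the limit vectors satisfy \eqref{eq4c} exactly by continuity of the pairing.

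The main obstacle is keeping the error's divisibility pattern $p^{\min(\lambda_i,\lambda_j)+s}$ and the row-divisibility $p^{\lambda_i+s}$ compatible at every stage. The asymmetric placement of the off-diagonal entries (into the row with the smaller $\lambda$) is the key point. I would verify carefully that the quadratic error really gains a factor of $p$ even when $\lambda_i=\lambda_j$; it does, since it has size $p^{2\lambda}$ with $\lambda\ge1$. Alternatively, I would just cite Hensel's lemma for the smooth map $(\wtil u_i)\mapsto$ Gram matrix, whose differential is surjective at frames with independent reductions mod $p$ because $p\ne2$.
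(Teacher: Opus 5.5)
Your argument is correct and is essentially the paper's proof. Both use successive approximation in which each off-diagonal error is absorbed into the frame vector with the smaller $\lambda$ and the diagonal error is halved (this is where $p$ odd is used), so the correction to $u_i$ stays in $p^{\lambda_i+s}L_n$. The only difference is that the paper solves the linearized system modulo $p$ one digit at a time, by choosing $\bar z_j$ with prescribed pairings $\langle \bar u_i,\bar z_j\rangle$, whereas you solve it exactly over $\Z_p$ with a dual family $w_j$, which makes the new error quadratic rather than one power of $p$ better.
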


\begin{proof}
Set $u_i^{(0)}=u_i$. We inductively construct the vectors $u_i^{(m)}\in L_n$ ($m=0, 1, \ldots$) such that
$$
u_i^{(m)} \equiv u_i\pmod{p^{\lambda_i}}
$$
for all $1 \le i \le r$ and
$$
D_{ij}^{(m)}:=\langle v_i,v_j\rangle-\langle u_i^{(m)},u_j^{(m)}\rangle \in p^{\min(\lambda_i,\lambda_j)+m}\Z_p
$$
for all $1\le i,j\le r$. The case $m=0$ follows from \eqref{eq4b}. 

Assume that $u_i^{(m)}$ have been constructed for each $1 \le i \le r$. For vectors $z_1,\ldots,z_r\in L_n$ that will be specified below, let
$$
u_i^{(m+1)}=u_i^{(m)}+p^{\lambda_i+m}z_i.
$$
Define the elements $a_i \in \F_p$ ($1 \le i \le r$), $b_{ij} \in \F_p$ ($1 \le i<j \le r$) by
$$
a_i:=\frac{D_{ii}^{(m)}}{p^{\lambda_i+m}} \;\; \bmod{p},\quad b_{ij}:=\frac{D_{ij}^{(m)}}{p^{\lambda_j+m}} \;\; \bmod{p}.
$$
Note that $\lambda_1 \ge \cdots \ge \lambda_r$ so $\min(\lambda_i,\lambda_j)=\lambda_j$ if $i<j$. By the definition of $u_i^{(m+1)}$,
$$
D_{ij}^{(m)}-D_{ij}^{(m+1)}
=p^{\lambda_i+m}\langle z_i,u_j^{(m)}\rangle +p^{\lambda_j+m}\langle u_i^{(m)},z_j\rangle +p^{\lambda_i+\lambda_j+2m}\langle z_i,z_j\rangle.
$$
Since $u_i^{(m)}\equiv u_i\pmod{p^{\lambda_i}}$, $u_i^{(m)}$ and $u_i$ have the same reduction modulo $p$. Thus
\begin{align*}
\frac{D_{ij}^{(m)}-D_{ij}^{(m+1)}}{p^{\lambda_j+m}} \;\; \bmod p
&=\begin{cases}
\langle\bar{u}_i,\bar{z}_j\rangle, & \lambda_i>\lambda_j,\\
\langle\bar{u}_i,\bar{z}_j\rangle+\langle\bar{u}_j,\bar{z}_i\rangle, & \lambda_i=\lambda_j
\end{cases}
\;\; (1 \le i<j \le r),\\
\frac{D_{ii}^{(m)}-D_{ii}^{(m+1)}}{p^{\lambda_i+m}} \;\; \bmod p
&=2\langle\bar{u}_i,\bar{z}_i\rangle \;\; (1 \le i \le r).
\end{align*}

Set $x_{ij}=b_{ij}$ for $i<j$, $x_{ij}=0$ for $i>j$, and $x_{ii}=a_i/2$. Since $\bar{u}_1, \ldots, \bar{u}_r$ are linearly independent in $L_n/pL_n$, the map
$$
L_n/pL_n\to\F_p^r,\quad \bar{z}\mapsto (\langle\bar{u}_1,\bar{z}\rangle,\ldots,
\langle\bar{u}_r,\bar{z}\rangle)
$$
is surjective. Thus for each $1 \le j \le r$, there exists $\bar{z}_j\in L_n/pL_n$ such that $\langle\bar{u}_i,\bar{z}_j\rangle=x_{ij}$ ($1 \le i \le r$). Choose any lift $z_j\in L_n$ of $\bar{z}_j$. With these choices,
$$
u_i^{(m+1)}\equiv u_i^{(m)}+p^{\lambda_i+m}z_i \equiv u_i \pmod{p^{\lambda_i}}
$$
and $D_{ij}^{(m+1)} \in p^{\min(\lambda_i,\lambda_j)+m+1}\Z_p$, which completes the construction of $u_i^{(m+1)}$. 

Now for each $1 \le i \le r$, the sequence $(u_i^{(m)})_{m\ge0}$ converges to some $\wtil{u}_i\in L_n$. Since $D_{ij}^{(m)} \in p^{\min(\lambda_i, \lambda_j)+m}\Z_p$, $D_{ij}^{(m)}$ converges to $0$ as $m \to \infty$. Thus the vectors $\wtil{u}_1, \ldots, \wtil{u}_r\in L_n$ satisfy \eqref{eq4c}.
\end{proof}

Since the map $\Theta_n$ is invariant under the action of $\mO_n(\Z_p)$, it induces a map
\begin{equation} \label{eq4d}
\mO_n(\Z_p)\backslash\Sur(L_n,H) \to \Sym^2H.
\end{equation}

\begin{thm}\label{thm4e}
Let $H$ be a finite abelian $p$-group. Then for all sufficiently large $n$, \eqref{eq4d} is a bijection. Moreover, if $n>2d(H)$, then every $\mO_n(\Z_p)$-orbit in $\Sur(L_n,H)$ is a single $\SO_n(\Z_p)$-orbit.
\end{thm}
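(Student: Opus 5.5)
Surjectivity of \eqref{eq4d} for large $n$ is Lemma \ref{lem4b}, so the work is injectivity: if $f,g\in\Sur(L_n,H)$ satisfy $\Theta_n(f)=\Theta_n(g)$, we must produce $U\in\mO_n(\Z_p)$ with $g=f\circ U$. Take frames $(u_i)$ for $f$ and $(v_i)$ for $g$. By Lemma \ref{lem4c}, the congruences \eqref{eq4b} hold. By Lemma \ref{lem4d}, we may replace $(u_i)$ by another frame $(\wtil u_i)$ for $f$ whose Gram matrix equals that of $(v_i)$ exactly in $\Z_p$. So the problem reduces to a Witt-type extension statement. Given two $r$-tuples in $L_n$ with linearly independent reductions mod $p$ and identical Gram matrices, we want $U\in\mO_n(\Z_p)$ with $U^T\wtil u_i=v_i$ for all $i$. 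Then $(U^T\wtil u_i)$ is a frame for $f\circ U$ equal to $(v_i)$, giving $f\circ U=g$.

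For the extension step, let $N=\mathrm{span}(\wtil u_i)$ and $N'=\mathrm{span}(v_i)$. Both are direct summands of $L_n$ of rank $r$, since their reductions are independent. The map $\wtil u_i\mapsto v_i$ is an isometry $\sigma:N\to N'$, but the induced form on $N$ may be degenerate mod $p$. I would handle this by embedding $N$ into a unimodular sublattice. Choose $w_1,\dots,w_r\in L_n$ with $\langle \wtil u_i,w_j\rangle=\delta_{ij}$, using the unimodularity of $L_n$ and the primitivity of $N$. Then adjust the $w_j$ by elements of $N$ so that the Gram matrix of $\wtil u_1,\dots,\wtil u_r,w_1,\dots,w_r$ takes a standard form, for example $\begin{pmatrix}G&I\\ I&0\end{pmatrix}$; this is possible since $p$ is odd. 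Such a span $M$ is unimodular of rank $2r$. Do the same for $N'$ to get $M'$, and extend $\sigma$ to an isometry $M\to M'$. Both $M$ and $M'$ split off orthogonally, $L_n=M\perp M^\perp=M'\perp M'^\perp$. The complements are unimodular of rank $n-2r$ with the same discriminant and Hasse invariant, because $M\cong M'$ and unimodular $\Z_p$-lattices for odd $p$ are classified by rank and discriminant. Hence $M^\perp\cong M'^\perp$, and gluing gives $U$. This requires $n\ge 2r$, so the argument works for all large $n$.

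For the second claim, with $n>2d(H)=2r$ the complement $M'^\perp$ has rank $\ge1$ and is unimodular, so it contains a vector $e$ with $\langle e,e\rangle\in\Z_p^\times$. The reflection $s_e$ lies in $\mO_n(\Z_p)$, has determinant $-1$, and fixes $N'$ pointwise. If $U$ has determinant $-1$, replace it by $U s_e$ (with the transpose convention arranged so that $s_e$ fixes the $v_i$). Then $f\circ U s_e=g\circ s_e=g$, since $g$ depends only on pairing with the $v_i$. So the $\mO_n(\Z_p)$-orbit is a single $\SO_n(\Z_p)$-orbit.

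The main obstacle I expect is the extension step: normalizing $\wtil u_i$ and $w_j$ simultaneously inside a possibly degenerate setting, and checking that $M\cong M'$ compatibly with $\sigma$. If that becomes messy, the alternative is to invoke Witt's theorem over $\Q_p$ together with a Hensel-type integrality argument, or to cite the standard result that an isometry between primitive sublattices of a unimodular lattice extends when the corank is large.
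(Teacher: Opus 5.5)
\textbf{There is a genuine gap, at the central step of your own route: the construction of $M$.} Your reduction to equal Gram matrices (Lemmas~\ref{lem4b}, \ref{lem4c}, \ref{lem4d}) is sound. So is the endgame: glue an isometry $M\to M'$ extending $\sigma$ with an isometry $M^\perp\cong M'^\perp$ obtained from rank and discriminant. But the construction of $M$ fails as written.

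\begin{itemize}
\item Replacing $w_j$ by $w_j+\sum_k a_{jk}\wtil u_k$ changes $\langle\wtil u_i,w_j\rangle$ by $(GA^T)_{ij}$. This destroys the identity block unless $GA^T=0$. The familiar completion $w_j\mapsto w_j-\tfrac12\sum_k S_{jk}\wtil u_k$ (with $S$ the Gram matrix of the $w_j$) only works when $N$ is totally isotropic.
\item The first choice of dual vectors need not span a unimodular lattice of rank $2r$. Its Gram determinant is $\pm\det(I-SG)$, and for $r=1$, $\wtil u_1=e_1$ you may have picked $w_1=e_1$, giving a rank-one span.
\end{itemize}

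Producing $M\supseteq N$ with Gram matrix $\begin{pmatrix}G&I\\ I&0\end{pmatrix}$, compatibly for $N$ and $N'$, is exactly the Witt-type extension content. So the injectivity argument has a hole at its key step.

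It can be repaired as follows:
\begin{enumerate}
\item Split $N=N_1\perp N_0$ with $N_1$ unimodular and the Gram matrix $G_0$ of $N_0$ divisible by $p$, and transport this splitting to $N'$ by $\sigma$.
\item Split $N_1$ off $L_n$ and choose vectors $w_j$ dual to a basis of $N_0$ inside $N_1^\perp$. Their span with $N_0$ is now unimodular, since its Gram matrix is congruent mod $p$ to $\begin{pmatrix}0&I\\ I&S\end{pmatrix}$.
\item Kill $S$ by a $p$-adic successive approximation whose quadratic error term is divisible by $p$.
\end{enumerate}
Your complement argument then applies to $M=N_1\perp\mathrm{span}(N_0,w)$. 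The paper avoids all of this by doing what you list as a fallback: once the Gram matrices agree, it applies James's extension theorem directly to the isometry $M_u\to M_v$ of primitive sublattices.

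\textbf{Second claim.} The reflection trick is fine, but you take $e$ from $M'^\perp$, and you have only asserted that $M'$ exists for large $n$. The claim is for every $n>2d(H)$. Also, $n\ge 2r$ is not enough for your $M$: for $n=2r$, $r$ odd and $p\equiv 3\pmod 4$, the lattice $\Z_p^{2r}$ is not hyperbolic.

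The paper's argument needs no $M'$. The space $\ker\bar f$ has dimension $n-r>r=\dim(\ker\bar f)^\perp$, so it is not totally isotropic, and since $p$ is odd it contains $w$ with $\langle w,w\rangle\neq0$. Lifting $w$ to $v\in\ker f$ gives a reflection $R_v$ of determinant $-1$ that fixes $f$. The same dimension count applied to $\bar N'^\perp$, which is the reduction of $N'^\perp$, would give your vector $e\in N'^\perp$ of unit norm directly. That decouples the second claim from the construction of $M'$.
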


\begin{proof}
By Lemma \ref{lem4b}, the map is surjective for all sufficiently large $n$. Suppose that $\Theta_n(f)=\Theta_n(g)$ for $f,g \in \Sur(L_n,H)$ and choose frames $(u_1,\ldots,u_r)$ and $(v_1,\ldots,v_r)$ representing $f$ and $g$, respectively. By Lemmas \ref{lem4c} and \ref{lem4d}, we may replace each $u_i$ by an element of the coset $u_i+p^{\lambda_i}L_n$ such that $\langle u_i,u_j\rangle=\langle v_i,v_j\rangle$ for all $1 \le i,j \le r$. Since $\bar{u}_1, \ldots, \bar{u}_r$ and $\bar{v}_1, \ldots, \bar{v}_r$ are linearly independent, the $\Z_p$-submodules $M_u=\sum_{i=1}^r\Z_p u_i$ and $M_v=\sum_{i=1}^r\Z_p v_i$ are direct summands of $L_n$ so these are primitive sublattices.

By the relations $\langle u_i,u_j\rangle=\langle v_i,v_j\rangle$, the $\Z_p$-linear map $\alpha:M_u \to M_v$ ($u_i \mapsto v_i$) is an isometry with respect to the quadratic form $q(x)=\langle x,x\rangle$ on $L_n$. By James's extension theorem \cite[pp. 645--646]{Jam70}, $\alpha$ extends to an isometry of $L_n$ which is represented by a matrix $P \in\mO_n(\Z_p)$.
Set $Q=P^T=P^{-1}$. Then $Q^Tu_i=Pu_i=v_i$ for every $1\le i\le r$. By the description of frames under precomposition given above, $(v_1,\ldots,v_r)$ is a frame representing $f\circ Q$, which gives $g=f\circ Q$. Hence \eqref{eq4d} is injective, and therefore bijective.

Now assume that $n>2d(H)$. Fix $f\in\Sur(L_n,H)$, let $\bar{f}:L_n/pL_n\to H/pH$ be the reduction of $f$ modulo $p$ and set $W=\ker(\bar{f})$. Since $\bar{f}$ is surjective, we have
$$
\dim_{\F_p}W=n-d(H)>d(H)=\dim_{\F_p}W^\perp.
$$
Therefore $W$ is not a subset of $W^\perp$, so $\langle w,w \rangle\ne0$ for some $w\in W$. Indeed, if $\langle w,w \rangle=0$ for all $w \in W$, then 
$$
2\langle w_1,w_2 \rangle = \langle w_1+w_2,w_1+w_2 \rangle-\langle w_1,w_1 \rangle-\langle w_2,w_2 \rangle=0
$$
for all $w_1, w_2 \in W$ and $p$ is odd so $W \subseteq W^\perp$. Choose a lift $v_0\in L_n$ of $w$. Since $\bar{f}(w)=0$, we have $f(v_0)=ph$ for some $h\in H$. Choose $y\in L_n$ such that $f(y)=h$ and set $v=v_0-py$. Then $v\in\ker f$ and $\bar{v}=\bar{v_0}=w$, so $\langle v,v\rangle\in\Z_p^\times$.

The reflection $R_v : L_n \to L_n$ given by
$$
R_v(x)=x-2\frac{\langle x,v\rangle}{\langle v,v\rangle}v
$$
corresponds to a matrix in $\mO_n(\Z_p)$ with determinant $-1$. Since $v\in\ker f$, we have $f\circ R_v=f$. Thus the stabilizer of $f$ in $\mO_n(\Z_p)$ contains an element of determinant $-1$, so the $\mO_n(\Z_p)$-orbit of $f$ is equal to its $\SO_n(\Z_p)$-orbit.
\end{proof}

\begin{proof}[Proof of Theorem \ref{thm1a}(a)]
Choose a sufficiently large $n>2d(H)$ so that Theorem \ref{thm4e} applies. As in the proof of Proposition \ref{prop2b}, for every $A\in\mO_n(\Z_p)$, we have
$$
\#\Sur(\cok(A-I_n),H)=\#\{f\in\Sur(L_n,H):fA=f\}.
$$
By Theorem \ref{thm4e}, the action of $\mO_n(\Z_p)$ on $\Sur(L_n,H)$ has exactly $|\Sym^2H|$ orbits, and each $\mO_n(\Z_p)$-orbit is also a single $\SO_n(\Z_p)$-orbit. 

Let $G$ be either $\mO_n(\Z_p)$ or $\SO_n(\Z_p)$, and let $C_n$ be a Haar-random matrix in $G$. Let $\Omega_1, \ldots, \Omega_s$ ($s=|\Sym^2H|$) be the $\mO_n(\Z_p)$-orbits (equivalently, $\SO_n(\Z_p)$-orbits) in $\Sur(L_n,H)$. Applying Lemma \ref{lem4a} with $\Omega=\Omega_i$ and the trivial character $\psi=1$, we obtain
\begin{equation}\label{eq:cf1}
\bE(\# \{f\in \Omega_i: fC_n=f\})=\int_G \#\Fix_{\Omega_i}(g)\,dg=1.
\end{equation}
Therefore
\begin{equation}\label{eq:cf2}
\bE(\#\Sur(\cok(C_n-I_n),H))=\sum_{i=1}^{s} \bE(\# \{f\in \Omega_i: fC_n=f\})=|\Sym^2H|.    
\end{equation}
Since $\mO_n(\Z_p)=\SO_n(\Z_p)\sqcup\mO_n^-(\Z_p)$ and each component has Haar measure $1/2$, the same result holds when $C_n$ is a Haar-random matrix in $\mO_n^-(\Z_p)$.
\end{proof}

\section{Symplectic support and odd‑prime limiting laws} \label{Sec5}

The stable $H$-moments obtained in Theorem \ref{thm1a}(a) grow too rapidly to determine a unique distribution: distinct random finitely generated $\Z_p$-modules can have the same $H$-moments $|\Sym^2H|$ for every finite abelian $p$-group $H$. To determine the limiting distribution of $\cok(A_n-I_n)$, we need additional information on its support.
We follow the strategy of Nguyen and Wood \cite{NW25}. For random alternating matrices over $\Z_p$, they first identified the support of the cokernel and then combined this with the limiting $H$-moments $|\Sym^2H|$ to determine the limiting distribution using \cite[Theorem 4.1]{NW25}. We apply this theorem after identifying the support of $\cok(A_n-I_n)$.

First we construct a canonical perfect skew-symmetric pairing on the torsion subgroup of $\cok(A_n-I_n)$. The construction is valid for every prime $p$; when $p$ is odd, the pairing is alternating.
If $T$ is an endomorphism of a free $\Z_p$-module $L$ of finite rank and $V=L\otimes_{\Z_p}\Q_p$, then
\begin{equation}\label{eq5a}
\cok(T)_{\tors}=(L\cap T(V))/T(L).
\end{equation}
Indeed, the class of $x\in L$ in $L/T(L)$ is torsion if and only if $p^mx\in T(L)$ for some $m\ge0$, which is equivalent to $x\in T(V)$.

\begin{thm}\label{thm5a}
Let $L$ be a unimodular symmetric $\Z_p$-lattice and $A\in\mO(L)$. Then $\cok(A-I)_{\tors}$ admits a perfect skew-symmetric pairing
$$
\lambda_A:\cok(A-I)_{\tors}\times\cok(A-I)_{\tors} \rightarrow\Q_p/\Z_p.
$$
\end{thm}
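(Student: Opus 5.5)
Write $T=A-I$, $V=L\otimes_{\Z_p}\Q_p$, and extend $\langle\cdot,\cdot\rangle$ $\Q_p$-bilinearly to $V$. By \eqref{eq5a}, $\cok(T)_{\tors}=(L\cap T(V))/T(L)$. The plan is the standard linking-form construction: for $x,y\in L\cap T(V)$, choose $y'\in V$ with $Ty'=y$ and set
$$
\lambda_A(\bar x,\bar y)=\langle x,y'\rangle \bmod \Z_p .
$$

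The first task is well-definedness. The element $y'$ is determined only modulo $\ker(T|_V)$, so we need $\langle x,\ker T\rangle=0$ for $x\in T(V)$. This holds because $T^T=A^T-I=A^{-1}-I=-A^{-1}T$. Hence for $x=Tz$ and $k\in\ker T$,
$$
\langle Tz,k\rangle=\langle z,T^Tk\rangle=-\langle z,A^{-1}Tk\rangle=0 .
$$
Next, $\lambda_A$ must vanish when $x\in T(L)$ or $y\in T(L)$. If $y=Tw$ with $w\in L$, take $y'=w$; then $\langle x,w\rangle\in\Z_p$. If $x=Tz$ with $z\in L$, the same adjoint identity gives
$$
\langle Tz,y'\rangle=-\langle z,A^{-1}Ty'\rangle=-\langle z,A^{-1}y\rangle\in\Z_p .
$$

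For skew-symmetry, write $x=Tx'$ and $y=Ty'$, so that $\lambda(\bar x,\bar y)=\langle Tx',y'\rangle$ and $\lambda(\bar y,\bar x)=\langle Ty',x'\rangle$. Expanding,
$$
\langle Tx',y'\rangle+\langle Ty',x'\rangle=\langle Ax',y'\rangle+\langle x',Ay'\rangle-2\langle x',y'\rangle .
$$
Since $\langle Ax',Ay'\rangle=\langle x',y'\rangle$, this equals
$$
-\langle Ax'-x',\,Ay'-y'\rangle=-\langle x,y\rangle\in\Z_p ,
$$
so $\lambda$ is skew-symmetric modulo $\Z_p$.

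For perfectness, both sides are finite of the same order, so injectivity of $\bar x\mapsto\lambda(\bar x,-)$ suffices. Suppose $\langle x,y'\rangle\in\Z_p$ for all $y'\in T^{-1}(L)\subset V$, i.e.\ for all $y'\in V$ with $Ty'\in L$. The aim is to show $x\in T(L)$. By unimodularity, $x$ lies in the dual of $T^{-1}(L)$ (dual taken inside $V$). Let $K=\ker T$ and $W=T(V)$. The adjoint relation gives $W=K^\perp$. The dual of $T^{-1}(L)$ should be computed via the adjoint $T^T=-A^{-1}T$: on $V$ the dual of $T^{-1}(L)$ is $T^T$ applied to the dual of $L$, intersected with $K^\perp$. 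Because $L$ is self-dual, this is $T^T(L)=A^{-1}T(L)=T(A^{-1}L)=T(L)$, using that $A^{-1}$ commutes with $T$ and preserves $L$.

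This perfectness step is the main obstacle when $T$ is not invertible. There $T^{-1}(L)$ is not a lattice, since it contains $K$. I would handle it by passing to $\bar V=V/K$, on which $T$ induces an injective map $\bar T:\bar V\to W$ with image $W$. The form pairs $W$ with $\bar V$ perfectly, because $W=K^\perp$. One then applies the duality $(\bar T^{-1}L')^{*}=\bar T^{*}(L'^{*})$ for the lattice $L'=L\cap W$ in $W$ and its image in $\bar V$. This requires checking that $(L\cap W)^*$ in $\bar V$ is the image of $L$, which follows from unimodularity and the primitivity of $L\cap W$, so the earlier rank argument extends.
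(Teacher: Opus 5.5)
Your proposal is correct and follows essentially the paper's route. You use the same linking form; your convention pairs $x$ with a preimage of $y$, which differs from the paper's $\langle z,y\rangle$ (with $Tz=x$) only by a sign. You also use the same identity $T^{*}=-A^{-1}T$ for well-definedness and $W=K^{\perp}$, the same skew-symmetry computation, and the same key perfectness step: $L\cap W$ is a direct summand of $L$, so a $\Z_p$-valued functional on it is represented by some $\ell\in L$ via unimodularity, and the discrepancy lies in $W^{\perp}=K$. Your passage to $V/K$ with the dual-lattice identity $(\bar T^{-1}L')^{*}=\bar T^{*}((L')^{*})$ only repackages that argument, which the paper carries out directly by writing $x=Tz$, finding $\ell\in L$ with $z-\ell\in K$, and concluding $x=T\ell\in T(L)$.
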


\begin{proof}
Set $T=A-I$, $V=L\otimes_{\Z_p}\Q_p$, $K=\ker(T:V\to V)$ and $W=T(V)$. Let $T^*$ denote the adjoint of $T$ with respect to the bilinear form on $V$. Then $A \in \mO(L)$ so $A^*=A^{-1}$, which gives $T^*=A^{-1}-I=-A^{-1}T$. Hence $W=(\ker T^*)^\perp=K^\perp$, and \eqref{eq5a} implies that
$$
\cok(T)_{\tors}=(L\cap W)/T(L).
$$

For $x,y\in L\cap W$, choose $z\in V$ such that $Tz=x$ and define
$$
\lambda_A([x],[y]):=(\langle z,y\rangle \;\bmod{\Z_p}) \in \Q_p/\Z_p,
$$
where $[x]$ denotes the class of $x \in L \cap W$ in $(L\cap W)/T(L)$. We verify that the pairing $\lambda_A$ is well-defined. First, if $z'\in V$ satisfies $Tz'=x$, then $z'-z\in K$. Since $y\in W=K^\perp$, we have $\langle z'-z,y\rangle=0$ so the value is independent of the choice of $z$.
Next, if we replace $x$ by $x+T\ell$ ($\ell\in L$), then we may replace $z$ by $z+\ell$. Since
$$
\langle z+\ell,y\rangle-\langle z,y\rangle=\langle\ell,y\rangle\in\Z_p,
$$
the pairing is independent of the choice of $x$. Finally, replacing $y$ by $y+T\ell$ ($\ell\in L$) gives
$$
\langle z,y+T\ell\rangle-\langle z,y\rangle
=\langle z,T\ell\rangle=\langle T^*z,\ell\rangle=\langle -A^{-1}x,\ell\rangle \in \Z_p,
$$
so the pairing is independent of the choice of $y$.

The identity $(T^*+I)(T+I)=I$ gives $T^*+T=-T^*T$. For $x=Tz$ and $y=Tw$,
$$
\lambda_A([x],[y])+\lambda_A([y],[x])
=\langle z,Tw\rangle+\langle Tz,w\rangle \; \bmod{\Z_p}
$$
and
$$
\langle z,Tw\rangle+\langle Tz,w\rangle
=\langle(T^*+T)z,w\rangle=\langle -T^*Tz,w\rangle
=-\langle Tz,Tw\rangle=-\langle x,y\rangle \in \Z_p
$$
so $\lambda_A([x],[y])+\lambda_A([y],[x])=0$ in $\Q_p/\Z_p$. Hence $\lambda_A$ is skew-symmetric.

Finally, suppose that $[x]\in\cok(T)_{\tors}$ satisfies $\lambda_A([x],[y])=0$ for every $[y] \in\cok(T)_{\tors}$. Write $x=Tz$ with $z\in V$. By the definition of $\lambda_A$, we have $\langle z,L\cap W\rangle\subseteq\Z_p$. Since $W$ is a $\Q_p$-subspace of $V$, the submodule $L\cap W$ is saturated in $L$ so it is a direct summand of $L$. Therefore the $\Z_p$-linear map
$$
L \cap W \to \Z_p,\quad y\mapsto\langle z,y\rangle
$$
extends to $L$. Since $L$ is unimodular, there exists $\ell\in L$ such that
$$
\langle z-\ell,L\cap W\rangle=0.
$$
The $\Q_p$-span of $L\cap W$ is $W$, so $z-\ell\in W^\perp=K$. Hence $x=T\ell \in T(L)$ and $[x]=0$ in $\cok(T)_{\tors}$. 
We conclude that the pairing $\lambda_A$ is perfect.
\end{proof}

\begin{cor}\label{cor5b}
Let $p$ be an odd prime, $L$ be a unimodular symmetric $\Z_p$-lattice and $A\in\mO(L)$. Then
$$
\cok(A-I)_{\tors}\cong G\oplus G
$$
for some finite abelian $p$-group $G$.
\end{cor}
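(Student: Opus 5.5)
The plan is to combine Theorem \ref{thm5a} with the fact that, for odd $p$, a skew-symmetric pairing is automatically alternating, and then invoke the classification of finite abelian $p$-groups with a perfect alternating pairing.

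First, apply Theorem \ref{thm5a} to get the perfect skew-symmetric pairing $\lambda_A$ on $M:=\cok(A-I)_{\tors}$. Since $M$ is a finite $p$-group with $p$ odd, $2$ acts invertibly on $M$. Skew-symmetry gives $2\lambda_A(x,x)=0$ for every $x\in M$. Hence $\lambda_A(x,x)=\lambda_A(2\cdot\tfrac{1}{2}x,\tfrac12 x)\cdot$ — more simply, $\lambda_A(x,x)=2\lambda_A(\tfrac12 x,x)$, where $\tfrac12 x$ denotes $2^{-1}x\in M$. Therefore
$$
\lambda_A(x,x)=4\lambda_A(\tfrac12x,\tfrac12x)
$$
and, applying skew-symmetry to $\tfrac12 x$, the element $\lambda_A(x,x)$ is killed by $2$. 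It also lies in the image of $M$, which is killed by a power of $p$, so it is $0$. Thus $\lambda_A$ is alternating. This is the observation already recorded in Section \ref{Sub12}: for odd $p$, every skew-symmetric pairing is alternating.

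Next, $M$ carries a perfect alternating pairing, so it is symplectic. By \cite[Proposition 2]{Del01}, recalled in Section \ref{Sub12}, $M$ is therefore square, i.e. $M\cong G\oplus G$ for some finite abelian $p$-group $G$.

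If one preferred to avoid citing Deligne, the only substantive step would be the standard symplectic decomposition, which I would carry out as follows. Choose $x$ of maximal order $p^k$ in $M$. Perfectness gives $y$ with $\lambda_A(x,y)$ of exact order $p^k$. Then $\langle x,y\rangle\cong(\Z/p^k\Z)^2$, the pairing restricts perfectly to it, and $M=\langle x,y\rangle\oplus\langle x,y\rangle^\perp$; induction on $|M|$ finishes. The main point to check in that decomposition is that $\langle x,y\rangle\cap\langle x,y\rangle^\perp=0$, which uses that $p^k$ is the exponent of $M$. Given the cited result, however, the proof is essentially immediate.
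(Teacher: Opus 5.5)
Your proposal is correct and is the paper's proof: Theorem \ref{thm5a} gives a perfect skew-symmetric pairing on $\cok(A-I)_{\tors}$. This pairing is alternating because $p$ is odd, and Deligne's result then shows the group is square. The alternating step can be shortened without the detour through $\tfrac12 x$: $2\lambda_A(x,x)=0$, and $p^k\lambda_A(x,x)=\lambda_A(p^kx,x)=0$ for $p^k$ the exponent of the group, so $\lambda_A(x,x)=0$ because $\gcd(2,p^k)=1$.
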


\begin{proof}
By Theorem \ref{thm5a}, $\cok(A-I)_{\tors}$ admits a perfect skew-symmetric pairing $\lambda_A$. Since $p$ is odd, $\lambda_A$ is alternating. Thus $\cok(A-I)_{\tors}$ is a symplectic group, so it is square.
\end{proof}

\begin{proof}[Proof of Theorem \ref{thm1b}]
By Proposition \ref{prop3a} and Corollary \ref{cor5b}, we have
\begin{equation}\label{eq5b}
\cok(A_n-I_n)\cong
\begin{cases}
G\oplus G, & \det A_n=(-1)^n,\\
\Z_p\oplus G\oplus G, & \det A_n=(-1)^{n+1}
\end{cases}
\end{equation}
for some finite abelian $p$-group $G$ almost surely.
First we assume that $\det A_n=(-1)^n$. By Theorem \ref{thm1a}(a), for every finite abelian $p$-group $H$ and all sufficiently large $n$, we have
$$
\bE(\#\Sur(\cok(A_n-I_n),H))=|\Sym^2H|.
$$
Then \cite[Theorem 4.1]{NW25}, applied to \eqref{eq5b}, implies that $\cok(A_n-I_n)\dist\mu_{p,0}$.

Now we assume that $\det A_n=(-1)^{n+1}$. By \eqref{eq5b}, for every positive integer $k$, the reduction of $\cok(A_n-I_n)$ modulo $p^k$ is of the form
$$
\Z/p^k\Z\oplus G/p^kG\oplus G/p^kG,
$$
which is precisely the form required in condition (2) of \cite[Theorem 4.1]{NW25}. Together with the $H$-moments in Theorem \ref{thm1a}(a), the argument in the proof of \cite[Theorem 1.13]{NW25} for the odd-dimensional case can be directly applied. Thus $\cok(A_n-I_n)$ has the same limiting distribution as $\cok(B_{2g+1})$ for a Haar-random matrix $B_{2g+1} \in \Alt_{2g+1}(\Z_p)$, so $\cok(A_n-I_n)\dist\mu_{p,1}$.

Finally, $\mO_n(\Z_p)=\SO_n(\Z_p)\sqcup\mO_n^-(\Z_p)$ and each component has Haar measure $1/2$, so the limiting law of the full orthogonal group $\mO_n(\Z_p)$ is $(1/2)\mu_{p,0}+(1/2)\mu_{p,1}$.
\end{proof}


\section{Dyadic orbit invariants}\label{Sec6}

For the rest of the paper, we fix $p=2$. 
Let
$$
H=\bigoplus_{i=1}^r\Z/2^{\lambda_i}\Z\,h_i,
\quad
\lambda_1\ge\cdots\ge\lambda_r\ge1.
$$
Let $(u_1,\ldots,u_r)$ be a frame representing $f$, so that
$$
f(x)=\sum_{i=1}^r
(\langle u_i,x\rangle \bmod 2^{\lambda_i})h_i.
$$
The function $\Theta_n$ introduced in Section \ref{Sec4} is given by
$$
\Theta_n(f)=\sum_{k=1}^n f(e_k)^2 = \sum_{i=1}^r
\langle u_i,u_i\rangle h_i^2
+
2\sum_{1\le i<j\le r}
\langle u_i,u_j\rangle h_i h_j \in \Sym^2H.
$$
We remark that $\Theta_n(f)$ no longer records enough information when $p=2$. Since the order of $h_i^2 \in \Sym^2H$ is $2^{\lambda_i}$, $\Theta_n(f)$ records the
diagonal Gram entry $\langle u_i,u_i\rangle$ only modulo
$2^{\lambda_i}$.
Although the frame vector $u_i$ itself is only determined modulo
$2^{\lambda_i}L_n$, the quantity
$$
\langle u_i,u_i\rangle \pmod{2^{\lambda_i+1}}
$$
is nevertheless well-defined by the homomorphism $f$. Indeed, replacing
$u_i$ by another representative
$$
u_i'=u_i+2^{\lambda_i}z_i
$$
does not change $f$, while
\begin{align*}
\langle u_i',u_i'\rangle-\langle u_i,u_i\rangle
&=
2^{\lambda_i+1}\langle u_i,z_i\rangle
+
2^{2\lambda_i}\langle z_i,z_i\rangle.
\end{align*}
Thus
$$
\langle u_i',u_i'\rangle
\equiv
\langle u_i,u_i\rangle
\pmod{2^{\lambda_i+1}}.
$$
Consequently, there is one additional $2$-adic digit of diagonal Gram data attached intrinsically to $f$ which is invisible in $\Theta_n(f)\in \Sym^2H$. Whitehead's universal quadratic group provides the appropriate refinement that retains this additional information.

\subsection{Whitehead’s universal quadratic group} \label{Sub61}
For abelian groups $H$ and $A$, a map $q: H \to A$ is said to be \emph{homogeneous quadratic} if the map
$$
B_q(x,y) = q(x+y) - q(x)-q(y)
$$
is biadditive and $q(kx) = k^2q(x)$ for all $k \in \Z$. 
\begin{defn}
Let $H$ be an abelian group. The \emph{Whitehead group} $\Gamma(H)$ is an abelian group equipped with a universal homogeneous quadratic map
$$
\gamma_H : H \to \Gamma(H)
$$
with the property that for every homogeneous quadratic map $q: H \to A$, there is a unique group homomorphism $\tilde{q}: \Gamma(H) \to A$ satisfying $q= \tilde{q} \circ \gamma_H$. We often abbreviate $\gamma = \gamma_H$ and write
$$
[x,y] = \gamma(x+y) - \gamma(x) - \gamma(y).
$$
The map $(x,y) \mapsto [x,y]$ is then symmetric and biadditive.
\end{defn}
The universal quadratic map $\gamma$ satisfies
\begin{equation}
\label{eq:universal quadratic map}
\gamma(x+y)=\gamma(x)+\gamma(y)+[x,y],\quad
[ax,by]=ab[x,y],\quad
\gamma(ax)=a^2\gamma(x)
\end{equation}
for $a, b \in \Z$. 

We recall the following standard properties of Whitehead's group (see \cite[pp. 16--17]{Bau96}).
For $m \ge 1$,
$$
\Gamma(\Z/2^m\Z) \cong \Z/2^{m+1}\Z,
$$
and for abelian groups $A$ and $B$, there is a canonical decomposition
$$
\Gamma(A \oplus B) \cong \Gamma(A) \oplus \Gamma(B) \oplus (A \otimes B).
$$
Under this decomposition, the maps from $\Gamma(A)$ and $\Gamma(B)$ to $\Gamma(A \oplus B)$ are induced by the inclusions $A, B \hookrightarrow A \oplus B$, while the $A \otimes B$-summand maps via $a \otimes b \mapsto [(a,0),(0,b)]$. 
Hence, for $$
H = \bigoplus_{i=1}^r \Z/2^{\lambda_i}\Z h_i, \quad \lambda_1 \ge \cdots \ge \lambda_r \ge 1,
$$
we obtain the decomposition
\begin{equation}
\label{eq:gamma-decomposition}
\Gamma(H) \cong \bigoplus_{i=1}^r \Z/2^{\lambda_i +1}\Z \gamma(h_i) \oplus \bigoplus_{i < j} \Z/2^{\min(\lambda_i, \lambda_j)}\Z [h_i, h_j].
\end{equation}
Therefore, it follows that
\begin{equation} \label{eq:gamma-size}
|\Gamma(H)| = 2^r|\Sym^2H|.
\end{equation}


\subsection{The characteristic functional and quadratic Gram invariants}
Let $e_1, \ldots, e_n$ denote the standard basis of $L_n$, and put
$$
\bar{L}_n=L_n/2L_n,\quad
c_n=e_1+\cdots+e_n\in L_n,\quad
\bar{c}_n=c_n\bmod 2L_n\in\bar{L}_n.
$$
For $x = (x_1, \ldots, x_n) \in L_n$, we write $\bar{x}$ for its image in $\bar{L}_n$. The standard bilinear form on $\bar L_n$ is nondegenerate.
By abuse of notation, we also denote it by $\langle\cdot,\cdot\rangle$.
For every $\bar x = (\bar x_1, \ldots, \bar x_n)\in\bar L_n$,
\begin{equation} \label{eq:characteristic-class}
\langle \bar{c}_n, \bar{x}\rangle = \sum_{i=1}^n \bar{x}_i = \sum_{i=1}^n \bar{x}_i^2 = \langle \bar{x}, \bar{x}\rangle.
\end{equation}
In other words, $\bar{c}_n$ represents the functional $\bar{x} \mapsto \langle \bar{x}, \bar{x}\rangle$. For every $U\in\mO_n(\Z_2)$, its reduction
$\bar U\in\mO_n(\F_2)$ fixes $\bar c_n$.

For a surjection $f: L_n \twoheadrightarrow H$, define
$$
q_f:= \sum_{i=1}^n \gamma(f(e_i)) \in \Gamma(H).
$$
Reduction modulo $2$ induces a surjection
$$
\bar{f}: \bar{L}_n \twoheadrightarrow H/2H
$$
and hence an injection
$$
\bar{f}^*: \Hom(H, \F_2) \cong \Hom(H/2H, \F_2) \hookrightarrow \Hom(\bar{L}_n, \F_2).
$$
Let $(u_i)$ be a frame representing $f$. For $\chi \in \Hom(H, \F_2)$, let $\epsilon_i = \chi(h_i)$. Then
$$
(\bar{f}^*\chi)(\bar{x}) = \chi(\bar{f}(\bar{x})) = \sum_{i=1}^r \epsilon_i \langle \bar{u}_i, \bar{x}\rangle = \left\langle \sum_{i=1}^r \epsilon_i \bar{u}_i, \bar{x}\right\rangle.
$$
Identify $\bar{L}_n$ with $\Hom(\bar{L}_n, \F_2)$ via the standard form.
By the definitions of $\bar{f}^*$ and $\bar{c}_n$, and the fact that the standard form is nondegenerate, we immediately obtain the following.
\begin{lem}
\label{lem:barf image equivalent condition}
For $f \in \Sur(L_n, H)$, the following statements are equivalent.
\begin{enumerate}[label=(\arabic*)]
\item
$\bar{c}_n \in \im(\bar{f}^*)$
\item 
There exists $\chi_f \in \Hom(H, \F_2)$ such that 
$$\chi_f(f(x)) \equiv \langle x,x \rangle \pmod{2}$$ for all $x \in L_n$.
\end{enumerate}
Moreover, if these conditions hold, $\chi_f$ is unique because $\bar{f}^*$ is injective, and $\chi_f \neq 0$ since $\bar{c}_n \neq 0$.
\end{lem}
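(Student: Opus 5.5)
The plan is to unwind the identifications already set up before the statement, so the proof reduces to linear algebra over $\F_2$ on $\bar L_n$. Under the standard form, $\bar L_n\cong\Hom(\bar L_n,\F_2)$ via $\bar y\mapsto\langle\bar y,\cdot\rangle$. The computation above shows that $\bar f^*\chi$ corresponds to the vector $\sum_{i}\chi(h_i)\bar u_i$, where $(u_i)$ is a frame representing $f$. By \eqref{eq:characteristic-class}, $\bar c_n$ corresponds to the functional $\bar x\mapsto\langle\bar x,\bar x\rangle$.

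For (1)$\Rightarrow$(2), suppose $\bar c_n=\bar f^*\chi$ for some $\chi\in\Hom(H/2H,\F_2)\cong\Hom(H,\F_2)$. Evaluating at $\bar x$ gives $\chi(\bar f(\bar x))=\langle\bar c_n,\bar x\rangle=\langle\bar x,\bar x\rangle$. Since $\chi(f(x))$ depends only on $f(x)$ mod $2H$, and this is $\bar f(\bar x)$, we get $\chi_f:=\chi$ with $\chi_f(f(x))\equiv\langle x,x\rangle\pmod 2$.

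For (2)$\Rightarrow$(1), the functional $\bar f^*\chi_f$ equals $\bar x\mapsto\langle\bar x,\bar x\rangle=\langle\bar c_n,\bar x\rangle$. By nondegeneracy of the form on $\bar L_n$, this functional is represented by $\bar c_n$, so $\bar c_n\in\im(\bar f^*)$.

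Uniqueness of $\chi_f$ follows because $\bar f^*$ is injective, which holds since $\bar f$ is surjective (Nakayama, as before). Two characters satisfying (2) have the same image $\bar c_n$ and therefore coincide. Finally, $\bar c_n\neq0$ because $n\ge1$ and every coordinate of $\bar c_n$ equals $1$. Hence $\bar f^*\chi_f\ne0$, which forces $\chi_f\ne0$.

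The only point needing care is the identification $\Hom(H,\F_2)\cong\Hom(H/2H,\F_2)$ and the compatibility $\chi(f(x))=\chi(\bar f(\bar x))$. These hold because every homomorphism to $\F_2$ kills $2H$, so there is no real obstacle.
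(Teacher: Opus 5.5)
Your proposal is correct and matches the paper, which records the lemma as an immediate consequence of the definitions of $\bar f^*$ and $\bar c_n$ together with nondegeneracy of the standard form on $\bar L_n$. This is exactly the unwinding you carry out. One small remark: surjectivity of $\bar f$, and hence injectivity of $\bar f^*$, follows directly from surjectivity of $f$, so no appeal to Nakayama's lemma is needed there.
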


\begin{defn}
If the equivalent conditions in Lemma \ref{lem:barf image equivalent condition} hold for $f \in \Sur(L_n, H)$, we say that $f$ is \emph{characteristic} and call $\chi_f$ its \emph{characteristic functional}. Otherwise, we say that $f$ is \emph{noncharacteristic}. 
\end{defn}

\begin{lem}
\label{lem:Dyadic invariance}
Let $f \in \Sur(L_n, H)$. If $U \in \mO_n(\Z_2)$, then 
$$
q_{f\circ U} = q_f.
$$
Moreover, $f \circ U$ is characteristic if and only if $f$ is characteristic, and in that case $\chi_{f\circ U} = \chi_f.$
\end{lem}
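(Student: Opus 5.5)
For the first assertion, I will imitate the computation that shows $\Theta_n$ is invariant, now carried out in $\Gamma(H)$. Fix $U=(a_{ij})\in\mO_n(\Z_2)$ and put $x_i=f(e_i)$. Then $(f\circ U)(e_j)=\sum_i a_{ij}x_i$. Applying the identities \eqref{eq:universal quadratic map} repeatedly, namely $\gamma(x+y)=\gamma(x)+\gamma(y)+[x,y]$, $\gamma(ax)=a^2\gamma(x)$ and biadditivity of $[\cdot,\cdot]$, gives
$$
\gamma\Bigl(\sum_i a_{ij}x_i\Bigr)=\sum_i a_{ij}^2\gamma(x_i)+\sum_{i<i'}a_{ij}a_{i'j}[x_i,x_{i'}].
$$
Summing over $j$ and using $UU^T=I_n$, the coefficients satisfy $\sum_j a_{ij}^2=1$ and $\sum_j a_{ij}a_{i'j}=0$. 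Hence $q_{f\circ U}=q_f$.

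There is a small technical point: the $a_{ij}$ lie in $\Z_2$, not in $\Z$. I will handle this in one of two ways. First, $H$ is killed by $2^N$ for some $N$, so $\Gamma(H)$ is a finite $2$-group; by \eqref{eq:gamma-decomposition} it is killed by $2^{N+1}$. Second, I replace each $a_{ij}$ by an integer congruent to it modulo $2^{N+1}$. The value $\gamma(a x)$ depends only on $a \bmod 2^{N+1}$: if $a'=a+2^{N+1}b$, then
$$
\gamma(a'x)=\gamma(ax)+\gamma(2^{N+1}bx)+[ax,2^{N+1}bx],
$$
and both correction terms vanish because $2^{N+1}bx=0$ in $H$ (indeed $2^N x=0$ already). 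The same kind of check applies to $[a x,a'y]=aa'[x,y]$. The orthogonality relations then hold modulo $2^{N+1}$, and this suffices. Equivalently, $\Gamma(H)$ is a $\Z_2$-module and $\gamma$ is $\Z_2$-homogeneous quadratic, so the integer identities extend to $\Z_2$ by continuity. This reduction is the only point needing care; the rest is formal.

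For the second assertion, suppose $f$ is characteristic with functional $\chi_f$. Then for every $x\in L_n$,
$$
\chi_f((f\circ U)(x))=\chi_f(f(Ux))\equiv\langle Ux,Ux\rangle=\langle x,x\rangle \pmod 2,
$$
because $U$ is orthogonal. So $f\circ U$ satisfies condition (2) of Lemma \ref{lem:barf image equivalent condition} with the same functional, and uniqueness in that lemma gives $\chi_{f\circ U}=\chi_f$. For the converse, apply this argument to $f\circ U$ and $U^{-1}=U^T\in\mO_n(\Z_2)$. One could instead argue through $\bar c_n$, using that $\bar U$ fixes $\bar c_n$ and $\im(\overline{f\circ U}^*)=\bar U^{T}\im(\bar f^*)$, but the direct argument above is shorter.
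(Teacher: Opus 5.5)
Your proposal is correct and follows the paper's proof essentially verbatim: expand $\gamma\bigl(\sum_i a_{ij}x_i\bigr)$ using the universal quadratic identities, sum over $j$ and apply $UU^T=I_n$, then verify the characteristic condition for $f\circ U$ with the same functional $\chi_f$ (using $\langle Ux,Ux\rangle=\langle x,x\rangle$) and get the converse from $U^{-1}$. Your reduction of the $\Z_2$-coefficients to integers modulo $2^{N+1}$ is sound. It makes explicit a step the paper leaves implicit when it asserts the expansion for coefficients in $\Z_2$.
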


\begin{proof}
By \eqref{eq:universal quadratic map}, it follows that for $x_1, \ldots, x_m \in H$, the identity
\begin{equation}\label{eq:universal quadratic expansion}
\gamma\left(\sum_i a_ix_i\right)
=\sum_i a_i^2\gamma(x_i)
+\sum_{i<j}a_ia_j[x_i,x_j]
\end{equation}
holds for all $a_1, \ldots, a_m \in \Z_2$. Now we put $U = (u_{ij})$ and $x_i = f(e_i)$. Then \eqref{eq:universal quadratic expansion} proves that
\begin{align*}
q_{f \circ U} & = \sum_{j} \gamma \left(\sum_i u_{ij}x_i\right) \\
& = \sum_i\left(\sum_j u_{ij}^2\right) \gamma(x_i) + \sum_{i < j}\left(\sum_k u_{ik} u_{jk} \right)[x_i, x_j] \\
& = q_f.
\end{align*}
Moreover, if $f$ is characteristic, then
$$
\chi_f(f(Ux))
\equiv \langle Ux,Ux\rangle
=\langle x,x\rangle \pmod 2.
$$
Thus $f\circ U$ is characteristic with characteristic functional $\chi_f$, so $\chi_{f\circ U}=\chi_f$. The converse follows similarly.
\end{proof}

Now represent $f$ by a frame $(u_i)$, and let $F \in \M_{r \times n}(\Z_2)$ be its row-frame matrix whose $i$-th row is $u_i^T$. Since $f$ is surjective, the vectors $\bar{u}_1,\ldots, \bar{u}_r \in \bar{L}_n$ are linearly independent. Let 
$$
J_f=\sum_{i=1}^r\Z_2u_i\subset L_n
$$
and
$$
\bar J_f=\mathrm{span}_{\F_2}
\{\bar u_1,\ldots,\bar u_r\}\subset\bar L_n.
$$
A sublattice $J\subset L_n$ is called \emph{primitive} if $L_n/J$ is torsion-free, equivalently if $J$ is a direct summand. Since the reduction of $F$ has rank $r$, some $r\times r$ minor of $F$ is a unit. Smith normal form therefore shows that $J_f$ is primitive. The element $q_f\in\Gamma(H)$ can be read directly from the Gram matrix $FF^{T}$. 

\begin{lem}\label{lem:dyadic-gram}
Let $f$ and $g$ be represented by frames $(u_i)$ and $(v_i)$, respectively. Then $q_f=q_g$ if and only if
\begin{align}
\langle u_i,u_i\rangle
&\equiv\langle v_i,v_i\rangle
\pmod{2^{\lambda_i+1}},
\label{eq:dyadic-diagonal}\\
\langle u_i,u_j\rangle
&\equiv\langle v_i,v_j\rangle
\pmod{2^{\min(\lambda_i,\lambda_j)}}
\quad(i\ne j).
\label{eq:dyadic-offdiag}
\end{align}
\end{lem}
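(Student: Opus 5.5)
The plan is to mirror the proof of Lemma \ref{lem4c}, replacing $\Sym^2H$ by the decomposition \eqref{eq:gamma-decomposition} of $\Gamma(H)$. First I will compute $q_f$ explicitly in terms of a frame. Write $f(e_k)=\sum_{i=1}^r a_{ik}h_i$, where $a_{ik}\in\Z_2$ is any lift of $\langle u_i,e_k\rangle \bmod 2^{\lambda_i}$; I will simply take $a_{ik}=\langle u_i,e_k\rangle$, the $k$-th coordinate of $u_i$. The expansion \eqref{eq:universal quadratic expansion} then gives
$$
\gamma(f(e_k))=\sum_{i=1}^r a_{ik}^2\gamma(h_i)+\sum_{i<j}a_{ik}a_{jk}[h_i,h_j].
$$
Summing over $k$ yields
$$
q_f=\sum_{i=1}^r\langle u_i,u_i\rangle\gamma(h_i)+\sum_{1\le i<j\le r}\langle u_i,u_j\rangle[h_i,h_j].
$$
Here the $\Z_2$-coefficients act on $\gamma(h_i)$ through $\Z/2^{\lambda_i+1}\Z$ and on $[h_i,h_j]$ through $\Z/2^{\min(\lambda_i,\lambda_j)}\Z$. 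This action is legitimate because these elements have finite $2$-power order, so \eqref{eq:universal quadratic expansion} with $\Z_2$-coefficients is obtained by approximating with integers.

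One point needs care: $\gamma(ah)=a^2\gamma(h)$ depends only on $a$ modulo $2^{\lambda_i}$ when $h=h_i$, because $(a+2^{\lambda_i}b)^2\equiv a^2 \pmod{2^{\lambda_i+1}}$. So the choice of lift $a_{ik}$ is harmless. This also recovers independently the well-definedness of the diagonal entries modulo $2^{\lambda_i+1}$ noted earlier.

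With the formula in hand, the key step is to invoke \eqref{eq:gamma-decomposition}. The elements $\gamma(h_i)$ of order $2^{\lambda_i+1}$ and $[h_i,h_j]$ of order $2^{\min(\lambda_i,\lambda_j)}$ form a basis of $\Gamma(H)$ in the direct-sum sense. Hence $q_f=q_g$ holds if and only if the coefficients agree componentwise: the diagonal coefficients modulo $2^{\lambda_i+1}$, which is \eqref{eq:dyadic-diagonal}, and the off-diagonal coefficients for $i<j$ modulo $2^{\min(\lambda_i,\lambda_j)}$, which is \eqref{eq:dyadic-offdiag}. The case $i>j$ follows by symmetry of the form.

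Unlike the odd case, no factor of $2$ appears in front of $[h_i,h_j]$, since $[x,y]$ already plays the role of the cross term. So no invertibility of $2$ is needed, and that is exactly why the statement holds at $p=2$.

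The main obstacle I expect is justifying that the canonical decomposition $\Gamma(A\oplus B)\cong\Gamma(A)\oplus\Gamma(B)\oplus(A\otimes B)$, iterated, really identifies the cyclic summands as generated by $\gamma(h_i)$ and $[h_i,h_j]$ with the stated orders. This reduces to the recalled facts that $\Gamma(\Z/2^m\Z)\cong\Z/2^{m+1}\Z$ is generated by $\gamma$ of a generator, and that the tensor summand embeds via $a\otimes b\mapsto[(a,0),(0,b)]$. I will cite \cite{Bau96} for both and argue by induction on $r$.
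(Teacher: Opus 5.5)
Your proposal is correct and follows the paper's proof: expand $q_f$ via the universal quadratic identities to get $\sum_i\langle u_i,u_i\rangle\gamma(h_i)+\sum_{i<j}\langle u_i,u_j\rangle[h_i,h_j]$, then compare coefficients in the decomposition \eqref{eq:gamma-decomposition}, whose summands have orders $2^{\lambda_i+1}$ and $2^{\min(\lambda_i,\lambda_j)}$. Your remarks on $\Z_2$-coefficients and on independence of the chosen lift are sound details that the paper leaves implicit.
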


\begin{proof}
Writing $u_{ik}=\langle u_i,e_k\rangle$, we have $f(e_k)=\sum_i u_{ik}h_i$. By \eqref{eq:universal quadratic map} and using $\sum_k u_{ik}u_{jk}=\langle u_i,u_j\rangle$, we have
\begin{equation}\label{eq:qf-gram-expansion}
q_f=
\sum_i\langle u_i,u_i\rangle\gamma(h_i)
+\sum_{i<j}\langle u_i,u_j\rangle[h_i,h_j].
\end{equation}
In the decomposition \eqref{eq:gamma-decomposition}, the element $\gamma(h_i)$ has order $2^{\lambda_i+1}$, whereas $[h_i,h_j]$ has order $2^{\min(\lambda_i,\lambda_j)}$. Equality of the corresponding coefficients in \eqref{eq:qf-gram-expansion} is therefore exactly the pair of congruences \eqref{eq:dyadic-diagonal} and \eqref{eq:dyadic-offdiag}. 
\end{proof}

\begin{rmk}
In Lemma \ref{lem:dyadic-gram}, the diagonal modulus is one power of $2$ larger than the modulus of $h_i^2$ in $\Sym^2H$; this is the extra information carried by $\Gamma(H)$.
\end{rmk}

\subsection{Exact correction of the Gram matrix}

The congruences in Lemma \ref{lem:dyadic-gram} are weaker than equality of Gram matrices. The next lemma shows that, after changing each frame vector within the congruence class that represents the same homomorphism to $H$, the Gram matrices can be made exactly equal. 

\begin{lem}
\label{lem:weighted-gram}
Let $(u_1,\ldots,u_r)$ and $(v_1,\ldots,v_r)$ be two $r$-tuples in $L_n$ whose reductions modulo $2$ are linearly independent, and put
$$
\bar J_u=\mathrm{span}_{\F_2}\{\bar u_1,\ldots,\bar u_r\},
\quad
\bar J_v=\mathrm{span}_{\F_2}\{\bar v_1,\ldots,\bar v_r\}.
$$
Assume \eqref{eq:dyadic-diagonal} and \eqref{eq:dyadic-offdiag}. Assume also that either
\begin{enumerate}[label=(\roman*)]
\item $\bar c_n\notin\bar J_u\cup\bar J_v$; or
\item $\bar c_n$ lies in both reduced row spans with the same coefficient vector:
$$
\bar c_n=\sum_{i=1}^r\epsilon_i\bar u_i
=\sum_{i=1}^r\epsilon_i\bar v_i,
\quad \epsilon_i\in\F_2.
$$
\end{enumerate}
Then there are vectors
$$
\widetilde u_i\equiv u_i\pmod{2^{\lambda_i}L_n}
$$
such that
$$
\langle\widetilde u_i,\widetilde u_j\rangle
=\langle v_i,v_j\rangle
\quad(1\le i,j\le r).
$$
\end{lem}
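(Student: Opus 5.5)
The plan is to run the same successive-approximation scheme as in the proof of Lemma \ref{lem4d}, now with $2$-adic moduli adapted to Lemma \ref{lem:dyadic-gram}. Set $u_i^{(0)}=u_i$ and construct $u_i^{(m)}\equiv u_i \pmod{2^{\lambda_i}L_n}$ inductively so that
\begin{equation*}
D_{ii}^{(m)}:=\langle v_i,v_i\rangle-\langle u_i^{(m)},u_i^{(m)}\rangle\in 2^{\lambda_i+1+m}\Z_2,\qquad
D_{ij}^{(m)}\in 2^{\min(\lambda_i,\lambda_j)+m}\Z_2\ (i\ne j).
\end{equation*}
The case $m=0$ is exactly \eqref{eq:dyadic-diagonal} and \eqref{eq:dyadic-offdiag}. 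At each step I put $u_i^{(m+1)}=u_i^{(m)}+2^{\lambda_i+m}z_i$ and expand $D^{(m)}-D^{(m+1)}$ as in Lemma \ref{lem4d}. Once $(z_i)$ exist at every step, the limits $\widetilde u_i$ exist and have the required exact Gram matrix, exactly as before.

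\textbf{Off-diagonal entries.} For $i<j$, dividing by $2^{\lambda_j+m}$ and reducing mod $2$ gives $\langle\bar u_i,\bar z_j\rangle$, or $\langle\bar u_i,\bar z_j\rangle+\langle\bar u_j,\bar z_i\rangle$ when $\lambda_i=\lambda_j$. The quadratic term $2^{\lambda_i+\lambda_j+2m}\langle z_i,z_j\rangle$ always dies. So these entries impose the same linear conditions $\langle \bar u_i,\bar z_j\rangle=x_{ij}$ as in the odd case, with $x_{ij}=0$ for $i>j$.

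\textbf{Diagonal entries, generic case.} The change is $2^{\lambda_i+m+1}\langle u_i,z_i\rangle+2^{2\lambda_i+2m}\langle z_i,z_i\rangle$. After dividing by $2^{\lambda_i+m+1}$ this becomes $\langle\bar u_i,\bar z_i\rangle+2^{\lambda_i+m-1}\langle z_i,z_i\rangle$. The crucial gain over Lemma \ref{lem4d} is that no division by $2$ is needed, because the extra digit in \eqref{eq:dyadic-diagonal} absorbs it. Whenever $\lambda_i+m\ge2$ the quadratic term vanishes mod $2$, and the condition is linear: $\langle\bar u_i,\bar z_i\rangle=a_i$. For these $j$, the vector $\bar z_j$ is determined by prescribing $\langle\bar u_i,\bar z_j\rangle$ for all $i$, which is possible since $\bar u_1,\ldots,\bar u_r$ are linearly independent.

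\textbf{Diagonal entries, exceptional case.} The only remaining situation is $m=0$ and $\lambda_j=1$. By \eqref{eq:characteristic-class}, $\langle\bar z_j,\bar z_j\rangle=\langle\bar c_n,\bar z_j\rangle$, so the condition becomes linear again: $\langle\bar u_j+\bar c_n,\bar z_j\rangle=a_j$. So $\bar z_j$ must satisfy prescribed values against the functionals $\bar u_i$ ($i\ne j$) and $\bar u_j+\bar c_n$. These are independent unless $\bar u_j+\bar c_n\in\mathrm{span}\{\bar u_i:i\ne j\}$, that is, unless $\bar c_n=\sum_i\epsilon_i\bar u_i$ with $\epsilon_j=1$.
\begin{itemize}
\item In case (i) this never happens, so the step always goes through.
\item In case (ii) with $\epsilon_j=1$, the system is solvable iff $a_j=\sum_{i\ne j}\epsilon_i x_{ij}$. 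Here the $x_{ij}$ are the already prescribed off-diagonal values, and $a_j=D^{(0)}_{jj}/4 \bmod 2$.
\end{itemize}

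The main obstacle is verifying this compatibility in case (ii). My plan is to derive it from a mod-$4$ (more precisely mod $2^{\lambda_j+1}$) identity for the vector $w=\sum_i\epsilon_iu_i-c_n\in 2L_n$, together with its analogue $w'=\sum_i\epsilon_iv_i-c_n$. I would expand $\langle u_j,u_j\rangle$ versus $\langle u_j,c_n\rangle$ using $\langle x,x\rangle-\langle x,c_n\rangle=\sum_k x_k(x_k-1)\in2\Z_2$, and do the same for $v_j$. The point is that the hypothesis that $\bar c_n$ has the \emph{same} coefficient vector for both frames, combined with \eqref{eq:dyadic-offdiag}, should force $\langle u_j,u_j\rangle-\langle v_j,v_j\rangle\equiv 2\sum_{i\ne j}\epsilon_i(\ldots)$ mod $8$ in exactly the form that matches the required relation.

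If this direct check turns out to be messy, I would try a fallback: handle the $\lambda_j=1$ indices with $\epsilon_j=1$ first, and exploit the freedom in the choice of the $z_i$ for $i$ with $\epsilon_i=1$ to adjust the off-diagonal targets $x_{ij}$ so that the compatibility holds. After this single delicate step ($m=0$), all later steps are purely linear and proceed as above.
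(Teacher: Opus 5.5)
Your reduction is sound as far as it goes. With $u_i^{(m+1)}=u_i^{(m)}+2^{\lambda_i+m}z_i$, every step is linear except $m=0$ at indices with $\lambda_j=1$. There the obstruction is $a_j=\sum_{i\ne j}\epsilon_i x_{ij}$ for each $j\in E:=\{j:\lambda_j=1,\ \epsilon_j=1\}$, where $a_j=D^{(0)}_{jj}/4$ and $b_{ij}=D^{(0)}_{ij}/2$ modulo $2$.

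\textbf{The gap.} Your primary plan is to verify this relation for the targets already fixed as in Lemma \ref{lem4d}, namely $x_{ij}=b_{ij}$ for $i<j$ and $x_{ij}=0$ for $i>j$. This cannot work once $|E|\ge2$. The only available input is that a lift of $\bar c_n$ has all coordinates odd, and odd squares are $1$ modulo $8$. Hence $\langle \sum_i\epsilon_iu_i,\sum_i\epsilon_iu_i\rangle\equiv n\equiv\langle \sum_i\epsilon_iv_i,\sum_i\epsilon_iv_i\rangle\pmod 8$. Expanding and dividing by $4$ gives the single relation
\[
\sum_{j\in E}\Bigl(a_j+\sum_{i:\,\lambda_i\ge2}\epsilon_ib_{ij}\Bigr)+\sum_{\{i,j\}\subseteq E}b_{ij}\equiv0\pmod 2 .
\]
This is one relation summed over $E$, not one relation for each $j\in E$. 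Other lifts $\sum_i(\epsilon_i+2\beta_i)u_i$ give the same congruence.

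Here is a concrete failure. Take $n=4$, $\lambda=(1,1)$, $u_1=e_1+e_2$, $u_2=e_3+e_4$, $v_1=e_1+e_2+2e_3$, $v_2=2e_1+e_3+e_4$. All hypotheses of case (ii) hold with $\epsilon=(1,1)$, and $a_1=a_2=1$, $b_{12}=0$. Your conditions on $\bar z_1$ then read $\langle\bar u_2,\bar z_1\rangle=x_{21}=0$ and $\langle\bar u_1+\bar c_4,\bar z_1\rangle=\langle\bar u_2,\bar z_1\rangle=a_1=1$. These are contradictory, so the $m=0$ step cannot be carried out with those targets.

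\textbf{The repair.} Your fallback is the right idea, but it is exactly the missing argument and must be carried out. All $j\in E$ share $\lambda_j=1$, so the off-diagonal conditions inside $E$ only fix $x_{ij}+x_{ji}=b_{ij}$. Put $s_j:=\sum_{i\in E\setminus\{j\}}x_{ij}$. Replacing $(x_{ij},x_{ji})$ by $(x_{ij}+1,x_{ji}+1)$ toggles both $s_i$ and $s_j$. Hence every vector $(s_j)_{j\in E}$ whose total parity equals $\sum_{\{i,j\}\subseteq E}b_{ij}$ is attainable. The required values are $s_j=a_j+\sum_{i:\lambda_i\ge2}\epsilon_ib_{ij}$, and the displayed relation says precisely that their total parity is correct. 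In the example, $x_{12}=x_{21}=1$ works. With this added, your simultaneous scheme gives a valid alternative proof.

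\textbf{Comparison with the paper.} The paper instead builds $\widetilde u_1,\ldots,\widetilde u_r$ one at a time. It first makes the pairings with earlier vectors exact, then solves a one-variable quadratic for the norm by Hensel's lemma along a freely chosen $z\perp P$. For every index of $E$ except the last, $z$ can be taken with $\langle c_n,z\rangle$ even; this is the sequential counterpart of your freedom in $x_{ij}$ for $i>j$. So the obstruction arises only at the last index of $E$, where the mod $8$ identity alone suffices.
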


\begin{proof}
We construct $\widetilde u_1,\ldots,\widetilde u_r$ in this order. Suppose that $\widetilde u_i$ have been chosen for $i<j$ so that
\begin{equation*}
\widetilde u_i\equiv u_i\pmod{2^{\lambda_i}L_n},
\quad
\langle\widetilde u_i,\widetilde u_k\rangle
=\langle v_i,v_k\rangle\quad(i,k<j).
\end{equation*}
Set $P=\sum_{i<j}\Z_2\widetilde u_i$, and let $\bar P$ be its image in $\bar L_n$. Since the reductions of the $\widetilde u_i$ are linearly independent, the map
\begin{equation*}
L_n\rightarrow\Z_2^{j-1},\quad
z\mapsto(\langle\widetilde u_i,z\rangle)_{i<j}
\end{equation*}
is surjective. For $i<j$, the inequality $\lambda_i\ge\lambda_j$ and \eqref{eq:dyadic-offdiag} give
\begin{equation*}
\langle v_i,v_j\rangle-\langle\widetilde u_i,u_j\rangle
\in2^{\lambda_j}\Z_2.
\end{equation*}
Thus we can choose $z_0\in L_n$ such that
\begin{equation*}
y=u_j+2^{\lambda_j}z_0,\quad
\langle\widetilde u_i,y\rangle=\langle v_i,v_j\rangle\quad(i<j).
\end{equation*}
Since $\langle y,y\rangle\equiv\langle u_j,u_j\rangle\pmod{2^{\lambda_j+1}}$, equation \eqref{eq:dyadic-diagonal} implies that
\begin{equation*}
a=\frac{\langle v_j,v_j\rangle-\langle y,y\rangle}{2^{\lambda_j+1}}
\in\Z_2.
\end{equation*}

Set $M=\{z\in L_n:\langle P,z\rangle=0\}$. Surjectivity of the preceding map implies that the image of $M$ in $\bar L_n$ is $\bar P^\perp$. 
Indeed, if $\bar z_0\in\bar P^\perp$ and $z_0\in L_n$ is a lift, then
$\langle\widetilde u_i,z_0\rangle=2a_i$ for some $a_i\in\Z_2$.
By surjectivity, choose $w\in L_n$ with
$\langle\widetilde u_i,w\rangle=-a_i$ for all $i<j$.
Then $z_0+2w\in M$ and has reduction $\bar z_0$. The reverse inclusion is immediate.

Since $\bar y=\bar u_j\notin\bar P$, we can choose $z\in M$ with $\langle y,z\rangle$ odd. For such a $z$, consider
$$
\widetilde u_j=y+2^{\lambda_j}tz,
\quad t\in\Z_2,
$$
where $t$ will be chosen below. Then $\langle \widetilde u_i, \widetilde u_j\rangle = \langle v_i, v_j\rangle$ for $i<j$. Moreover, 
$$\langle\widetilde u_j, \widetilde u_j \rangle = \langle v_j, v_j \rangle ~\Longleftrightarrow ~0 = \langle y,z\rangle t+2^{\lambda_j-1}\langle z,z\rangle t^2-a=:F(t).$$ 
For every $t\in\Z_2$,
\begin{equation*}
F'(t)=\langle y,z\rangle+2^{\lambda_j}\langle z,z\rangle t\in\Z_2^\times.
\end{equation*}
It therefore suffices, by Hensel's lemma, to choose $z$ so that $F$ has a root modulo $2$.

If $\lambda_j\ge2$, any such $z$ works, since $F(t)\equiv t-a\pmod2$. Suppose that $\lambda_j=1$ and $\bar c_n\notin\bar y+\bar P$. We can then choose $z\in M$ satisfying
\begin{equation*}
\langle y,z\rangle\equiv1,\quad
\langle c_n,z\rangle\equiv0\pmod2.
\end{equation*}
Indeed, if $\bar c_n\in\bar P$, the second condition is automatic; otherwise the images of $\bar y$ and $\bar c_n$ in $\bar L_n/\bar P$ are linearly independent, so their pairings on $\bar P^\perp$ can be prescribed independently. By \eqref{eq:characteristic-class}, $\langle z,z\rangle$ is even, and again $F(t)\equiv t-a\pmod2$.

It remains to consider $\lambda_j=1$ and $\bar c_n\in\bar y+\bar P$. This can occur only in case (ii). Uniqueness of the coefficients in the independent frame gives $\epsilon_j=1$ and $\epsilon_i=0$ for $i>j$. Regard the $\epsilon_i$ as elements of $\{0,1\}\subset\Z_2$, and set
\begin{equation*}
w_y=y+\sum_{i<j}\epsilon_i\widetilde u_i,
\quad
w_v=v_j+\sum_{i<j}\epsilon_i v_i.
\end{equation*}
Both vectors reduce to $\bar c_n$, so all their coordinates are odd and
\begin{equation*}
\langle w_y,w_y\rangle\equiv\langle w_v,w_v\rangle\equiv n\pmod8.
\end{equation*}
The pairings already fixed give
\begin{equation*}
\langle v_j,v_j\rangle-\langle y,y\rangle
=\langle w_v,w_v\rangle-\langle w_y,w_y\rangle\in8\Z_2.
\end{equation*}
Hence $a\in2\Z_2$, and any $z\in M$ with $\langle y,z\rangle$ odd gives $F(0)\equiv0\pmod2$.

In every case, Hensel's lemma gives a root $t\in\Z_2$ of $F$. The resulting vector $\widetilde u_j=y+2^{\lambda_j}tz$ satisfies $\widetilde u_j\equiv u_j\pmod{2^{\lambda_j}L_n}$ and all the required pairings with $\widetilde u_i$ for $i\le j$. This completes the induction.
\end{proof}


\subsection{James's extension theorem and orbit classification}\label{Sub64}
A primitive vector $\alpha_0\in L_n$ (that is, $\alpha_0\notin 2L_n$) is called \emph{characteristic} if $\bar\alpha_0=\bar c_n$, equivalently if
$$
\langle\alpha_0,x\rangle\equiv\langle x,x\rangle\pmod2
$$
for every $x \in L_n$. For $0\ne\alpha\in L_n$, write
$$
\alpha=2^{s(\alpha)}\alpha_0,
\quad
s(\alpha)=\max\{s:\alpha\in2^sL_n\},
$$
with $\alpha_0$ primitive. James's invariant \cite[p. 645]{Jam70} is given by
$$
T(\alpha)=
\begin{cases}
1 &\bar\alpha_0=\bar c_n,\\
0 &\bar\alpha_0\ne\bar c_n,
\end{cases}
\quad T(0)=0.
$$
Recall that a sublattice $J\subseteq L_n$ is primitive if
$L_n/J$ is torsion-free, equivalently, if $J$ is a direct summand of $L_n$.
\begin{lem}\label{lem:james-tailored}
Let $J,K\subset L_n$ be primitive sublattices and let $\varphi:J\to K$ be an isometry. Then $\varphi$ extends to an element of $\mO_n(\Z_2)$ if and only if, for every primitive $\alpha\in J$,
$$
\bar\alpha=\bar c_n
\quad\Longleftrightarrow\quad
\overline{\varphi(\alpha)}=\bar c_n.
$$
\end{lem}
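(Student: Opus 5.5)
The plan is to specialize James's extension theorem \cite[pp. 645--646]{Jam70} to the standard unimodular lattice $L_n$. James's criterion, in the form used in Section \ref{Sec4}, is stated in terms of the invariant $T(\cdot)$ together with the quadratic data. For an isometry $\varphi:J\to K$ between primitive sublattices of a unimodular $\Z_2$-lattice, it says that $\varphi$ extends to an isometry of the whole lattice if and only if
$$
T(\varphi(\alpha))=T(\alpha)\quad\text{for all } \alpha\in J.
$$
Part of the work is checking that this is the right transcription in our normalization. So the first step is to recall James's theorem in our notation. Note that for $L_n$, a vector $\alpha_0$ is characteristic exactly when $\bar\alpha_0=\bar c_n$, by \eqref{eq:characteristic-class}, and this matches James's definition of characteristic elements ($\langle\alpha_0,x\rangle\equiv\langle x,x\rangle$ for all $x$).

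The key reduction is from all $\alpha\in J$ to primitive $\alpha\in J$. Write a nonzero $\alpha\in J$ as $\alpha=2^{s}\alpha_0$ with $\alpha_0$ primitive in $L_n$. Since $J$ is primitive, $\alpha_0\in J$; hence $s(\alpha)$ computed in $J$ agrees with $s(\alpha)$ computed in $L_n$. The same holds for $K$. Because $\varphi$ is a $\Z_2$-linear isomorphism $J\to K$, it carries primitive vectors of $J$ to primitive vectors of $K$. Therefore $\varphi(\alpha)=2^{s}\varphi(\alpha_0)$ with $\varphi(\alpha_0)$ primitive in $L_n$, and
$$
T(\alpha)=T(\alpha_0),\qquad T(\varphi(\alpha))=T(\varphi(\alpha_0)).
$$
So $T$ is preserved on all of $J$ if and only if it is preserved on primitive vectors. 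On primitive vectors, $T(\alpha)=1$ is precisely the condition $\bar\alpha=\bar c_n$. This gives the stated equivalence.

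For the necessity direction I would also give a direct argument. If $U\in\mO_n(\Z_2)$ extends $\varphi$, then $\bar U$ fixes $\bar c_n$, as noted after \eqref{eq:characteristic-class}. Hence $\bar\alpha=\bar c_n$ if and only if $\overline{U\alpha}=\bar c_n$.

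The main obstacle is sufficiency, which rests entirely on James's theorem. I expect to be careful about its exact hypotheses. James works with lattices over dyadic rings and his invariant may carry additional data beyond "is $\alpha_0$ characteristic". For the unramified ring $\Z_2$ and the odd unimodular lattice $L_n$, I believe the remaining data is already forced by $\varphi$ being an isometry, since the norms $\langle\alpha,\alpha\rangle$ are preserved. I also need to handle the edge case where $L_n$ has small rank relative to $J$. If James's theorem requires a rank condition, or requires $J$ and $K$ to be primitive in his sense rather than merely direct summands, I will verify this is satisfied. Primitivity was established earlier via Smith normal form, so only the rank condition needs checking. Failing a clean citation, the fallback is to reduce to the case where $\bar c_n$ lies in neither or both of $\bar J,\bar K$, and to build the extension by hand using reflections in vectors of unit or $2\cdot$unit norm.
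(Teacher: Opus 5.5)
Your proposal is correct and matches the paper's proof. Like the paper, you cite James's extension theorem in the form $T(\alpha)=T(\varphi(\alpha))$ for all $\alpha\in J$, and then use primitivity of $J$ and $K$ (so $J\cap 2^sL_n=2^sJ$ and $K\cap 2^sL_n=2^sK$) to write $\varphi(2^{s}\alpha_0)=2^{s}\varphi(\alpha_0)$ with $\varphi(\alpha_0)$ primitive in $L_n$, which reduces the criterion to primitive vectors, where $T=1$ means exactly $\bar\alpha=\bar c_n$. Two small remarks: the paper applies James's theorem with no rank hypothesis, so your fallback construction by reflections is not needed; and Section \ref{Sec4} invokes James only for odd $p$, where no $T$-condition appears, so that is not where the $T$-form of the criterion comes from.
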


\begin{proof}
James's extension theorem \cite[pp. 645--646]{Jam70} says that $\varphi$ extends if and only if
$$
T(\alpha)=T(\varphi(\alpha)) \text{ for all $\alpha \in J$}.
$$
Since $J$ and $K$ are primitive sublattices,
\begin{equation*}
J\cap2^sL_n=2^sJ
\quad\text{and}\quad
K\cap2^sL_n=2^sK.
\end{equation*}
Thus, for $0\ne\alpha=2^{s(\alpha)}\alpha_0\in J$, one has
$$
\varphi(\alpha)=2^{s(\alpha)}\varphi(\alpha_0)
$$
with $\varphi(\alpha_0)$ primitive. Hence the assertion follows from the definition of $T$.
\end{proof}

\begin{thm}\label{thm:dyadic-orbits}
Let $H$ be a finite abelian $2$-group. For $n \ge d(H)$, two surjections $f,g:L_n\twoheadrightarrow H$ lie in the same $\mO_n(\Z_2)$-orbit if and only if
\begin{enumerate}[label=(\roman*)]
\item $q_f=q_g$ in $\Gamma(H)$; and
\item either both maps are noncharacteristic, or both are characteristic and $\chi_f=\chi_g$.
\end{enumerate}
\end{thm}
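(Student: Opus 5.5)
Necessity is immediate from Lemma \ref{lem:Dyadic invariance}: if $g=f\circ U$ with $U\in\mO_n(\Z_2)$, then $q_g=q_f$. Moreover $g$ is characteristic exactly when $f$ is, and in that case $\chi_g=\chi_f$. So the work lies in sufficiency, and I would follow the template of Theorem \ref{thm4e}, with Lemma \ref{lem:weighted-gram} replacing Lemma \ref{lem4d} and Lemma \ref{lem:james-tailored} replacing the odd-prime form of James's theorem.

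First, fix frames $(u_i)$ and $(v_i)$ representing $f$ and $g$. By Lemma \ref{lem:dyadic-gram}, condition (i) is equivalent to the congruences \eqref{eq:dyadic-diagonal} and \eqref{eq:dyadic-offdiag}. Next I translate condition (ii) into the hypotheses of Lemma \ref{lem:weighted-gram}. By Lemma \ref{lem:barf image equivalent condition}, $f$ is characteristic iff $\bar c_n\in\bar J_f=\im(\bar f^*)$ under the identification of $\bar L_n$ with its dual. Since $\bar f^*\chi=\sum_i\chi(h_i)\bar u_i$, the coefficient vector of $\bar c_n$ in the basis $\bar u_i$ is $(\chi_f(h_i))_i$. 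So "both noncharacteristic" is case (i) of Lemma \ref{lem:weighted-gram}. "Both characteristic with $\chi_f=\chi_g$" is case (ii), because $\bar c_n$ then has the same coefficient vector in both frames. Applying the lemma, I replace $u_i$ by $\widetilde u_i\equiv u_i \pmod{2^{\lambda_i}L_n}$. This still gives a frame for $f$, and now $\langle \widetilde u_i,\widetilde u_j\rangle=\langle v_i,v_j\rangle$ holds exactly.

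The linear map $\varphi:J=\sum\Z_2\widetilde u_i\to K=\sum\Z_2 v_i$, $\widetilde u_i\mapsto v_i$, is then a well-defined isometry. It is well defined and bijective because both families have linearly independent reductions, hence are bases of primitive sublattices, as noted after Lemma \ref{lem:Dyadic invariance}. To extend $\varphi$ to $\mO_n(\Z_2)$ I verify the criterion of Lemma \ref{lem:james-tailored}. Take a primitive $\alpha=\sum a_i\widetilde u_i\in J$. Then $\bar\alpha=\sum \bar a_i\bar{\widetilde u}_i$ and $\overline{\varphi(\alpha)}=\sum\bar a_i\bar v_i$. In the noncharacteristic case neither equals $\bar c_n$, since $\bar c_n\notin\bar J_u\cup\bar J_v$. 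In the characteristic case each equals $\bar c_n$ iff $\bar a_i=\epsilon_i$ for all $i$, by the uniqueness of coefficients in an independent family and the common coefficient vector. Hence the two conditions are equivalent, and $\varphi$ extends to some $P\in\mO_n(\Z_2)$.

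Finally, setting $Q=P^T$, I have $Q^T\widetilde u_i=v_i$. By the frame transformation rule recorded in Section \ref{Sec4} (which does not depend on $p$), this gives $g=f\circ Q$. The main obstacle is the exact Gram correction, especially the case $\lambda_j=1$ with $\bar c_n\in\bar y+\bar P$, but Lemma \ref{lem:weighted-gram} already handles it. The remaining subtle point is matching the characteristic data to James's invariant $T$, and the primitivity argument in Lemma \ref{lem:james-tailored} takes care of that. The hypothesis $n\ge d(H)$ is only needed so that $\Sur(L_n,H)$ is nonempty and frames exist.
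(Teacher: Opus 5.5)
Your proposal is correct and follows the paper's proof essentially step for step. You get necessity from the invariance lemma. You turn $q_f=q_g$ into the Gram congruences, match the noncharacteristic and characteristic cases to hypotheses (i) and (ii) of Lemma~\ref{lem:weighted-gram} to make the Gram matrices equal, and then extend $u_i\mapsto v_i$ via Lemma~\ref{lem:james-tailored} and transpose to obtain $g=f\circ Q$.
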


\begin{proof}
Necessity is Lemma \ref{lem:Dyadic invariance}. For sufficiency, choose frames $(u_i)$ and $(v_i)$ representing $f$ and $g$, respectively. Let $F$ and $G$ be their corresponding row-frame matrices. Equality $q_f=q_g$ gives \eqref{eq:dyadic-diagonal} and \eqref{eq:dyadic-offdiag} by Lemma \ref{lem:dyadic-gram}. If the maps are characteristic with common functional $\chi$, put $\epsilon_i=\chi(h_i)$. Then
$$
\bar c_n=\sum_i\epsilon_i\bar u_i
=\sum_i\epsilon_i\bar v_i.
$$
If they are noncharacteristic, $\bar c_n \not\in \bar J_u\cup\bar J_v$, where $\bar J_u$ and $\bar J_v$ are as in Lemma \ref{lem:weighted-gram}.
Then Lemma \ref{lem:weighted-gram} allows us, without changing $f$, to assume
$$
\langle u_i,u_j\rangle=\langle v_i,v_j\rangle
\quad(1\le i,j\le r).
$$
Since $f$ and $g$ are surjective, the reductions of the $u_i$ and of the $v_i$ are linearly
independent. Therefore, both $F$ and $G$ have an $r\times r$ minor with unit determinant.
By Smith normal form,
$$
J=\sum_i\Z_2u_i \quad\text{and}\quad K=\sum_i\Z_2v_i
$$
are primitive sublattices of $L_n$. Moreover, $u_i\mapsto v_i$ defines an isometry
$$
\varphi:J \to K.
$$
In the noncharacteristic case, neither $J/2J$ nor $K/2K$ contains $\bar c_n$. In the characteristic case, a primitive vector $\sum_i a_iu_i$ reduces to $\bar c_n$ exactly when $\bar a_i = \epsilon_i$ for all $1\le i \le r$, and the same criterion holds for $\sum_i a_iv_i$. Lemma \ref{lem:james-tailored} then extends $\varphi$ to some $V\in\mO_n(\Z_2)$. With $U=V^{T}$, we obtain $FU=G$, so $g=f\circ U$.
\end{proof}


\section{Counting dyadic orbits}\label{Sec7}
Retain the fixed decomposition
$$
H=\bigoplus_{i=1}^r\Z/2^{\lambda_i}\Z\,h_i,
\quad \lambda_1\ge\cdots\ge\lambda_r\ge1,
$$
from Section \ref{Sec6}. For a nonzero functional $\chi\in\Hom(H,\F_2)$, put
$$
C_\chi=\{x\in H:\chi(x)=1\}.
$$
Choose $x_\chi\in C_\chi$ and define
$$
D_\chi=
\angles{\gamma(x)-\gamma(y):x,y\in C_\chi} \subset\Gamma(H).
$$
It is immediate that the coset $n\gamma(x_\chi)+D_\chi$ is independent of $x_\chi$.

\subsection{Surjections with a fixed characteristic functional}\label{subsec:characteristic-strata}

We begin with an elementary fact about sufficiently long sums in a finite abelian group.

\begin{lem}\label{lem:stable-sumset}
Let $X$ be a finite set, let $A$ be a finite abelian group, let $\sigma:X\to A$, and choose $x_0\in X$. Put
$$
D=\angles{\sigma(x)-\sigma(x_0):x\in X}\subset A.
$$
Fix $y_1,\ldots,y_\ell\in X$. For all sufficiently large $n$,
$$
\left\{\sum_{i=1}^{\ell}\sigma(y_i)
+\sum_{j=\ell+1}^{n}\sigma(x_j):x_j\in X\right\}
=n\sigma(x_0)+D.
$$
\end{lem}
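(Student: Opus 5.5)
The plan is to subtract $\sigma(x_0)$ from every term and reduce to a statement about sumsets of a finite generating set of a finite abelian group. Put $\tau(x)=\sigma(x)-\sigma(x_0)\in D$ and $S=\{\tau(x):x\in X\}\subset D$. Then $S$ generates $D$ and $0=\tau(x_0)\in S$. Rewriting each term,
$$
\sum_{i=1}^{\ell}\sigma(y_i)+\sum_{j=\ell+1}^{n}\sigma(x_j)
=n\sigma(x_0)+c+\sum_{j=\ell+1}^{n}\tau(x_j),
\qquad c:=\sum_{i=1}^{\ell}\tau(y_i)\in D.
$$
So the claim is equivalent to $c+(n-\ell)S=D$ for all large $n$, where $kS$ denotes the $k$-fold sumset. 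Since $c\in D$, translation by $c$ preserves $D$, and it suffices to show that $kS=D$ for all sufficiently large $k$.

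The inclusion $kS\subseteq D$ is immediate. For the reverse, I use $0\in S$, which gives
$$
S\subseteq 2S\subseteq 3S\subseteq\cdots\subseteq D.
$$
This is an increasing chain of subsets of the finite set $D$, so it stabilizes: there is a $k_0$ with $kS=k_0S$ for all $k\ge k_0$. The stable set $T=k_0S$ satisfies $T+T=2k_0S=T$, so it is closed under addition. A nonempty subset of a finite group that is closed under addition is a subgroup, since each element has finite order and its inverse is therefore a positive multiple of it. Hence $T$ is a subgroup containing $S$, so $T\supseteq\angles{S}=D$, and therefore $T=D$.

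Equivalently, one could bound things explicitly: every element of $D$ is a sum of elements of $S$ with nonnegative multiplicities less than the exponent of $D$, and padding with zeros reaches any longer length. Either way, the resulting threshold $n\ge \ell+k_0$ depends only on $X$, $\sigma$ and $\ell$, which is what "sufficiently large" requires. None of the steps is a real obstacle. The only points to watch are that $x_0\in X$ supplies the element $0\in S$ needed for monotonicity of the chain, and that the fixed prefix $y_1,\ldots,y_\ell$ only shifts the set by an element of $D$.
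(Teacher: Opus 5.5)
Your proof is correct and follows essentially the same route as the paper: subtract $\sigma(x_0)$ so that $0\in S$, show that $kS=D$ for all large $k$, and absorb the fixed prefix as a translation by an element of $D$. The only difference is that the paper gets $kS=D$ from a uniform bound $N$ on the number of summands plus padding with zeros (your ``explicit'' variant), whereas your main argument uses stabilization of the chain $S\subseteq 2S\subseteq\cdots$.
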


\begin{proof}
Set
\begin{equation*}
S=\{\sigma(x)-\sigma(x_0):x\in X\}.
\end{equation*}
Since $0\in S$ and $S$ generates the finite group $D$, there exists
$N$ such that every element of $D$ is a sum of at most $N$ elements
of $S$. Padding with zeros shows that every element of $D$ is a
sum of exactly $m$ elements of $S$ for every $m\ge N$.
Since
\begin{equation*}
\sum_{i=1}^{\ell}(\sigma(y_i)-\sigma(x_0))\in D,
\end{equation*}
the asserted equality follows for $n\ge N+\ell$.
\end{proof}

For $n\ge1$ and $0\neq \chi \in \Hom(H, \F_2)$, define
$$
\mc Q_{n,\chi}
:=\{q_f:f\in\Sur(L_n,H), ~ f\text{ is characteristic and}\ \chi_f=\chi\}
$$
and
$$
\mc Q_n^{\mathrm{nc}}
:=\{q_f:f\in\Sur(L_n,H)\text{ and } f\text{ is noncharacteristic}\}.
$$

\begin{prop}
\label{prop:stable-dyadic-images}
Let $0 \neq \chi \in\Hom(H,\F_2)$. For all sufficiently large $n$,
\begin{equation}\label{eq:stable-characteristic-image}
\mc Q_{n,\chi}=n\gamma(x_\chi)+D_\chi
\end{equation}
and
\begin{equation}\label{eq:stable-noncharacteristic-image}
\mc Q_n^{\mathrm{nc}}=\Gamma(H).
\end{equation}
\end{prop}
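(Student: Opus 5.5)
The plan is to translate both conditions into conditions on the tuple of images $x_k=f(e_k)\in H$ and then apply Lemma \ref{lem:stable-sumset}. A homomorphism $f:L_n\to H$ is determined by the tuple $(x_1,\ldots,x_n)$, and it is surjective exactly when the $x_k$ generate $H$. Moreover $q_f=\sum_k\gamma(x_k)$, so both sets in the proposition are sets of sums $\sum_k\gamma(x_k)$ over suitably constrained generating tuples.

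\emph{Reformulating the characteristic condition.} I first claim that $f$ is characteristic with $\chi_f=\chi$ if and only if $x_k\in C_\chi$ for every $k$.
\begin{itemize}
\item If $f$ is characteristic with $\chi_f=\chi$, then taking $x=e_k$ in Lemma \ref{lem:barf image equivalent condition}(2) gives $\chi(x_k)\equiv\langle e_k,e_k\rangle=1$.
\item Conversely, suppose $\chi(x_k)=1$ for all $k$. For $x=\sum a_ke_k$ we get $\chi(f(x))=\sum a_k\equiv\sum a_k^2=\langle x,x\rangle\pmod 2$. So $f$ is characteristic, and $\chi_f=\chi$ by uniqueness.
\end{itemize}
Hence
$$
\mc Q_{n,\chi}=\Bigl\{\sum_{k=1}^n\gamma(x_k): x_k\in C_\chi,\ \langle x_1,\ldots,x_n\rangle=H\Bigr\}.
$$

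\emph{The characteristic image \eqref{eq:stable-characteristic-image}.} Each such sum equals $n\gamma(x_\chi)+\sum_k(\gamma(x_k)-\gamma(x_\chi))$, which lies in $n\gamma(x_\chi)+D_\chi$; this gives the inclusion $\subseteq$. For the reverse inclusion:
\begin{itemize}
\item $C_\chi$ generates $H$. Indeed, $C_\chi=x_\chi+\ker\chi$, so $\ker\chi\subset\langle C_\chi\rangle$, and $x_\chi\in C_\chi$ as well.
\item Fix $y_1,\ldots,y_\ell\in C_\chi$ generating $H$.
\item Apply Lemma \ref{lem:stable-sumset} with $X=C_\chi$, $A=\Gamma(H)$, $\sigma=\gamma|_{C_\chi}$ and $x_0=x_\chi$, so that $D=D_\chi$.
\end{itemize}
For large $n$, every element of $n\gamma(x_\chi)+D_\chi$ is then a sum $\sum_{i\le\ell}\gamma(y_i)+\sum_{j>\ell}\gamma(x_j)$ with all $x_j\in C_\chi$. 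The tuple $(y_1,\ldots,y_\ell,x_{\ell+1},\ldots,x_n)$ generates $H$ and lies in $C_\chi$, so it defines a characteristic surjection with functional $\chi$.

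\emph{The noncharacteristic image \eqref{eq:stable-noncharacteristic-image}.} By the same reformulation, $f$ is noncharacteristic exactly when there is no nonzero $\chi$ with all $x_k\in C_\chi$. In particular, $f$ is noncharacteristic as soon as some $x_k=0$, since $0\notin C_\chi$ for every $\chi$. The argument then runs as follows.
\begin{itemize}
\item Take $X=H$, $\sigma=\gamma$ and $x_0=0$. Then $D=\langle\gamma(H)\rangle$.
\item This subgroup is all of $\Gamma(H)$, because $[x,y]=\gamma(x+y)-\gamma(x)-\gamma(y)$ and the decomposition \eqref{eq:gamma-decomposition} shows that the elements $\gamma(h_i)$ and $[h_i,h_j]$ generate $\Gamma(H)$.
\item Fix $y_1=h_1,\ldots,y_r=h_r$ and $y_{r+1}=0$.
\end{itemize}
Lemma \ref{lem:stable-sumset} then shows that for large $n$ every element of $n\gamma(0)+\Gamma(H)=\Gamma(H)$ is $q_f$ for some surjection $f$ having a zero image. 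Such an $f$ is noncharacteristic, which proves $\mc Q_n^{\mathrm{nc}}=\Gamma(H)$.

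The only delicate point is the exact equivalence between "characteristic with functional $\chi$" and "all $f(e_k)\in C_\chi$". Once that is established, both equalities are direct applications of Lemma \ref{lem:stable-sumset}.
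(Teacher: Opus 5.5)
Your proof is correct and follows essentially the same route as the paper. You identify the characteristic surjections with functional $\chi$ as exactly those with every $f(e_k)\in C_\chi$, and then apply Lemma \ref{lem:stable-sumset} twice: once with $X=C_\chi$, padding with fixed generators of $H$ taken from $C_\chi$, and once with $X=H$, $x_0=0$, padding with $0$ and the $h_i$ so that the resulting surjection is noncharacteristic. The only differences are cosmetic: you state the characteristic criterion as an explicit equivalence, and you use arbitrary generators of $H$ in $C_\chi$ instead of the paper's specific choice $x_0, x_0+k_1,\ldots,x_0+k_s$.
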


\begin{proof}
If $f$ is characteristic and $\chi_f=\chi$, then 
$$
\chi(f(e_k)) = \langle e_k, e_k\rangle = 1 ~\text{in $\F_2$},
$$
so $f(e_k) \in C_\chi$ for every $1 \le k \le n$. Therefore,
$$
q_f-n\gamma(x_\chi)
=\sum_{k=1}^n(\gamma(f(e_k))-\gamma(x_\chi))
\in D_\chi.
$$
This proves one inclusion in \eqref{eq:stable-characteristic-image}.

For the reverse inclusion, choose $x_0 \in C_\chi$ and generators $k_1, \ldots, k_s$ of $\ker \chi$. 
Let $q \in n\gamma(x_\chi) + D_\chi$. Apply Lemma \ref{lem:stable-sumset} with
$$
X=C_\chi,
\quad A=\Gamma(H),
\quad \sigma=\gamma,
\quad x_0=x_\chi.
$$
Since
$$
\left\langle \gamma(x)-\gamma(x_\chi): x\in C_\chi\right\rangle=D_\chi,
$$
for all sufficiently large $n$, one can write
$$
q=\sum_{k=1}^n\gamma(y_k),
$$
where
$$
y_1=x_0,\quad
y_{i+1}=x_0+k_i\quad(1\le i\le s),
$$
and
$$
y_k\in C_\chi\quad(1\le k\le n).
$$
Now take $f \in \Hom(L_n, H)$ so that $f(e_k) = y_k$ for all $1 \le k \le n$. Since $y_1, \ldots, y_{s+1}$ generate $H$, $f$ is surjective. Furthermore, if $x = \sum_{k=1}^n a_ke_k$ with $a_k \in \Z_2$, we have
$$
\chi(f(x)) = \sum_k \bar{a}_k\chi(y_k) = \sum_{k} \bar{a}_k = \sum_k \bar{a}_k^2 = \langle x, x\rangle \pmod{2}.
$$
It follows that $f$ is characteristic, $\chi = \chi_f$, and $q = q_f$. This proves \eqref{eq:stable-characteristic-image}.

For \eqref{eq:stable-noncharacteristic-image}, let $q' \in \Gamma(H)$. Apply Lemma \ref{lem:stable-sumset} with
$$
X=H,
\quad A=\Gamma(H),
\quad \sigma=\gamma,
\quad x_0=0.
$$
By \eqref{eq:gamma-decomposition}, $\Gamma(H)$ is generated by the image of $\gamma: H \to \Gamma(H)$.
As above, for all sufficiently large $n$, one can write
$$
q'=\sum_{k=1}^n\gamma(z_k),
$$
where
$$
z_1=0,\quad
z_{i+1}=h_i\quad(1\le i\le r),
$$
and
$$
z_k\in H\quad(1\le k\le n).
$$
Now choose $f' \in \Hom(L_n, H)$ such that $f'(e_k) = z_k$ for all $1 \le k \le n$. Then $f'$ is surjective and $q'= q_{f'}$. If $f'$ were characteristic, then it would satisfy
$$
\chi_{f'}(f'(e_1)) = \langle e_1, e_1\rangle = 1~\text{in $\F_2$},
$$
which is a contradiction to our assumption that $f'(e_1) = 0$. This completes the proof of \eqref{eq:stable-noncharacteristic-image}.
\end{proof} 

Combining Proposition \ref{prop:stable-dyadic-images} with Theorem \ref{thm:dyadic-orbits} gives, for all sufficiently large $n$,
\begin{equation}\label{eq:dyadic-orbit-bijection}
\mO_n(\Z_2)\backslash\Sur(L_n,H)
\cong
\{\mathrm{nc}\}\times\Gamma(H)
\ \sqcup\
\left\{(\chi,q):
\begin{array}{l}
0\ne\chi\in\Hom(H,\F_2),\\
q\in n\gamma(x_\chi)+D_\chi
\end{array}\right\}.
\end{equation}
Here $\mathrm{nc}$ abbreviates noncharacteristic and labels the orbits represented by surjections $f$ that are not characteristic. Therefore, for all sufficiently large $n$,
\begin{equation}\label{eq:number-dyadic-orbits}
\#(\mO_n(\Z_2)\backslash\Sur(L_n,H))
=|\Gamma(H)|+\sum_{0\ne\chi}|D_\chi|.
\end{equation}

\subsection{The index of \texorpdfstring{$D_\chi$}{Dchi} in \texorpdfstring{$\Gamma(H)$}{Gamma(H)}} \label{Sub72}
\begin{lem}
\label{lem:Dchi-index}
Let $r=d(H)$. Then
$$
[\Gamma(H):D_\chi]=
\begin{cases}
2^{r+1},&\chi|_{H[2]}\ne0,\\
2^{r+2},&\chi|_{H[2]}=0.
\end{cases}
$$
\end{lem}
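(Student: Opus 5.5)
The plan is to compute $\Gamma(H)/D_\chi$ directly. First I choose a basis of $H$ adapted to $\chi$. Then I find explicit generators of $D_\chi$, which gives an upper bound on the quotient. Finally I build a homogeneous quadratic map that vanishes on $D_\chi$, which gives the matching lower bound.

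\emph{Step 1: normalizing $\chi$.} Among the indices $i$ with $\chi(h_i)=1$, pick $m$ with $\lambda_m$ minimal. For every other $i$ with $\chi(h_i)=1$, replace $h_i$ by $h_i-h_m$. Since $\lambda_i\ge\lambda_m$, the new element still has order $2^{\lambda_i}$, so the result is still a decomposition of $H$ of the same type. Now $\chi(h_i)=\delta_{im}$. The group $H[2]$ is generated by the elements $2^{\lambda_i-1}h_i$, and $\chi(2^{\lambda_i-1}h_i)=1$ only when $i=m$ and $\lambda_m=1$. Hence $\chi|_{H[2]}\ne0$ if and only if $\lambda_m=1$. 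Take $x_\chi=h_m$, so that $C_\chi=h_m+\ker\chi$, and $\ker\chi$ is generated by the $h_i$ ($i\ne m$) together with $2h_m$.

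\emph{Step 2: upper bound.} Put $\delta(k)=\gamma(h_m+k)-\gamma(h_m)=\gamma(k)+[h_m,k]$ for $k\in\ker\chi$. By \eqref{eq:universal quadratic map}, $\delta(k_1+k_2)=\delta(k_1)+\delta(k_2)+[k_1,k_2]$, so $[k_1,k_2]\in D_\chi$ for all $k_1,k_2\in\ker\chi$. Also $\delta(2k)-2\delta(k)=2\gamma(k)$, so $2\gamma(k)\in D_\chi$. Applying this with the generators of $\ker\chi$ gives the following elements of $D_\chi$:
\begin{itemize}
\item $[h_i,h_j]$ for $i,j\ne m$;
\item $2[h_i,h_m]$;
\item $2\gamma(h_i)$ and $\gamma(h_i)+[h_m,h_i]$ for $i\ne m$;
\item $\delta(2h_m)=4\gamma(h_m)+2[h_m,h_m]=8\gamma(h_m)$.
\end{itemize}
By \eqref{eq:gamma-decomposition}, $\Gamma(H)/D_\chi$ is therefore generated by $\gamma(h_m)$, of order dividing $\min(8,2^{\lambda_m+1})$, together with the $r-1$ classes $\gamma(h_i)$ for $i\ne m$, each of order dividing $2$. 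This gives $[\Gamma(H):D_\chi]\le 2^{r+1}$ when $\lambda_m=1$, and $\le 2^{r+2}$ when $\lambda_m\ge2$.

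\emph{Step 3: lower bound.} I expect this to be the main point. Write $N=\min(8,2^{\lambda_m+1})$. For $x=\sum_i a_ih_i$, define
$$
Q(x)=\Bigl(a_m^2 \bmod N,\ \bigl(a_i^2+a_ia_m \bmod 2\bigr)_{i\ne m}\Bigr)\in \Z/N\Z\oplus\F_2^{r-1}.
$$

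\emph{Well-definedness.} The coefficient $a_m$ is defined modulo $2^{\lambda_m}$, so $a_m^2$ is well defined modulo $2^{\lambda_m+1}$, hence modulo $N$. For $i\ne m$ the entry is well defined modulo $2$ because $a_i$ and $a_m$ are defined modulo at least $2$.

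\emph{Quadraticity.} $Q$ is homogeneous quadratic, since each coordinate is a quadratic form in the coefficients. By the universal property of $\Gamma(H)$, it therefore induces a homomorphism $\tilde Q:\Gamma(H)\to\Z/N\Z\oplus\F_2^{r-1}$.

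\emph{Vanishing on $D_\chi$.} For $x\in C_\chi$ the coefficient $a_m$ is odd, so $a_m^2\equiv1\pmod 8$ and $a_i^2+a_ia_m\equiv a_i+a_i=0\pmod2$. Thus $Q$ is constant on $C_\chi$, and $\tilde Q$ vanishes on $D_\chi$.

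\emph{Surjectivity.} We have $\tilde Q(\gamma(h_m))=(1,0)$ and $\tilde Q(\gamma(h_i))=(0,e_i)$ for $i\ne m$, so $\tilde Q$ is surjective.

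Consequently $[\Gamma(H):D_\chi]\ge N\cdot 2^{r-1}$. This equals $2^{r+1}$ when $\lambda_m=1$ and $2^{r+2}$ when $\lambda_m\ge2$, matching the upper bound. By Step 1 these two cases are exactly $\chi|_{H[2]}\ne0$ and $\chi|_{H[2]}=0$, which proves the lemma.
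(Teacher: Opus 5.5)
Your proof is correct, and its skeleton matches the paper's. Relations forced in $\Gamma(H)/D_\chi$ give the upper bound, and an explicit homogeneous quadratic map that is constant on $C_\chi$, pushed through the universal property of $\Gamma(H)$, gives the matching lower bound.

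The genuine difference is that you first re-choose the cyclic decomposition so that $\chi(h_i)=\delta_{im}$ with $\lambda_m$ minimal. Minimality is exactly what makes $2^{\lambda_i}(h_i-h_m)=0$, so the modified family is still a decomposition of the same type. After that everything is explicit: the quotient is $\Z/N\Z\oplus\F_2^{r-1}$ with $N=\min(8,2^{\lambda_m+1})$, and the case split of the lemma becomes $\lambda_m=1$ versus $\lambda_m\ge2$.

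The paper instead stays coordinate-free. With $K=\ker\chi$, $x_0\in C_\chi$ and $t=2x_0$, it identifies the quotient with $((\Z/8\Z)a\oplus K/2K)/\angles{4a-\bar t}$. It builds the inverse from the presentation $H\cong(\Z\oplus K)/\angles{(2,-t)}$ via $f(n,k)=n^2a+(1-n)\bar k$, and reads off the index as $2^{d(K)+2}$. Its case split comes from $d(K)=\dim_{\F_2}K[2]=\dim_{\F_2}\ker(\chi|_{H[2]})$.

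In your coordinates the two arguments coincide. Take $x_0=h_m$, write $x=a_mh_m+\sum_{i\ne m}a_ih_i$, and identify the paper's group with $\Z/N\Z\oplus\F_2^{r-1}$ in the evident way. Then the paper's $f$ is literally your map $Q$, since $a_i^2+a_ia_m\equiv a_i(1-a_m)\pmod 2$.

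Your route gives a concrete description of $\Gamma(H)/D_\chi$ and an elementary explanation of the dichotomy. The paper's route avoids the change of basis, so it needs no re-application of the $\Gamma$-decomposition to a modified basis. It also yields the uniform formula $[\Gamma(H):D_\chi]=2^{d(\ker\chi)+2}$ without singling out a summand.
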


\begin{proof}
Set $K=\ker\chi$, choose $x_0\in C_\chi$, and write $t=2x_0\in K$ and $\bar K=K/2K$. Let $Q=\Gamma(H)/D_\chi$, and let $q:H\to Q$ be the homogeneous quadratic map induced by $\gamma: H \to \Gamma(H)$. Since $q(x_0+k)=q(x_0)$ for $k\in K$, its associated bilinear form satisfies
\begin{equation*}
q(k)=-B_q(x_0,k).
\end{equation*}
Thus $\ell=q|_K$ is additive. Homogeneity then gives for $k \in K$
\begin{equation*}
2\ell(k)=\ell(2k)=4\ell(k),\quad
4q(x_0)=\ell(t),\quad 8q(x_0)=0.
\end{equation*}
It follows from \eqref{eq:gamma-decomposition} that $Q$ is generated by $q(H)$, so these identities define a surjective homomorphism
\begin{equation*}
E:=((\Z/8\Z)a\oplus\bar K)/\angles{4a-\bar t}
\rightarrow Q,\quad
a\mapsto q(x_0),\quad\bar k\mapsto\ell(k).
\end{equation*}
We show that this map is an isomorphism by constructing its inverse.

The map $(n,k)\mapsto nx_0+k$ induces an isomorphism
\begin{equation*}
H\cong(\Z\oplus K)/\angles{(2,-t)}.
\end{equation*}
Define $f:\Z\oplus K\to E$ by $f(n,k)=n^2a+(1-n)\bar k$. Since
\begin{equation*}
f(n+2,k-t)-f(n,k)=(n+1)(4a+\bar t)=0,
\end{equation*}
this map descends to $H$. Moreover,
\begin{align*}
f(n+n',k+k')-f(n,k)-f(n',k')
&=2nn'a-n'\bar k-n\bar k',\\
f(mn,mk)-m^2f(n,k)&=m(1-m)\bar k=0
\end{align*}
for $m,n,n'\in\Z$ and $k,k'\in K$. Thus the induced map $q_0:H\to E$ is homogeneous quadratic. Since $f(1,k)=a$, it is constant on $C_\chi$ and therefore induces a homomorphism $Q\to E$. Its values $q_0(x_0)=a$ and $q_0(k)=\bar k$ show that it is inverse to the displayed surjection.

The element $4a-\bar t$ has order $2$ in $(\Z/8\Z)a\oplus\bar K$. Hence
\begin{equation*}
[\Gamma(H):D_\chi]=|E|=\frac{8|K/2K|}{2}=2^{d(K)+2}.
\end{equation*}
Finally, $K[2]=\ker(\chi|_{H[2]})$ and $\dim_{\F_2}H[2]=r$, so
\begin{equation*}
d(K)=\dim_{\F_2}K[2]=
\begin{cases}
r-1,&\chi|_{H[2]}\ne0,\\
r,&\chi|_{H[2]}=0.
\end{cases}\qedhere
\end{equation*}
\end{proof}

The restriction map
$$
\Hom(H,\F_2)\rightarrow\Hom(H[2],\F_2)
$$
has rank $t(H)$. Indeed, on a cyclic summand of order $2^\lambda$, a functional restricts nontrivially to the order-$2$ subgroup exactly when $\lambda=1$. Its kernel therefore has dimension $r-t(H)$. Consequently the number of nonzero functionals with zero restriction to $H[2]$ is $2^{r-t(H)}-1$, while the number of functionals with nonzero restriction is $2^r-2^{r-t(H)}$. Lemma \ref{lem:Dchi-index} gives
\begin{equation}\label{eq:Dchi-sum}
\begin{aligned}
\sum_{0\ne\chi}|D_\chi|
&=|\Gamma(H)|\left(\frac{2^r-2^{r-t(H)}}{2^{r+1}}
+\frac{2^{r-t(H)}-1}{2^{r+2}}\right)\\
&=|\Gamma(H)|\left(\frac12-2^{-t(H)-2}-2^{-r-2}\right).
\end{aligned}
\end{equation}
Combining \eqref{eq:number-dyadic-orbits} and \eqref{eq:Dchi-sum} gives the stable number of dyadic orthogonal orbits
\begin{equation}\label{eq:dyadic-total-orbits}
T(H):=\#(\mO_n(\Z_2)\backslash\Sur(L_n,H))=|\Gamma(H)|\left(\frac32-2^{-t(H)-2}-2^{-r-2}\right).
\end{equation}

\subsection{Determinant-changing stabilizers}

In this subsection, we prove that the stabilizer of a surjection $f:L_n \twoheadrightarrow H$ in $\mO_n(\Z_2)$ contains an element of determinant $-1$ when $n > 2d(H)$.
For $x \in \F_2^n$, we write $\wt(x)$ for the Hamming weight of $x$, that is, the number of nonzero coordinates of $x$.

\begin{lem}
\label{lem:binary-code}
Let $C\subset\F_2^n$ be a subspace with $\dim_{\F_2} C>n/2$. Then the following holds.
\begin{enumerate}
\item There exists $x\in C$ such that $\wt(x)\not\equiv0\pmod4$.
\item If $C\subset\bar c_n^\perp$, then there exists $x\in C$ satisfying $\wt(x)\equiv2\pmod4$.
\end{enumerate}
\end{lem}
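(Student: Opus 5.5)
Part (b) will follow at once from part (a), so the real content is (a). For (a) I will argue by contradiction: suppose every $x\in C$ has $\wt(x)\equiv0\pmod4$, i.e.\ $C$ is doubly even. The plan is to show that this forces $C$ to be self-orthogonal for the standard form on $\F_2^n$. That contradicts the dimension hypothesis, because the standard form is nondegenerate.

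The key step is the weight identity. For $x,y\in\F_2^n$, let $x\ast y$ denote the coordinatewise product, i.e.\ the vector supported on the common support of $x$ and $y$. Counting supports gives
$$
\wt(x+y)=\wt(x)+\wt(y)-2\,\wt(x\ast y)
$$
as an identity in $\Z$. Now take $x,y\in C$. Then $x+y\in C$ as well, so all three weights are divisible by $4$, and hence $2\,\wt(x\ast y)\equiv0\pmod4$. So $\wt(x\ast y)$ is even. Since $\langle x,y\rangle=\wt(x\ast y)\bmod 2$, this gives $\langle x,y\rangle=0$. Therefore $C\subseteq C^\perp$.

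Nondegeneracy gives $\dim_{\F_2}C^\perp=n-\dim_{\F_2}C$, so $\dim C\le n-\dim C$, that is, $\dim C\le n/2$. This contradicts $\dim_{\F_2} C>n/2$, which proves (a).

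For (b), assume $C\subset\bar c_n^\perp$. By \eqref{eq:characteristic-class}, $\langle\bar c_n,x\rangle=\langle x,x\rangle\equiv\wt(x)\pmod2$, so every $x\in C$ has even weight. Part (a) supplies some $x\in C$ with $\wt(x)\not\equiv0\pmod4$. Since that weight is even, $\wt(x)\equiv2\pmod4$.

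I expect no step to be a genuine obstacle. The only point needing care is that the identity for $\wt(x+y)$ holds over $\Z$ (it is not merely a parity statement), since the argument uses it modulo $4$.
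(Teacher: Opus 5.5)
Your proof is correct and matches the paper's argument. The paper also assumes $C$ is doubly even and gets $\langle x,y\rangle\equiv(\wt(x)+\wt(y)-\wt(x+y))/2\equiv 0 \pmod 2$, hence $C\subseteq C^\perp$, contradicting $2\dim_{\F_2} C>n$; it then deduces (b) from the fact that every vector of $\bar c_n^\perp$ has even weight.
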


\begin{proof}
If every vector of $C$ had weight divisible by $4$, then
$$
\langle x,y\rangle
\equiv\frac{\wt(x)+\wt(y)-\wt(x+y)}2\equiv0\pmod2
$$
for all $x,y\in C$. Hence $C\subseteq C^\perp$, contradicting the assumption $2\dim_{\F_2} C>n$. This proves (a). Under the hypothesis of (b), every vector has even weight, so the vector supplied by (a) has weight congruent to $2$ modulo $4$.
\end{proof}

\begin{lem}
\label{lem:dyadic-stabilizer}
Let $f:L_n\twoheadrightarrow H$ be a surjection and assume $n>2d(H)$.
Then there exists $U\in\mO_n(\Z_2)$ such that
$$
f\circ U=f
\quad\text{and}\quad
\det(U)=-1.
$$
\end{lem}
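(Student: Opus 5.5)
We will construct the required $U$ as a product of reflections in vectors of $\ker f$ (with an odd number of reflections), or more generally as an isometry acting trivially modulo $\ker f$. For $p=2$ the reflection $R_v(x)=x-2\frac{\langle x,v\rangle}{\langle v,v\rangle}v$ is integral precisely when $\langle v,v\rangle$ divides $2\langle x,v\rangle$ for all $x\in L_n$. This holds, for instance, when $v$ is primitive with $\langle v,v\rangle\in\Z_2^\times$, or when $\langle v,v\rangle\in 2\Z_2^\times$ and $\langle v,L_n\rangle=\Z_2$. In both cases $\det R_v=-1$, and $R_v$ fixes $f$ whenever $v\in\ker f$. So the task is to find $v\in\ker f$ with $\langle v,v\rangle$ of $2$-adic valuation $0$ or $1$ and $v$ primitive.

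\textbf{Setup.} Let $W=\ker\bar f\subset\bar L_n$. Then $\dim W=n-d(H)>n/2$. As in the proof of Theorem \ref{thm4e}, every $w\in W$ lifts to some $v\in\ker f$: take any lift $v_0$, note $f(v_0)=2h$, and subtract $2y$ with $f(y)=h$. Such a lift is primitive, and $\langle v,v\rangle\equiv\wt(w)\pmod 2$.

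\textbf{Case 1: $W\not\subset\bar c_n^\perp$.} Choose $w\in W$ of odd weight. Its lift $v$ has $\langle v,v\rangle$ odd, so $R_v\in\mO_n(\Z_2)$, $\det R_v=-1$, and $f\circ R_v=f$.

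\textbf{Case 2: $W\subset\bar c_n^\perp$.} Lemma \ref{lem:binary-code}(b) gives $w\in W$ with $\wt(w)\equiv2\pmod4$. The main obstacle is that $\langle v,v\rangle\bmod 4$ depends on the lift, since $v+2z$ changes it by $4\langle v,z\rangle+4\langle z,z\rangle$. We therefore need to control the lift more carefully.

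\begin{itemize}
\item First take the lift $v_0$ with coordinates in $\{0,1\}$. Then $\langle v_0,v_0\rangle=\wt(w)\equiv 2\pmod 4$.
\item Correct by $2y$ with $f(y)=h$, so $v=v_0-2y$. Then $\langle v,v\rangle=\wt(w)-4\langle v_0,y\rangle+4\langle y,y\rangle\equiv2\pmod4$, because the correction terms are divisible by $4$. Hence $\langle v,v\rangle\in2\Z_2^\times$ for every lift.
\item Integrality of $R_v$ needs $\langle x,v\rangle\langle v,v\rangle^{-1}\cdot 2\in\Z_2$. Since $\langle v,v\rangle$ has valuation exactly $1$, this holds for all $x\in L_n$, whatever $\langle v,L_n\rangle$ is. So $R_v\in\mO_n(\Z_2)$.
\item Finally $\det R_v=-1$ and $f\circ R_v=f$ because $v\in\ker f$.
\end{itemize}

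The only delicate point is the valuation bookkeeping in Case 2, which the computation in the second bullet handles.
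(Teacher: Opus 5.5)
Your proof is correct and is essentially the paper's argument: reflect in a primitive lift $v\in\ker f$ of a vector of $\ker\bar f$ whose weight is $\not\equiv 0\pmod 4$, so that $v_2(\langle v,v\rangle)\le 1$ and $r_v\in\mO_n(\Z_2)$ fixes $f$ and has determinant $-1$. The paper skips your case split by applying Lemma \ref{lem:binary-code}(a) directly. Also, as your own computation shows, $\langle v,v\rangle\equiv\wt(\bar v)\pmod 4$ for every lift $v$, so the lift-dependence ``obstacle'' you flag in Case 2 does not actually arise.
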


\begin{proof}
Put $r=d(H)$ and $K=\ker f$. Let $\bar{f}:\bar {L}_n\to H/2H$ denote the reduction of $f$ modulo $2$. Then
$$
\bar{K}:=(K+2L_n)/2L_n=\ker(\bar{f}).
$$
Indeed, the inclusion $\bar{K}\subseteq\ker(\bar{f})$ is clear. Conversely, if
$f(x)\in 2H$, choose $y\in L_n$ such that $2f(y)=f(x)$. Then
$x-2y\in K$. Hence
$$
\dim_{\F_2}\bar K=n-r>n/2.
$$
By Lemma \ref{lem:binary-code}(a), choose
$\bar{v}\in\bar{K}$ such that
$$
\wt(\bar v)\not\equiv 0\pmod 4.
$$
Choose a lift $v\in K$ of $\bar v$. Since $\bar v\neq 0$, the vector
$v$ is primitive. Moreover,
$$
\langle v,v\rangle
\equiv \wt(\bar v) \not\equiv 0 \pmod 4.
$$
Thus $2/\langle v,v\rangle\in\Z_2$ and the reflection
$$
r_v(x)=x-2\frac{\langle x,v\rangle}{\langle v,v\rangle}v
$$
preserves $L_n$.

Over $\Q_2$, we have the orthogonal decomposition
$$
L_n\otimes\Q_2=v^\perp\oplus\Q_2v.
$$
The reflection $r_v$ acts as the identity on $v^\perp$ and as
multiplication by $-1$ on $\Q_2v$. Hence
$$
\det(r_v)=-1.
$$
Finally, since $v\in K=\ker f$, for every $x\in L_n$,
$$
f(r_v(x)-x)
=
-2\frac{\langle x,v\rangle}{\langle v,v\rangle}f(v)
=
0.
$$
Thus $r_v$ stabilizes $f$.
\end{proof}

\begin{rmk}
Suppose $n > 2d(H)$ and $n$ is sufficiently large for \eqref{eq:dyadic-orbit-bijection} to hold. By Lemma \ref{lem:dyadic-stabilizer}, every $\mO_n(\Z_2)$-orbit
in \eqref{eq:dyadic-orbit-bijection} is already a single
$\SO_n(\Z_2)$-orbit. Hence the same bijection holds with
$\mO_n(\Z_2)$ replaced by $\SO_n(\Z_2)$.
\end{rmk}


\section{The support of orthogonal cokernels at \texorpdfstring{$p=2$}{p=2}}\label{Sec8}

When $p=2$, a perfect skew-symmetric pairing is not necessarily alternating. The following elementary classification describes the support. 

\begin{prop}\label{prop:dyadic-support}
If a finite abelian $2$-group $M$ admits a perfect skew-symmetric pairing $\lambda: M\times M\to\Q_2/\Z_2$, then
$$
M\cong(\Z/2\Z)^a\oplus G\oplus G
$$
for some $a\in\{0,1\}$ and some finite abelian $2$-group $G$.
\end{prop}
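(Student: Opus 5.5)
The plan is to measure how far $\lambda$ is from being alternating, split off at most one $\Z/2\Z$ summand to absorb that failure, and then use the fact (\cite[Proposition 2]{Del01}) that a group with a perfect alternating pairing is square.

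\emph{Step 1: the defect functional.} For $x\in M$, skew-symmetry gives $2\lambda(x,x)=0$, so $\lambda(x,x)\in\frac12\Z_2/\Z_2\cong\F_2$. Define $\rho(x)=\lambda(x,x)$. Expanding,
$$\rho(x+y)=\rho(x)+\rho(y)+\lambda(x,y)+\lambda(y,x)=\rho(x)+\rho(y),$$
so $\rho\in\Hom(M,\frac12\Z_2/\Z_2)$. If $\rho=0$, then $\lambda$ is alternating and $M\cong G\oplus G$, giving $a=0$.

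\emph{Step 2: locating the defect.} Suppose $\rho\neq0$. Since $\lambda$ is perfect, there is a unique $c\in M$ with $\rho(x)=\lambda(x,c)$ for all $x$. This $c$ satisfies $2c=0$ and $c\ne0$. Evaluating at $x=c$ gives $\lambda(c,c)=\rho(c)$.
\begin{itemize}
\item If $\rho(c)=1/2$, then $\langle c\rangle\cong\Z/2\Z$ is nondegenerate for $\lambda$. Perfectness then gives an orthogonal splitting $M=\langle c\rangle\oplus c^\perp$, with the complement $c^\perp$ of order $|M|/2$. On $c^\perp$ we have $\rho(x)=\lambda(x,c)=0$, so the restriction of $\lambda$ to $c^\perp$ is perfect and alternating. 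Hence $c^\perp\cong G\oplus G$, and $M\cong\Z/2\Z\oplus G\oplus G$, giving $a=1$.
\item If $\rho(c)=0$, the idea is to modify $\lambda$ so that it becomes alternating. Look for a pairing of the form $\lambda'(x,y)=\lambda(x,y)+\beta(x,y)$, where $\beta$ is a symmetric pairing whose diagonal is $\rho$, and check that $\lambda'$ is still perfect. The natural candidate is $\beta(x,y)=\lambda(x,c)\lambda(y,c)$-type data, but this expression is not bilinear into $\Q_2/\Z_2$. Instead, choose $d$ with $2d$ related to $c$, and take $\beta(x,y)=\rho(x)\cdot 2\mu(y)$ for a suitable $\mu$, arranged so that the result is symmetric.
\end{itemize}

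\emph{Step 3: the main obstacle.} The subcase $\rho(c)=0$ in Step 2 is where I expect the real difficulty. It is cleaner to argue directly by structure instead of modifying $\lambda$. Decompose $M$ via the standard classification of nondegenerate forms on $2$-groups, as in Wall or Miranda–Morrison: $M$ is an orthogonal sum of blocks of rank one, $\Z/2^k$, and of rank two, $\Z/2^k\oplus\Z/2^k$.
\begin{itemize}
\item A rank-one block $\Z/2^k$ carrying a skew-symmetric perfect form has generator $x$ with $\lambda(x,x)=u/2^k$ for a unit $u$. Skew-symmetry forces $2\lambda(x,x)=0$, so $k=1$.
\item Two such $\Z/2\Z$ blocks combine into one summand of the shape $G\oplus G$ with $G=\Z/2\Z$.
\end{itemize}
So at most one unpaired $\Z/2\Z$ survives, and the remaining blocks are already square. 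The orthogonal-splitting argument I would carry out by hand: pick $x$ of maximal order $2^k$. Either $\lambda(x,x)$ generates $2^{-k}\Z/\Z$, which forces $k=1$ and lets us split off $\langle x\rangle$, or some $y$ has $\lambda(x,y)$ of exact order $2^k$, in which case $\langle x,y\rangle$ is a nondegenerate $\Z/2^k\oplus\Z/2^k$ block (after adjusting $y$) and splits off orthogonally. The hardest point is verifying nondegeneracy of the two-generator block when $k=1$ and the diagonal entries are nonzero. This is exactly the situation where two $\Z/2$ summands are recombined into a square, so it does not obstruct the conclusion.

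The proposition then follows by induction on $|M|$, with the count of $\Z/2\Z$ summands reduced modulo pairs.
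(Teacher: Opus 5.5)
Your route is correct in outline and differs from the paper's, but as written the key step is justified incorrectly. Steps 1--2 can be dropped, since they leave the case $\lambda(c,c)=0$ open. The appeal to Wall or Miranda--Morrison is also not available: those results classify symmetric (linking or quadratic) forms, and a skew-symmetric form on a $2$-group is symmetric only when $2\lambda=0$. So everything rests on your by-hand splitting in Step 3.

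That splitting works, but you misplace its crux. If $k=1$ and $\lambda(x,x)=\lambda(y,y)=\lambda(x,y)=\tfrac12$, then $x+y$ is orthogonal to both $x$ and $y$. That block is therefore degenerate and cannot be split off, and saying that two $\Z/2\Z$ summands ``recombine'' does not address this. The configuration never arises, because your first case already disposes of $k=1$ with $\lambda(x,x)=\tfrac12$.

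What you actually need to check is the following. In the second case, since $2^k$ is the exponent of $M$, $2^k$ times the Gram matrix of $(x,y)$ is
\[
\begin{pmatrix} a & u\\ -u & b\end{pmatrix}\ \text{over }\Z/2^k\Z,\qquad u\in(\Z/2^k\Z)^\times,\quad a,b\in 2^{k-1}\Z/2^k\Z,
\]
with $a=0$ when $k=1$. Hence $ab=0$: for $k\ge2$ because $2k-2\ge k$, and for $k=1$ because $a=0$. The determinant is then the unit $u^2$, so $\langle x,y\rangle\cong(\Z/2^k\Z)^2$ and the restricted pairing is nondegenerate. Since $\lambda(z,x),\lambda(z,y)\in 2^{-k}\Z/\Z$ for every $z\in M$, you can subtract an element of $\langle x,y\rangle$ from $z$ to land in $\langle x,y\rangle^\perp$. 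This gives $M=\langle x,y\rangle\perp\langle x,y\rangle^\perp$, and the complement again carries a perfect skew-symmetric pairing. No adjustment of $y$ is needed. Induction then gives $M\cong(\Z/2\Z)^s\oplus\bigoplus_j(\Z/2^{k_j}\Z)^2$, and you take $a\equiv s\pmod 2$.

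With this repair, your argument is genuinely different from the paper's, which does no splitting at all. For each $e\ge2$ the paper takes $V_e=M[2^e]/(M[2^{e-1}]+2M[2^{e+1}])$, whose dimension is the multiplicity $m_e$ of $\Z/2^e\Z$ in $M$. It shows that $2^{e-1}\lambda$ induces a nondegenerate form on $V_e$, which is alternating because $2\lambda(x,x)=0$ and $e\ge2$. Hence every $m_e$ with $e\ge2$ is even, while $m_1$ is unconstrained, so $a\equiv m_1\pmod 2$.

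The paper's argument is shorter, needs no case analysis, and makes visible that the failure of $\lambda$ to be alternating is confined to the layer $e=1$. Yours yields more: an orthogonal normal form of $(M,\lambda)$ itself, not just the isomorphism type of $M$.

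Your Step 2 dichotomy on $\lambda(c,c)$ is not wasted either. It is exactly what the paper uses, with $\kappa=[Tw/2]$ in the role of $c$, in the proof of Proposition \ref{prop:spinor-torsion-parity} to get the parity of $v_2(|M|)$. By itself, though, it cannot determine the group structure when $\lambda(c,c)=0$.
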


\begin{proof}
Write
$$
M\cong\bigoplus_{e\ge1}(\Z/2^e\Z)^{m_e}.
$$
For $e\ge2$, let
$$
V_e=M[2^e]/(M[2^{e-1}]+2M[2^{e+1}]).
$$
Then $\dim_{\F_2}V_e=m_e$. 
Recall that a finite-dimensional vector space admitting a nondegenerate alternating bilinear form has even dimension. Therefore, it suffices to show that, for each $e\ge 2$, the vector space $V_e$ admits such a form.
The map
$$
(\bar x,\bar y)\mapsto
2^{e-1}\lambda(x,y)\in\tfrac12\Z/\Z\cong\F_2
$$
defines a well-defined pairing 
\begin{equation}
\label{eq:Ve-pairing}
V_e \times V_e \to \frac{1}{2}\Z/\Z.
\end{equation}
Indeed, if $a\in M[2^{e-1}]$, then
$$
2^{e-1}\lambda(a,y)=\lambda(2^{e-1}a,y)=0,
$$
while if $a=2z$ with $z\in M[2^{e+1}]$, then
$$
2^{e-1}\lambda(a,y)
=
2^e\lambda(z,y)
=
\lambda(z,2^ey)
=
0.
$$
The same argument applies to the second variable.
Moreover, skew-symmetry gives $2\lambda(x,x)= 0$, and for $e\ge 2$ this implies $2^{e-1}\lambda(x,x) = 0$ in $\Q_2/\Z_2$, so the pairing in \eqref{eq:Ve-pairing} is alternating. Now it remains to check nondegeneracy. Perfectness gives
\begin{equation}\label{eq:torsion-orthogonal-complement}
M[2^e]^\perp=2^eM.
\end{equation}
The inclusion $2^eM\subset M[2^e]^\perp$ is immediate, and equality follows because multiplication by $2^e$ has kernel $M[2^e]$, so both sides of \eqref{eq:torsion-orthogonal-complement} have order $|M|/|M[2^e]|$. If $x \in M[2^e]$ satisfies 
$$
2^{e-1}\lambda(x,y)=0
$$
for all $y\in M[2^e]$, then
$$
2^{e-1}x\in M[2^e]^\perp=2^eM.
$$
Thus $2^{e-1}x=2^ez$ for some $z\in M$, whence
$$
x-2z\in M[2^{e-1}] \quad\text{and}\quad 2^{e+1}z=0,
$$
so $\bar{x} = 0$ in $V_e$. This completes the proof.
\end{proof}

Let 
$$\delta= \begin{cases}
0 & \text{if $n\equiv 0 \pmod 2$}, \\
1 & \text{if $n\equiv 1 \pmod 2$}. 
\end{cases}
$$
Combining Proposition \ref{prop3a}, Proposition \ref{prop:dyadic-support}, and Theorem \ref{thm5a}, if $A_n$ is Haar-random in $\SO_n(\Z_2)$, then almost surely 
\begin{equation}\label{eq:dyadic-total-support}
\cok(A_n-I_n)\cong
\Z_2^\delta\oplus(\Z/2\Z)^a\oplus G\oplus G
\end{equation}
for some $a\in\{0,1\}$ and some finite abelian $2$-group $G$. Equivalently,
$$
\cok(A_n-I_n)\in
\mc S_{\sq}^{(\delta)}
\sqcup
\mc S_{\nsq}^{(\delta)}
$$
almost surely.


\section{Spinor parity and the dyadic parity-resolved moments}
\label{Sec9}
We first introduce an $\F_2$-valued invariant of $A\in\SO_n(\Z_2)$, namely the spinor character (Definition \ref{defn:spinor character}), and show that it detects whether the torsion subgroup of $\cok(A-I_n)$ is of the form $G\oplus G$ or $\Z/2\Z\oplus G\oplus G$. Next, for each surjection $f$, we determine whether this invariant is trivial on the stabilizer of $f$. Finally, we apply a weighted version of Burnside's lemma to compute separately the moments corresponding to the two cases. For simplicity, put $L = L_n = \Z_2^n$.


\subsection{A parity character on the orthogonal group} \label{Sub91}
Fix a characteristic vector $w\in L$ as in Section \ref{Sub64}:
\begin{equation}\label{eq:characteristic-vector}
\langle w,x \rangle \equiv \langle x,x \rangle\pmod2
\quad(x\in L).
\end{equation}
Every isometry preserves the unique characteristic class in $L/2L$, so $gw-w\in2L$ for every $g\in \mO(L)$. Define
\begin{equation}\label{eq:delta-w}
\delta_w(g)=\left\langle w,\frac{gw-w}{2}\right\rangle \pmod2.
\end{equation}

Put $V=L\otimes\Q_2$. For every $v\in V$ with $\langle v,v \rangle\ne0$, let
$$
r_v(x)=x-2\frac{\langle x,v \rangle}{\langle v,v\rangle}v
$$
be the reflection in $v$ as in the proof of Lemma \ref{lem:dyadic-stabilizer}. By the Cartan--Dieudonné theorem \cite[43:3 Theorem]{OMe73}, every $g\in\mO(V)$ can be written as a product $g=r_{v_1}\cdots r_{v_m}$. Define the \emph{spinor norm} by
$$
\mathrm{sp}_{-}:\mO(V)\rightarrow\Q_2^\times/(\Q_2^\times)^2,
\quad
\mathrm{sp}_{-}(g)=\prod_{i=1}^m (-\langle v_i,v_i \rangle)\cdot(\Q_2^\times)^2.
$$
The spinor norm $\mathrm{sp}_{-}$ is independent of the chosen reflection factorization \cite[\S55]{OMe73}. Compared with the usual spinor norm for $\langle \cdot, \cdot \rangle$, the displayed product has the extra factor $(-1)^m=\det(g)$; hence the two normalizations agree on $\SO(V)$. For $g \in \mO(V)$, put
$$
d(g)=\frac{1-\det g}{2}\in\{0,1\} = \F_2,
\quad
s(g)=v_2(\mathrm{sp}_{-}(g))\pmod2 \in \F_2.
$$
Both $d$ and $s$ are homomorphisms from $\mO(V)$ to $\F_2$.
\begin{lem}
\label{lem:characteristic-spinor}
For every $g\in \mO(L)$,
\begin{equation}\label{eq:characteristic-spinor}
\delta_w(g)=d(g)+s(g) \text{ in $\F_2$}.
\end{equation}
In particular, if $g\in \SO(L)$, then
\begin{equation*}
\delta_w(g)=v_2(\mathrm{sp}_{-}(g))\pmod2.
\end{equation*}
\end{lem}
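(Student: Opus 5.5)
Both sides of \eqref{eq:characteristic-spinor} will be shown to be homomorphisms $\mO(L)\to\F_2$ that agree on a generating set. The right-hand side $d+s$ is a homomorphism by the remarks preceding the lemma. So the plan has three steps: first show that $\delta_w$ is a homomorphism on $\mO(L)$; then check that it agrees with $d+s$ on generators; finally invoke a generation result for $\mO(L)$.

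\emph{Step 1: $\delta_w$ is a homomorphism.} For $g,h\in\mO(L)$ we write
$$
\frac{ghw-w}{2}=g\,\frac{hw-w}{2}+\frac{gw-w}{2}.
$$
Pairing with $w$, we need $\langle w,g y\rangle\equiv\langle w,y\rangle \pmod 2$, where $y=(hw-w)/2\in L$. This holds because $\langle w,gy\rangle\equiv\langle gy,gy\rangle=\langle y,y\rangle\equiv\langle w,y\rangle \pmod 2$ by \eqref{eq:characteristic-vector}. A similar computation shows that $\delta_w$ does not depend on the choice of the characteristic vector $w$. Indeed, replacing $w$ by $w+2u$ changes $\delta_w(g)$ by $\langle w,gu-u\rangle+\langle 2u,\cdot\rangle$-type terms, and these vanish mod $2$ by the same identity $\langle w,gu\rangle\equiv\langle w,u\rangle$. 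So we may take $w=c_n$.

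\emph{Step 2: agreement on reflections.} We use the fact that $\mO(L)$ for the unimodular lattice $L_n=\Z_2^n$ is generated by reflections $r_v$ with $v\in L$ primitive and $2/\langle v,v\rangle\in\Z_2$. This is O'Meara's generation theorem for unimodular $\Z_2$-lattices (92:4), possibly supplemented by Eichler/Siegel transformations in small rank. If we need to avoid this, an alternative is to use the continuity of both sides together with density in $\mO(L)$ of the subgroup generated by such reflections.

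For such a reflection $d(r_v)=1$, and $s(r_v)=v_2(\langle v,v\rangle)$ since $-\langle v,v\rangle$ is the spinor norm. A direct computation gives
$$
\frac{r_vw-w}{2}=-\frac{\langle w,v\rangle}{\langle v,v\rangle}\,v,
\qquad
\delta_w(r_v)\equiv\frac{\langle w,v\rangle^2}{\langle v,v\rangle}\pmod 2.
$$
Now split into cases on $\langle v,v\rangle$.

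If $\langle v,v\rangle$ is a unit, then $\langle w,v\rangle\equiv\langle v,v\rangle$ is odd, so $\delta_w(r_v)=1=d+s$.

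If $\langle v,v\rangle=2u$ with $u$ a unit, then $\langle w,v\rangle$ is even, say $2k$. Then $\delta_w(r_v)\equiv 4k^2/(2u)=2k^2/u\equiv 0$, and $d+s=1+1=0$.

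\emph{Step 3: conclusion.} Since both sides are homomorphisms agreeing on generators, they agree on all of $\mO(L)$. The special case $g\in\SO(L)$ follows because $d(g)=0$ there.

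The main obstacle is the generation statement in Step 2. O'Meara's theorem for dyadic unimodular lattices may require extra generators, for instance when $L$ is even, but $L_n$ is odd so reflections should suffice. If this citation is problematic, the fallback is a direct argument: reduce an arbitrary $g$ by reflections until it fixes $e_1$, then induct on $n$ using the odd unimodular complement.
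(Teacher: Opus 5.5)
Your Steps 1 and 3 and the reflection computation in Step 2 are correct and follow the paper. Your additivity argument via $\langle w,gy\rangle\equiv\langle gy,gy\rangle=\langle y,y\rangle\equiv\langle w,y\rangle$ is a fine variant of the paper's $\langle w,gz\rangle=\langle g^{-1}w,z\rangle$ with $g^{-1}w\equiv w \pmod 2$. The gap is the generation statement everything rests on. You assert that $\mO(L_n)$ is generated by integral reflections alone, but you do not establish this: your citation is hedged, and you yourself allow that Eichler/Siegel transformations may be needed. Yet Step 2 never checks the identity on those, so as written the argument is incomplete. The density fallback does not help, since density of the reflection subgroup is the same generation problem in another form. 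The paper handles this point by citing O'Meara--Pollak, which gives generation by integral reflections \emph{and} the Eichler transformations $E_{u,x}$. It then checks $E_{u,x}$ as well: $E_{u,x}=E_{u,x/2}^{2}$ in $\mO(V)$ forces $d(E_{u,x})=s(E_{u,x})=0$, while $\delta_w(E_{u,x})=-\langle x,x\rangle b^2/4\equiv 0 \pmod 2$ with $b=\langle w,u\rangle\in 2\Z_2$. Adding that one verification completes your proof.

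Alternatively, your claim is true for the odd lattice $L_n$, and your inductive fallback can be carried out, but its key step is not automatic. For $y=ge_1$, the reflections $r_{e_1\mp y}$ can both fail to be integral: if $\langle e_1,y\rangle$ is odd and $\bar y\ne\bar e_1$, then both $e_1\mp y$ are primitive of norm divisible by $4$. What you need is that integral reflections act transitively on norm-one vectors $y$ with $\bar y\ne\bar c_n$; these include every $ge_1$ when $n\ge 2$, since $\bar g$ fixes $\bar c_n$.

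If $x,y$ have norm $1$ and $y=x+2z$, then $\langle x,z\rangle=-\langle z,z\rangle$. When $\langle z,z\rangle$ is odd, $r_z$ is integral and $r_z(x)=y$. When it is even, $x+z$ has unit norm $1-\langle z,z\rangle$ and $r_y r_{x+z}(x)=y$. So it suffices to realize each possible reduction $\bar y$, whose weight $k$ satisfies $k\equiv 1\pmod 4$ and $k\le n-1$. For $k=1$, use a permutation, which is a product of the integral reflections $r_{e_i-e_j}$. For $k\ge 5$, apply $r_v$ to $e_1$, where $v$ is the $0/1$ vector supported on a set of size $k+1$ containing the index $1$ (so $\langle v,v\rangle=k+1\equiv 2\pmod 4$), then permute. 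Then $h^{-1}g$ fixes $e_1$ for some $h$ in the reflection group, and induction on $e_1^\perp\cap L_n\cong L_{n-1}$ finishes.

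This route avoids Eichler transformations but depends on the specific shape of $L_n$. The paper's route is uniform for unimodular lattices, at the cost of one extra generator check.
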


\begin{proof}
First, note that $\delta_w$ is a homomorphism. If $z=(hw-w)/2\in L$, then
$$
\frac{ghw-w}{2}=gz+\frac{gw-w}{2}.
$$
Since $g^{-1}w\equiv w\pmod2$,
$$
\langle w,gz\rangle =\langle g^{-1}w,z\rangle \equiv \langle w,z\rangle\pmod2,
$$
and hence $\delta_w(gh)=\delta_w(g)+\delta_w(h)$.

The generation theorem of O'Meara and Pollak \cite[8.1]{OP65} states, in the present unimodular case, that $\mO(L)$ is generated by integral reflections and the Eichler transformations in \eqref{eq:eichler-transformation}. By the homomorphism properties, it suffices to verify \eqref{eq:characteristic-spinor} on these generators.

Let $r_v$ be an integral reflection. Scaling $v$ does not change $r_v$, so take $v\in L$ primitive (i.e., $v \notin 2L$). Unimodularity gives $\langle L,v \rangle =\Z_2$. Since $r_v(L)=L$, we have
$$
\frac{2}{\langle v,v \rangle}\in\Z_2
$$
so $e:=v_2(\langle v,v\rangle)\in\{0,1\}$.
Put $a=\langle w,v\rangle$. Equation \eqref{eq:characteristic-vector} gives $a\equiv \langle v,v\rangle\pmod2$, and
$$
\frac{r_vw-w}{2}=-\frac{a}{\langle v,v\rangle}v.
$$
Therefore
$$
\delta_w(r_v) \equiv -\frac{a^2}{\langle v,v\rangle}
\equiv
\begin{cases}
1& \text{ if $e=0$},\\
0& \text{ if $e=1$}.
\end{cases}
\pmod2.
$$
Since $d(r_v)=1$ and $s(r_v)=e$, identity \eqref{eq:characteristic-spinor} holds for integral reflections $r_v$. 

Now consider
\begin{equation}\label{eq:eichler-transformation}
E_{u,x}(z)=z+\langle z,u\rangle x-\langle z,x\rangle u
-\frac{\langle x,x\rangle }2\langle z,u\rangle u,
\end{equation}
where $u,x\in L$, $\langle u,u\rangle =\langle u,x\rangle =0$, and $\langle x,x\rangle \in2\Z_2$. The same formula is defined for $x\in u^\perp\subset V$. Direct substitution gives
\begin{equation*}
\langle E_{u,x}z,E_{u,x}z'\rangle =\langle z,z'\rangle ,\quad
E_{u,x}E_{u,y}=E_{u,x+y}
\end{equation*}
for $x,y\in u^\perp$ and $z,z'\in V$. In particular, $E_{u,x}^{-1}=E_{u,-x}$, and the integrality assumptions $u,x\in L$ and $\langle x,x\rangle \in2\Z_2$ imply that $E_{u,x}\in\mO(L)$. Over $\Q_2$, we have
\begin{equation*}
E_{u,x}=E_{u,x/2}^{\,2}\quad\text{in }\mO(V).
\end{equation*}
The targets of the determinant and the spinor norm have exponent $2$, so
\begin{equation*}
\det(E_{u,x})=1,\quad \mathrm{sp}_{-}(E_{u,x})=1.
\end{equation*}

Set $b=\langle w,u\rangle$. Since $\langle u,u\rangle =0$, equation \eqref{eq:characteristic-vector} gives $b\in2\Z_2$. Then \eqref{eq:eichler-transformation} yields
\begin{equation*}
\langle w,E_{u,x}w-w\rangle =-\frac{\langle x,x\rangle b^2}{2},\quad
\delta_w(E_{u,x})=-\frac{\langle x,x\rangle b^2}{4}\equiv0\pmod2.
\end{equation*}
Thus $\delta_w(E_{u,x})=d(E_{u,x})+s(E_{u,x})$. This proves \eqref{eq:characteristic-spinor} on all the generators and completes the proof.
\end{proof}

\begin{defn}\label{defn:spinor character}
For every $n\ge2$, define the continuous character 
$$
\eta:\SO_n(\Z_2)\rightarrow\F_2,
\quad
\eta(A)=v_2(\mathrm{sp}_{-}(A))\pmod2,
$$
which we call the \emph{spinor character}.
\end{defn}
Note that $\eta$ is surjective. Indeed, the integral reflections $r_{e_1}$ and $r_{e_1+e_2}$ have spinor norms represented by $-1$ and $-2$, respectively. Their product belongs to $\SO_n(\Z_2)$ and has spinor character value $1$. Consequently, each fiber of $\eta$ has Haar measure $1/2$.

We next connect the spinor character with the underlying cokernel. 

\begin{prop}
\label{prop:spinor-torsion-parity}
Let $n\ge2$. For every $A\in\SO_n(\Z_2)$,
\begin{equation}\label{eq:eta-torsion-parity}
\eta(A)=v_2(|\cok(A-I_n)_{\tors}|)\pmod2.
\end{equation}
If $n$ is even and $A_n$ is Haar-random in $\SO_n(\Z_2)$,
then almost surely $\cok(A_n-I_n)$ is finite and
\begin{equation}\label{eq:eta-det-parity}
\eta(A_n)=v_2(\det(A_n-I_n))\pmod2.
\end{equation}
\end{prop}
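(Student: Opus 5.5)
The plan is to identify the pairing $\lambda_A$ of Theorem \ref{thm5a} with Wall's form, and then use the Zassenhaus--Wall formula for the spinor norm. The second assertion then follows immediately from Proposition \ref{prop3a}.

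Set $T=A-I_n$, $V=L\otimes\Q_2$, $K=\ker T$ and $W=T(V)=K^\perp$. Recall Wall's bilinear form on $W$,
$$
\beta_A(x,y)=\langle z,y\rangle,\qquad Tz=x,
$$
which is well defined and nondegenerate on $W$; this is the computation already done in the proof of Theorem \ref{thm5a}. The classical theorem of Zassenhaus (in Wall's formulation) states that for $g\in\mO(V)$ the spinor norm of $g$ equals the discriminant $\det(\beta_g)$ of this form on $W$, up to a sign and a factor of $\det(g)$ coming from the normalization.

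Since we only need $v_2(\mathrm{sp}_{-}(A))$ modulo $2$, every normalization factor that is a unit ($\pm1$, $\det A$) is irrelevant. Thus I will first prove
$$
\eta(A)\equiv v_2(\det\beta_A)\pmod 2,
$$
with $\det\beta_A$ computed in any $\Q_2$-basis of $W$; changing the basis changes it only by squares. If I want to avoid quoting Zassenhaus, I would prove this by induction on the length of a Cartan--Dieudonné factorization of $A$, following Wall's original argument. This normalization check is the step I expect to be the main obstacle.

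Next I compute $\det\beta_A$ using the lattice $N:=L\cap W$, which is a direct summand of $L$ with $\Q_2$-span $W$. Choose a $\Z_2$-basis $y_1,\dots,y_m$ of $N$ and let $B=(\beta_A(y_i,y_j))$. By \eqref{eq5a}, $N/T(L)=\cok(T)_{\tors}$. The proof of Theorem \ref{thm5a} shows more precisely the following:
\begin{enumerate}
\item $\beta_A(T(L),N)\subseteq\Z_2$, which is the well-definedness of $\lambda_A$ in the second variable;
\item conversely, any $x\in N$ with $\beta_A(x,N)\subseteq\Z_2$ lies in $T(L)$, which is the perfectness argument.
\end{enumerate}
Hence $T(L)$ is exactly the left $\beta_A$-dual lattice $\{x\in W:\beta_A(x,N)\subseteq\Z_2\}$; here one also uses that $T(L)\subseteq N$. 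The dual lattice has the basis given by the rows of $B^{-1}$, so
$$
[N:T(L)]=|\det B|_2^{-1}.
$$
This gives $v_2(\det B)=-v_2(|\cok(A-I_n)_{\tors}|)$, and combined with the first step it proves \eqref{eq:eta-torsion-parity}.

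For \eqref{eq:eta-det-parity}, let $n$ be even, so $\det A_n=1=(-1)^n$. Proposition \ref{prop3a} then gives $\rank_{\Z_2}\cok(A_n-I_n)=0$ almost surely. In that case $\cok(A_n-I_n)$ is finite of order $|\det(A_n-I_n)|_2^{-1}$, and \eqref{eq:eta-torsion-parity} yields \eqref{eq:eta-det-parity}.

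As a sanity check on normalizations, I would test the formula on $A=r_{e_1}r_{e_1+e_2}$. In that case the torsion of $\cok(A-I)$ should have odd $2$-adic valuation, matching $\eta(A)=1$.
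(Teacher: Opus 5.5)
Your argument works and takes a genuinely different route from the paper. The paper never uses Zassenhaus's theorem. It fixes a characteristic vector $w$, sets $\kappa=[Tw/2]\in M[2]$, and shows $\lambda_A(\kappa,[x])=\lambda_A([x],[x])$ and $\lambda_A(\kappa,\kappa)=\tfrac12\delta_w(A)+\Z_2$. It then splits $M$ along $\langle\kappa\rangle$ and its orthogonal complement to get $v_2(|M|)\equiv\delta_w(A)\pmod 2$. The identification $\delta_w=\eta$ on $\SO_n(\Z_2)$ is Lemma \ref{lem:characteristic-spinor}, proved by checking the O'Meara--Pollak generators of $\mO(L)$.

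You instead extract from the perfectness of $\lambda_A$ the exact identity $|M|=|\det B|_2$, via a dual-lattice computation. You then pass to the spinor norm through a purely rational theorem over $\Q_2$, which lets you avoid the integral generation theorem altogether. The cost is dependence on an external result whose normalization must be pinned down. The paper's $\kappa$/$\delta_w$ machinery is self-contained and is needed later anyway: Proposition \ref{prop:spinor-stabilizer-criterion} uses $\delta_u=\eta$.

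The normalization is not just up to units, so that step needs fixing.
\begin{itemize}
\item Zassenhaus's formula $\theta(g)=\mathrm{disc}$ holds for the Wall form built from the polar form $2\langle\cdot,\cdot\rangle$, the one for which $\theta(r_v)=\langle v,v\rangle$. Your $\beta_A$ is built from $\langle\cdot,\cdot\rangle$, so $\det\beta_A\equiv\pm2^{-\dim W}\,\mathrm{sp}_{-}(A)$ modulo squares.
\item For a single reflection, $z=-v/2$ solves $Tz=v$, so $\beta(v,v)=-\langle v,v\rangle/2$. Its $2$-adic valuation has the wrong parity relative to $\mathrm{sp}_{-}(r_v)=-\langle v,v\rangle$. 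Hence your stated version "for $g\in\mO(V)$, up to sign and $\det g$" is false.
\item This is harmless here because $\det g=(-1)^{\dim(g-1)V}$ for every isometry. One way to see this is Scherk's theorem; another is that even-size Jordan blocks at eigenvalue $1$ occur with even multiplicity. So $\dim W$ is even for $A\in\SO_n(\Z_2)$, and $2^{\dim W}$ is a square. You need to state this explicitly.
\item Your rotation sanity check has $\dim W=2$, so it cannot detect the discrepancy.
\end{itemize}

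There are also two minor slips.
\begin{itemize}
\item Set $N^\#=\{x\in W:\beta_A(x,N)\subseteq\Z_2\}$. Since $T(L)=N^\#\subseteq N$, the matrix $B^{-1}$ is integral and $[N:T(L)]=|\det B|_2$, not its inverse. Your conclusion $v_2(\det B)=-v_2(|M|)$ is the correct one.
\item To get $N^\#\subseteq T(L)$ you need the perfectness argument for arbitrary $x\in W$, not only for $x\in N$. It applies verbatim, since the proof of Theorem \ref{thm5a} writes $x=Tz$ and never uses $x\in L$.
\end{itemize}
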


\begin{proof}
Let $A\in\SO_n(\Z_2)$, $T=A-I_n$, and $\Lambda=L\cap T(V)$. By \eqref{eq5a} and Theorem \ref{thm5a}, the group
\begin{equation*}
M=\cok(T)_{\tors}=\Lambda/TL
\end{equation*}
has a perfect skew-symmetric pairing $\lambda_A: M \times M \to \Q_2/\Z_2$ given by
\begin{equation*}
\lambda_A([x],[y])=\langle z,y\rangle +\Z_2,\quad Tz=x,
\end{equation*}
where $[x]$ denotes the class of $x\in\Lambda$ in $M$. Since $Tw\in2L$, we have $Tw/2\in\Lambda$, and
\begin{equation*}
\kappa=[Tw/2]\in M[2].
\end{equation*}
For $x=Tz\in\Lambda$, the isometry property of $A$ gives $2\langle z,Tz\rangle=-\langle Tz,Tz\rangle$. Together with \eqref{eq:characteristic-vector}, this implies that
\begin{equation*}
\lambda_A(\kappa,[x])
=\frac{\langle w,x\rangle }2+\Z_2
=-\frac{\langle x,x\rangle}2+\Z_2
=\lambda_A([x],[x]).
\end{equation*}
Moreover,
\begin{equation*}
\lambda_A(\kappa,\kappa)=\frac{\langle w,Tw\rangle}4+\Z_2.
\end{equation*}
By \eqref{eq:delta-w}, this value is $0$ when $\delta_w(A)=0$ and $\frac12+\Z_2$ when $\delta_w(A)=1$.

Set $C=\angles{\kappa}$, and let $C^\perp$ be its orthogonal complement in $M$ with respect to $\lambda_A$. The restriction of $\lambda_A$ to $C^\perp$ is alternating, since $\lambda_A(m,m)=\lambda_A(\kappa,m)=0$ for $m\in C^\perp$. Since $\lambda_A$ is perfect, $(C^\perp)^\perp=C$ and $[M:C^\perp]=|C|$.
If $\delta_w(A)=0$, then $C\subseteq C^\perp$, and the induced pairing on $C^\perp/C$ is perfect alternating. Hence $|M|=|C|^2|C^\perp/C|$ is a square. If $\delta_w(A)=1$, then $C$ has order $2$ and its restricted pairing is nondegenerate. Thus we have the orthogonal direct sum $M=C\perp C^\perp$, and the restricted pairing on $C^\perp$ is perfect alternating. Hence $|M|=2|C^\perp|$ is twice a square. Consequently,
\begin{equation*}
v_2(|M|)\equiv\delta_w(A)\pmod2.
\end{equation*}
Lemma \ref{lem:characteristic-spinor} gives $\delta_w(A)=\eta(A)$, proving \eqref{eq:eta-torsion-parity} for every $A\in\SO_n(\Z_2)$.

Now let $A_n$ be Haar-random in $\SO_n(\Z_2)$ and assume that
$n$ is even. Proposition \ref{prop3a} shows that
$\cok(A_n-I_n)$ is finite almost surely. On this event, Smith normal form gives
\begin{equation*}
v_2(|\cok(A_n-I_n)|)
=v_2(\det(A_n-I_n)),
\end{equation*}
so \eqref{eq:eta-det-parity} follows from \eqref{eq:eta-torsion-parity}.
\end{proof}


\subsection{The spinor image of a surjection stabilizer}\label{subsec:spinor-stabilizer}
Let $L= L_n$, $\bar{L}=\bar L_n$, $c_n$ and $\bar c_n$ be as in Section \ref{Sec6}. For a surjection $f:L_n\twoheadrightarrow H$, write $K=\ker f$. Recall that $f$ is characteristic if the functional $\bar x\mapsto\langle\bar c_n,\bar x\rangle = \langle \bar x, \bar x\rangle$ factors as
$$
\bar L_n\xrightarrow{\,\bar f\,}H/2H
\xrightarrow{\,\chi_f\,}\F_2
$$
for a nonzero functional $\chi_f:H\to\F_2$; after identifying $\Hom_{\F_2}(\bar L_n,\F_2)$ with $\bar L_n$ by the standard bilinear form, this is the identity $\bar c_n=\bar f^*\chi_f = \chi_f\bar f$.

We first isolate the vector needed to change spinor parity.

\begin{lem}
\label{lem:norm-two-kernel}
Let $r=d(H)$ and assume $n>2r+2$. Then $K$ contains a primitive vector $z$ satisfying
$$
\langle z,z\rangle\equiv2\pmod4.
$$
If $f$ is characteristic, the weaker bound $n>2r$ suffices.
\end{lem}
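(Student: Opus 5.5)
The plan is to mirror the proof of Lemma \ref{lem:dyadic-stabilizer}. We reduce modulo $2$, find a suitable vector in the binary code $\bar K=\ker(\bar f)$ by Lemma \ref{lem:binary-code}(b), and then lift it to $K$.

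As in the proof of Lemma \ref{lem:dyadic-stabilizer}, we have $\bar K=(K+2L)/2L=\ker\bar f$ with $\dim_{\F_2}\bar K=n-r$. Any lift $z\in K$ of a nonzero $\bar z\in\bar K$ is primitive, and $\langle z,z\rangle\equiv\wt(\bar z)\pmod 4$, since odd squares are $1$ and even squares are $0$ modulo $4$. So it suffices to find $\bar z\in\bar K$ with $\wt(\bar z)\equiv2\pmod4$. Lifting is automatic because $\bar K$ is exactly the image of $K$.

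\emph{Characteristic case.} Here $\bar c_n=\chi_f\bar f$ as a functional. For $\bar x\in\bar K$ this gives $\langle\bar c_n,\bar x\rangle=\chi_f(\bar f(\bar x))=0$, so $\bar K\subset\bar c_n^\perp$. Since $n>2r$, we have $\dim\bar K=n-r>n/2$. Lemma \ref{lem:binary-code}(b) then gives the desired vector directly.

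\emph{General case, $n>2r+2$.} Let $C=\bar K\cap\bar c_n^\perp$. This is a subspace of codimension at most $1$ in $\bar K$, so $\dim C\ge n-r-1$. I would apply Lemma \ref{lem:binary-code}(b) to $C$, which requires $\dim C>n/2$. This holds as soon as $n-r-1>n/2$, i.e.\ $n>2r+2$. Then $C\subset\bar c_n^\perp$ contains a vector of weight $\equiv2\pmod4$, which is what we need.

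I expect no serious obstacle. The only points to check are the weight-to-norm congruence modulo $4$ and that $\ker\bar f$ equals the reduction of $K$, and the latter was already shown in Lemma \ref{lem:dyadic-stabilizer}. The threshold $2r+2$ has slack (the bound $n>2r+1$ already suffices), so the stated bound holds comfortably.
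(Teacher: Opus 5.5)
Your argument is correct and is essentially the paper's proof. You pass to $\bar K=\ker\bar f$, apply Lemma \ref{lem:binary-code}(b) to $\bar K$ when $f$ is characteristic and to $\bar K\cap\bar c_n^\perp$ otherwise, and then lift using $\langle z,z\rangle\equiv\wt(\bar z)\pmod 4$. Running the second case for every $f$ rather than only noncharacteristic ones is harmless.

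However, your closing remark that $n>2r+1$ already suffices is false, and it contradicts your own computation, since $n-r-1>n/2$ is exactly $n>2r+2$. For a counterexample at $n=2r+2$, take $n=8$, $r=3$, $H=(\Z/2\Z)^3$. Let $f$ be reduction modulo $2$ followed by a surjection $\F_2^8\to\F_2^3$ with kernel $C\oplus\F_2e_1$, where $C$ is the extended Hamming $[8,4,4]$ code. The even-weight vectors of $\ker\bar f$ are exactly those of $C$, all of weight $0$, $4$ or $8$. So $K$ contains no primitive $z$ with $\langle z,z\rangle\equiv2\pmod4$, and you should drop the remark.
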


\begin{proof}
As in the proof of Lemma \ref{lem:dyadic-stabilizer}, put
$$
\bar K=(K+2L_n)/2L_n=\ker(\bar f).
$$
We have $\dim_{\F_2}(\bar{K})=n-r$.
If $f$ is characteristic and $n>2r$, then $\bar K\subset\bar c_n^\perp$.
Moreover,
$$
\dim_{\F_2}\bar K=n-r>\frac n2.
$$
Hence, by Lemma \ref{lem:binary-code}(b), $\bar K$ contains
a vector $\bar{z}$ of weight congruent to $2$ modulo $4$.

If $f$ is noncharacteristic and $n>2r+2$, let
$$
C=\bar K\cap\bar c_n^\perp.
$$
Since
$$
\dim_{\F_2}C\ge n-r-1>\frac n2,
$$
Lemma \ref{lem:binary-code}(b) gives a vector $\bar{z}$ in $C$ of
weight congruent to $2$ modulo $4$. 

In either case, a vector $\bar z$ of weight $2$ modulo $4$ lifts to a primitive $z\in K$ with
$$
\langle z,z\rangle \equiv\wt(\bar z)\equiv2\pmod4.
$$
\end{proof}

\begin{prop}
\label{prop:spinor-stabilizer-criterion}
Let $f:L_n\twoheadrightarrow H$, put $r=d(H)$, and assume $n>2r+2$. Then
\begin{equation}\label{eq:spinor-stabilizer-criterion}
\eta|_{\Stab_{\SO_n(\Z_2)}(f)}=0
\quad\Longleftrightarrow\quad
\begin{cases}
f\text{ is characteristic},\\
\chi_f|_{H[2]}=0.
\end{cases}
\end{equation}
Here $\Stab_{\SO_n(\Z_2)}(f)$ denotes the stabilizer of $f$ in $\SO_n(\Z_2)$.
\end{prop}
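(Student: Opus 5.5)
We prove both directions by exploiting Lemma \ref{lem:characteristic-spinor}: on $\SO_n(\Z_2)$ we have $\eta=\delta_w$. This lets us compute $\eta$ on stabilizer elements through the characteristic vector rather than through spinor norms. Throughout, take $w=c_n$ and put $K=\ker f$.

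\emph{($\Leftarrow$)} Assume $f$ is characteristic and $\chi_f|_{H[2]}=0$, and let $U\in\Stab_{\SO_n(\Z_2)}(f)$. Since $fU=f$, the vector $z=(Uw-w)/2\in L$ satisfies $2f(z)=f(Uw)-f(w)=0$, so $f(z)\in H[2]$. The characteristic property gives $\langle w,z\rangle\equiv\langle z,z\rangle\equiv\chi_f(f(z))\pmod 2$. Because $f(z)\in H[2]$ and $\chi_f$ vanishes on $H[2]$, this is $0$. Hence $\delta_w(U)=0$, and therefore $\eta(U)=0$.

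\emph{($\Rightarrow$)} We argue by contrapositive: in each remaining case we construct $U\in\Stab_{\SO_n(\Z_2)}(f)$ with $\eta(U)=1$. The building blocks are integral reflections $r_v$ with $v\in K$; these lie in $\Stab_{\mO_n}(f)$. From the proof of Lemma \ref{lem:characteristic-spinor}, $\delta_w(r_v)=1$ if $\langle v,v\rangle$ is odd and $\delta_w(r_v)=0$ if $\langle v,v\rangle\equiv 2\pmod 4$. By Lemma \ref{lem:dyadic-stabilizer}, $K$ contains a primitive $v$ with $\langle v,v\rangle\not\equiv 0\pmod 4$. By Lemma \ref{lem:norm-two-kernel}, $K$ contains a primitive $z$ with $\langle z,z\rangle\equiv 2\pmod 4$. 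We then combine these as follows.
\begin{enumerate}
\item If $K$ contains a primitive vector $v_1$ of odd norm, take $U=r_{v_1}r_z\in\SO_n(\Z_2)$. It fixes $f$, and $\eta(U)=\delta_w(U)=1+0=1$.
\item Otherwise every vector of $K$ has even norm. Then $\bar K\subseteq\bar c_n^\perp$, which makes $f$ characteristic. In this case we must have $\chi_f|_{H[2]}\ne0$, so we need a different construction.
\end{enumerate}

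For case (2), choose $h\in H[2]$ with $\chi_f(h)=1$ and pick $y\in L$ with $f(y)=h$. Then $2y\in\ker f$, while $\langle y,y\rangle\equiv\chi_f(h)=1$ is odd. Consider $r_y$. It preserves $L$ because $\langle y,y\rangle$ is a unit, and
\[
f(r_y x)=f(x)-2\frac{\langle x,y\rangle}{\langle y,y\rangle}h=f(x),
\]
since $2h=0$. So $r_y\in\Stab(f)$, it has odd norm, and hence $\delta_w(r_y)=1$. Taking $U=r_y r_z\in\SO_n(\Z_2)$ gives $\eta(U)=1$.

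This construction of $r_y$ also handles case (1) uniformly, but case (1) as written needs no extra hypothesis. The remaining case, $f$ noncharacteristic, falls under (1). Indeed, $\bar c_n\notin\im\bar f^*$ is equivalent to $\bar c_n\notin\bar K^\perp$, so some $\bar v\in\bar K$ has odd weight, and a lift of it to $K$ has odd norm.

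The main obstacle is keeping the case split clean. We must confirm that "every kernel vector has even norm" is exactly the characteristic condition. We must also check that the reflection $r_y$ in a vector not lying in $K$ still fixes $f$, which works precisely because $f(y)\in H[2]$.
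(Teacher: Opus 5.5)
Your proof is correct, and the $(\Rightarrow)$ direction is the paper's argument. Your case (1) is exactly the noncharacteristic case, since $K$ contains an odd-norm vector iff $\bar c_n\notin\bar K^\perp=\im\bar f^*$. Your case (2) is the paper's reflection in a lift of some $h\in H[2]$ with $\chi_f(h)=1$. In both cases you multiply by the norm-$2$ reflection $r_z$ from Lemma \ref{lem:norm-two-kernel}, as the paper does.

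The $(\Leftarrow)$ direction is where you take a different route.
\begin{itemize}
\item The paper lifts $\chi_f$ to $\widetilde\chi_f:H\to\Z/4\Z$; this is where $\chi_f|_{H[2]}=0$ is needed, to define the lift on the $\Z/2\Z$ summands. By unimodularity it then takes a characteristic vector $u$ with $\langle u,x\rangle\equiv\widetilde\chi_f(f(x))\pmod 4$. It shows the whole $\mO_n(\Z_2)$-stabilizer of $f$ fixes $u$ modulo $4L_n$, and concludes $\delta_u(U)=0$.
\item You keep the fixed vector $w=c_n$ and note that $z=(Uw-w)/2$ satisfies $2f(z)=0$. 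Hence $\delta_w(U)\equiv\langle z,z\rangle\equiv\chi_f(f(z))=0$.
\end{itemize}

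Your route is shorter and avoids the auxiliary $\Z/4\Z$-valued lift and the unimodularity step. It also makes the criterion transparent. On the stabilizer, $U\mapsto f((Uw-w)/2)$ is a homomorphism into $H[2]$, and $\delta_w(U)=\chi_f(f((Uw-w)/2))$. So vanishing of $\chi_f$ on $H[2]$ is visibly the right condition, and the reflection $r_y$ in case (2) is exactly a stabilizer element where this homomorphism hits some $h$ with $\chi_f(h)=1$. The paper's route instead yields an explicit characteristic vector that the stabilizer fixes modulo $4$, which is a slightly stronger structural statement than needed here.

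Two cosmetic points. Your remark that the $r_y$ construction ``handles case (1) uniformly'' only makes sense if you mean reflecting in any odd-norm $y$ with $f(y)\in H[2]$, since $\chi_f$ is undefined for noncharacteristic $f$. Also, the odd-norm vector you attribute to Lemma \ref{lem:dyadic-stabilizer} comes from its proof, not its statement. Neither point affects the argument.
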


\begin{proof}
We have
$$
\bar K=(K+2L_n)/2L_n = \ker(\bar{f}).
$$
Hence finite-dimensional duality gives
$$
\im(\bar f^*)=\bar K^\perp.
$$

Suppose first that $f$ is noncharacteristic. Then $\bar c_n\notin\bar K^\perp$, so there is $\bar v\in\bar K$ with
$$
\langle\bar c_n,\bar v\rangle=1.
$$
Choose a lift $v\in K$. Since $\langle \bar{c}_n, \bar{v}\rangle = \langle \bar{v}, \bar{v} \rangle$, we have $\langle v,v\rangle \in\Z_2^\times$. Thus the reflection $r_v$ preserves $L_n$, stabilizes $f$, has determinant $-1$, and satisfies $s(r_v) = 0$. By Lemma \ref{lem:norm-two-kernel}, choose $z\in K$ with $v_2(\langle z,z\rangle )=1$. Then
$$
r_vr_z\in\Stab_{\SO_n(\Z_2)}(f)
\quad\text{and}\quad
\eta(r_vr_z) = 1.
$$

Now suppose that $f$ is characteristic and $\chi_f|_{H[2]}\ne0$. Choose $y\in H[2]$ with $\chi_f(y)=1$ and a lift $v\in L_n$ with $f(v)=y$. Since
$$
\langle v,v\rangle \equiv\chi_f(f(v))=1\pmod2,
$$
the reflection $r_v$ preserves $L_n$. It stabilizes $f$, because
$$
f(r_vx-x)=-2\frac{\langle x,v\rangle }{\langle v,v\rangle }y=0.
$$
Multiplying by the reflection $r_z$, where $z$ is given by
Lemma \ref{lem:norm-two-kernel}, gives
$$
r_vr_z\in\Stab_{\SO_n(\Z_2)}(f)
\quad \text{and} \quad
\eta(r_vr_z)=1.
$$
Thus $\eta|_{\mathrm{Stab}_{\SO_n(\Z_2)}(f)}$ is nontrivial in this case.

Finally suppose that $f$ is characteristic and $\chi_f|_{H[2]}=0$. Write
$$
H=\bigoplus_i\Z/2^{\lambda_i}\Z\,h_i.
$$
Define a homomorphism
$$
\widetilde\chi_f:H\to\Z/4\Z
$$
by setting $\widetilde\chi_f(h_i)=0$ when $\lambda_i=1$, and, when
$\lambda_i\ge2$, setting $\widetilde\chi_f(h_i)=0$ or $1$ according as
$\chi_f(h_i)=0$ or $1$. This defines a homomorphism because
$4\mid 2^{\lambda_i}$ whenever $\lambda_i\ge2$. Its reduction modulo $2$
is $\chi_f$: this is clear when $\lambda_i\ge2$, while for
$\lambda_i=1$ it follows from the assumption $\chi_f|_{H[2]}=0$.

By unimodularity there is $u\in L_n$, unique modulo $4L_n$, such that
$$
\langle u,x\rangle \equiv\widetilde\chi_f(f(x))\pmod4
\quad(x\in L_n).
$$
We have $\bar{u} = \bar{c}_n$ in $L_n/2L_n$. 
If $U\in\Stab_{\mO_n(\Z_2)}(f)$, then
$$
\langle U^{-1}u-u,x\rangle 
=\langle u,Ux\rangle -\langle u,x\rangle \equiv0\pmod4
$$
for every $x\in L_n$. Unimodularity gives
$$
U^{-1}u\equiv u\pmod{4L_n},
$$
and hence also $Uu\equiv u\pmod{4L_n}$. For $U\in\SO_n(\Z_2)$ stabilizing $f$, equation \eqref{eq:delta-w} gives $\delta_u(U)=0$. Lemma \ref{lem:characteristic-spinor} then gives $\eta(U)=0$. This proves \eqref{eq:spinor-stabilizer-criterion}.
\end{proof}


\subsection{Character-weighted Burnside and the two moment families}\label{subsec:twisted-burnside}

For a finite abelian $2$-group $H$, put $r=d(H)$. Let $A_n$ be Haar-random in $\SO_n(\Z_2)$ and define
\begin{align*}
\M_{0,n}(H)&=\bE(\mathbf{1}_{\{\eta(A_n)=0\}}\#\Sur(\cok(A_n-I_n),H)),\\
\M_{1,n}(H)&=\bE(\mathbf{1}_{\{\eta(A_n)=1\}}\#\Sur(\cok(A_n-I_n),H)).
\end{align*}
We assume that $n>2r+2$ and that $n$ is sufficiently large for \eqref{eq:dyadic-orbit-bijection} to hold. By Proposition \ref{prop:spinor-stabilizer-criterion}, the character $(-1)^\eta$ is trivial on a stabilizer exactly for the characteristic orbits with $\chi|_{H[2]}=0$. Their number is (see Section \ref{Sub72})
\begin{equation}\label{eq:special-orbits}
S(H)
:=\sum_{\substack{0\ne\chi\\\chi|_{H[2]}=0}}|D_\chi|
=(2^{r-t(H)}-1)\frac{|\Gamma(H)|}{2^{r+2}}
=|\Gamma(H)|(2^{-t(H)-2}-2^{-r-2}).
\end{equation}

On any $\SO_n(\Z_2)$-orbit $\Omega$ in $\Sur(L_n, H)$, Lemma \ref{lem4a} gives ordinary fixed-point average $1$. Its $(-1)^\eta$-weighted average is $1$ on the $S(H)$ special orbits and $0$ on every other orbit. Since
$$
\mathbf{1}_{\{\eta=0\}}=\frac{1+(-1)^\eta}{2},
\quad
\mathbf{1}_{\{\eta=1\}}=\frac{1-(-1)^\eta}{2},
$$
we obtain directly from \eqref{eq:dyadic-total-orbits} and \eqref{eq:special-orbits} (cf. \eqref{eq:cf1} and \eqref{eq:cf2})
\begin{equation}\label{eq:parity-M} 
\begin{aligned}
\M_{0,n}(H)
&=\frac{T(H)+S(H)}2
=|\Gamma(H)|\left(\frac34-2^{-r-2}\right),\\
\M_{1,n}(H)
&=\frac{T(H)-S(H)}2
=|\Gamma(H)|\left(\frac34-2^{-t(H)-2}\right).
\end{aligned}
\end{equation}
Taking $H=0$, we obtain
$$
\bP(\eta(A_n)=0)=\bP(\eta(A_n)=1)=\frac{1}{2}.
$$
By Proposition \ref{prop:spinor-torsion-parity}, for $A \in \SO_n(\Z_2)$
$$
\eta(A) = v_2(|\cok(A-I_n)_{\tors}|)\pmod2.
$$
Therefore, dividing the two identities in
\eqref{eq:parity-M} by $1/2$ proves Theorem \ref{thm1c}.

Adding the two identities in \eqref{eq:parity-M} gives
$$
\bE(\#\Sur(\cok(A_n-I_n),H))
=|\Gamma(H)|\left(\frac32-2^{-t(H)-2}-2^{-r-2}\right).
$$

For an $\mO_n(\Z_2)$-orbit $\Omega$, Lemma \ref{lem:dyadic-stabilizer}
shows that the determinant character is nontrivial on its stabilizer.
Hence, applying Lemma \ref{lem4a} to the full group
$\mO_n(\Z_2)$, we have
$$
\int_{\mO_n(\Z_2)} \det(A)\#\Fix_\Omega(A)\,dA=0.
$$
On the other hand, the same lemma with the trivial character gives
$$
\int_{\mO_n(\Z_2)} \#\Fix_\Omega(A)\,dA=1.
$$
For $\tau\in\{\pm1\}$, let
$$
\mO_n(\Z_2)^\tau=\{A\in\mO_n(\Z_2):\det(A)=\tau\}.
$$
Since
$$
\mathbf{1}_{\mO_n(\Z_2)^\tau}(A)
=\frac{1+\tau\det(A)}2,
$$
we obtain 
$$
\int_{\mO_n(\Z_2)^\tau} \#\Fix_\Omega(A)\,dA 
= \int_{\mO_n(\Z_2)} \mathbf{1}_{\mO_n(\Z_2)^\tau}(A) \#\Fix_\Omega(A)\,dA 
= \frac12.
$$
As $\mO_n(\Z_2)^\tau$ has Haar measure $1/2$, its normalized
fixed-point average is $1$. Then summing over the $T(H)$ orbits proves Theorem \ref{thm1a}(b).

Finally, returning to the Haar-random matrix $A_n\in\SO_n(\Z_2)$, let $\delta\in\{0,1\}$ be determined by $\delta\equiv n\pmod2$. Proposition \ref{prop:spinor-torsion-parity} identifies the two supports.
By \eqref{eq:dyadic-total-support}, almost surely
$$
\cok(A_n-I_n)\cong
\Z_2^\delta\oplus(\Z/2\Z)^a\oplus G\oplus G
$$
for some $a\in\{0,1\}$ and some finite abelian $2$-group $G$. Its torsion subgroup has order $2^a|G|^2$, and hence almost surely 
\begin{align*}
\eta(A_n)=0\quad &\Longleftrightarrow\quad
\cok(A_n-I_n)\in\mc S_{\sq}^{(\delta)},\\
\eta(A_n)=1\quad &\Longleftrightarrow\quad
\cok(A_n-I_n)\in\mc S_{\nsq}^{(\delta)}.
\end{align*}

The next section proves moment determinacy on these two supports and identifies the limiting laws.


\section{\texorpdfstring{Moment inversion on the base-$4$ grid}{Moment inversion on the base-4 grid}}\label{Sec10}

We adapt Wood's multivariable interpolation argument from
\cite[Lemma 8.1 and Theorem 8.2]{Woo17}. Although these results are
stated for primes, their proofs apply verbatim with any real number
$q>1$ in place of the prime. We only need the case $q=4$. For $m\ge 1$, let $\mc P_m$ be the set of partitions
$$
\lambda=(\lambda_1\ge\cdots\ge\lambda_m\ge0),
$$
and for $\lambda, \mu \in \mc{P}_m$, define
$$
\lambda\cdot\mu=\sum_{i=1}^m\lambda_i\mu_i.
$$

\begin{thm}[{\cite[Theorem 8.2]{Woo17}}]
\label{thm:base4-determinacy}
Let $x_\mu,y_\mu\ge0$ for $\mu\in\mc P_m$. Suppose that, for every $\lambda\in\mc P_m$,
\begin{equation*}
\sum_\mu x_\mu4^{\lambda\cdot\mu}
=\sum_\mu y_\mu4^{\lambda\cdot\mu}
=C_\lambda,
\end{equation*}
and that
\begin{equation*}
C_\lambda\le F^m4^{\frac12\sum_i\lambda_i(\lambda_i-1)}
\end{equation*}
for some $F>0$. Then $x_\mu=y_\mu$ for every $\mu \in \mc P_m$.
\end{thm}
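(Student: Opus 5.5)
The statement is \cite[Theorem 8.2]{Woo17} with the prime replaced by $q=4$. The plan is to re-run Wood's interpolation argument and to check, step by step, that only $q>1$ is ever used. The first step is a change of variables that removes the ordering constraint on $\lambda$. Put $e_k=\lambda_k-\lambda_{k+1}$ for $1\le k\le m$ (with $\lambda_{m+1}=0$), so that $e=(e_1,\ldots,e_m)$ ranges over all of $\Z_{\ge0}^m$. Put $s_k(\mu)=\mu_1+\cdots+\mu_k$. Then
$$
\lambda\cdot\mu=\sum_{k=1}^m e_k\,s_k(\mu),\qquad 4^{\lambda\cdot\mu}=\prod_{k=1}^m Z_k(\mu)^{e_k},\quad Z_k(\mu)=4^{s_k(\mu)}.
$$
Thus the moment $C_\lambda$ is the value at $\lambda$ of the generating object $\sum_\mu x_\mu Z(\mu)^e$ in free monomials. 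The map $\mu\mapsto (s_1(\mu),\ldots,s_m(\mu))$ is injective, with image the tuples satisfying $s_k-s_{k-1}\ge s_{k+1}-s_k\ge 0$.

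The second step constructs, for a fixed $\mu^0\in\mc P_m$, an entire function
$$
\Phi(Z_1,\ldots,Z_m)=\sum_{e\in\Z_{\ge0}^m}a_e Z^e
$$
with $\Phi(Z(\mu))=\delta_{\mu,\mu^0}$ for every $\mu\in\mc P_m$. Following Wood's Lemma 8.1, I take $\Phi$ to be a product of one-variable factors. In the variable $Z_k$, use the $q$-type product
$$
\prod_{j\ge 0,\ j\ne s_k(\mu^0)}\bigl(1-Z_k4^{-j}\bigr),
$$
normalized by its value at $Z_k=4^{s_k(\mu^0)}$. This factor vanishes at every $Z_k=4^{j}$ with $j\ne s_k(\mu^0)$. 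Since $Z(\mu)$ determines $\mu$, the product over $k$ vanishes at every $Z(\mu)$ with $\mu\ne\mu^0$. The $q$-binomial theorem, valid for any real $q>1$, gives explicit coefficients for each factor. These are bounded by $c\cdot 4^{-e_k(e_k-1)/2}4^{e_k s_k(\mu^0)}$ times a constant depending only on $\mu^0$ and $\prod_i(1-4^{-i})^{-1}$.

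The third step proves absolute convergence of $\sum_e |a_e|\,C_{\lambda(e)}$ and then applies Fubini. Because $x_\mu,y_\mu\ge0$, absolute convergence lets me write
$$
x_{\mu^0}=\sum_\mu x_\mu\Phi(Z(\mu))=\sum_e a_e C_{\lambda(e)}=\sum_\mu y_\mu \Phi(Z(\mu))=y_{\mu^0}.
$$
I expect the convergence estimate to be the main obstacle. The hypothesis bounds $C_\lambda$ by $F^m4^{\frac12\sum_i\lambda_i(\lambda_i-1)}$, where $\lambda_i=e_i+\cdots+e_m$. The coefficient decay, however, is only $\prod_k 4^{-e_k(e_k-1)/2}$, and $\sum_i\binom{\lambda_i}{2}$ contains cross terms $e_ke_l$ that the naive product bound does not beat.

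To handle this, I would first try Wood's remedy. Replace the raw product with a triangular, inductive construction: peel off the variable $Z_m$ first, then argue inductively on $m$ with the moments of the conditioned system. Alternatively, use factors in the original variables $4^{\mu_i}$, whose exponents $\lambda_i$ match the Gaussian bound coordinate by coordinate. In that version one has to check that the resulting bound $\sum_\lambda F^m4^{\sum_i\binom{\lambda_i}{2}}\prod_i4^{-\binom{\lambda_i}{2}}4^{\lambda_i\mu^0_i}\cdot(\text{polynomially small})$ still converges. Wood's extra factors $\prod_{j}(1-4^{j-\lambda_i})$-type, coming from finite truncations at $j\le \mu^0_i+$const, supply the needed geometric decay. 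Every estimate in that bookkeeping uses only $\sum_i 4^{-i}<\infty$ and the $q$-binomial identities, so it transfers verbatim to $q=4$.
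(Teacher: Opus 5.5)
Your top-level reduction is the same as the paper's. The paper gives no independent argument; it imports Wood's interpolation lemma and theorem (Wood's Lemma~8.1 and Theorem~8.2) and asserts that only $q>1$ is used. The argument you actually write, however, does not prove the statement, because its one substantive step is left open.

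In your scheme everything rests on producing, for each $\mu^0\in\mathcal P_m$, coefficients $a_\lambda$ indexed by partitions $\lambda\in\mathcal P_m$ such that $\sum_\lambda a_\lambda 4^{\lambda\cdot\mu}=\delta_{\mu\mu^0}$ for all $\mu\in\mathcal P_m$ and $\sum_\lambda |a_\lambda|\,4^{\sum_i\binom{\lambda_i}{2}}<\infty$. Once you have this, the Fubini step is formal. Your product candidate does not have this property, and the failure is not loose bookkeeping.

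Already for $m=1$, the bound you state, $|a_e|\ll 4^{-\binom{e}{2}}4^{es}$, leaves $\sum_e 4^{es}=\infty$. The true coefficients of $\prod_{j\ne s}(1-Z4^{-j})$ are $\asymp 4^{-\binom{e+1}{2}}$. Convergence comes only from the single factor $4^{-e}$ gained by omitting one zero, so there is no slack at all.

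For $m\ge 2$ the product $\prod_k\phi_k(Z_k)$ fails outright. Look at the diagonal $\lambda=(e,\ldots,e)$, i.e.\ $e_1=\cdots=e_{m-1}=0$ and $e_m=e$. There the coefficient is $\prod_{k<m}\phi_k(0)$ times the $e$-th coefficient of $\phi_m$, hence $\asymp 4^{-\binom{e+1}{2}}$. But admissible data can have $C_{(e,\ldots,e)}\ge c\,4^{m\binom{e}{2}}$: put mass proportional to $4^{-m\binom{k+1}{2}}$ at $\mu=(k,\ldots,k)$, which satisfies the hypothesis because $\sum_i\lambda_i^2\ge|\lambda|^2/m$. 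Then $\sum_e|a_e|\,C_{\lambda(e)}=\infty$ and Fubini is unavailable.

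The two remedies are only named, and neither works as described.

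For the first remedy, products of one-variable factors in the original variables $4^{\mu_i}$ expand into monomials $4^{l\cdot\mu}$ with $l$ ranging over all of $\mathbb{Z}_{\ge0}^m$. For $l\notin\mathcal P_m$, the quantity $\sum_\mu x_\mu 4^{l\cdot\mu}$ is not one of the given moments; rearrangement only bounds it by $C_{l^{\downarrow}}$. So $\sum_\mu x_\mu\Phi(\mu)$ cannot be rewritten through the $C_\lambda$. Discarding the non-partition exponents instead destroys the vanishing at $\mu\ne\mu^0$.

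For the triangular induction, the natural step is to fix $\lambda_m=\ell$ and write $\lambda=\lambda'+(\ell,\ldots,\ell)$. This turns the data into the $\mathcal P_{m-1}$-moments of the pushforward of $x_\mu 4^{\ell|\mu|}$. Those moments are bounded only by $F^m4^{m\binom{\ell}{2}}\,4^{\ell|\lambda'|}\,4^{\sum_i\binom{\lambda'_i}{2}}$, and the factor $4^{\ell|\lambda'|}$ lies outside the inductive hypothesis. Growth of this kind genuinely fails to determine the law. For $m=1$, splitting the coefficients of $\prod_{j\ge0}(1-z4^{-j})$ by sign gives two distinct nonnegative sequences with identical moments of size $\asymp 4^{\lambda}4^{\binom{\lambda}{2}}$.

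So what is missing is exactly the multivariable interpolation lemma. You can either invoke Wood's lemma and verify, as the paper asserts, that its proof uses only $q>1$. Or you can give an explicit family $a_\lambda$ supported on $\mathcal P_m$ that meets the absolute-summability bound above.
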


\subsection{Application to the dyadic support}\label{subsec:base-four}

For $n\ge2$, let $A_n$ be Haar-random in $\SO_n(\Z_2)$.
For $b\in\{0,1\}$, let $C_{n,b}$ have the conditional distribution of
$\cok(A_n-I_n)$ given $\eta(A_n)=b$, and put
$$
T_{n,b}:=(C_{n,b})_{\tors}.
$$
By Proposition \ref{prop:spinor-torsion-parity},
$\eta(A_n)=\ve(A_n)$, so conditioning on $\eta(A_n)=b$ is equivalent
to conditioning on $\ve(A_n)=b$. For $n\equiv\delta\pmod2$ with
$\delta\in\{0,1\}$, Proposition \ref{prop3a} gives
$$
C_{n,b}\cong\Z_2^\delta\oplus T_{n,b}
\quad\text{(almost surely).}
$$

For a finite abelian $2$-group $M$, let
$$
\rho_i(M)=\dim_{\F_2}(2^{i-1}M/2^iM)
\quad(i\ge1).
$$
Thus $\rho(M)=(\rho_1(M)\ge\rho_2(M)\ge\cdots)$ is a partition. Let $\lambda = \rho(H)$. Then
$$
\#\Hom(M,H)=2^{\sum_i\lambda_i\rho_i(M)}.
$$
If $M=G\oplus G$ and $\nu=\rho(G)$, then
\begin{equation}\label{eq:square-base4}
\#\Hom(M,H)=4^{\lambda\cdot\nu}.
\end{equation}
If $M=\Z/2\Z\oplus G\oplus G$, then
\begin{equation}\label{eq:nonsquare-base4}
\#\Hom(M,H)=2^{\lambda_1}4^{\lambda\cdot\nu}.
\end{equation}
For $X=\Z_2^\delta\oplus T$,
$$
\#\Hom(X,H)=|H|^\delta\#\Hom(T,H).
$$
Thus, after division by the known factors $|H|^\delta$ and, in the nonsquare case, $2^{\lambda_1}$, both conditional moment problems are base-$4$ moment problems.

We record the required growth bound. For each fixed finite abelian
$2$-group $J$ and all sufficiently large $n$, the conditional formulas
\eqref{eq1c} give
\begin{equation} \label{eq:Sur-moment-inequality}
\bE(\#\Sur(C_{n,b},J))\le\frac32|\Gamma(J)|. 
\end{equation}
Moreover, \eqref{eq:gamma-decomposition} implies that
$$
|\Gamma(J)|=2^{d(J)}|J|\,|\wedge^2 J|,
$$
where $\wedge^2 J$ denotes the second exterior power of $J$ over $\Z$. If $H$ has exponent dividing $2^m$ and $\lambda=\rho(H)$, then for $n\equiv\delta\pmod2$ with $\delta \in \{0,1\}$,
$$
\bE(\#\Hom(T_{n,b},H))
=|H|^{-\delta}\sum_{J\le H}\bE(\#\Sur(C_{n,b},J)).
$$
Wood's subgroup estimate \cite[Lemma 7.5]{Woo17} gives\footnote{Wood \cite{Woo17} says that $K$ is of type $\mu=(\mu_1,\ldots,\mu_r)$ if
$K\cong\bigoplus_{j=1}^r\Z/2^{\mu_j}\Z$. Thus, in our notation, $\rho(K) = \mu'$, where $\mu'$ denotes the conjugate partition of $\mu$.}
$$
\sum_{J\le H}|\wedge^2 J|
\le F_0^m2^{\frac12\sum_i\lambda_i(\lambda_i-1)}
$$
for a constant $F_0>0$. Since $d(J)\le\lambda_1$ and $|J|\le2^{\sum_i\lambda_i}$,
for all sufficiently large $n$ (depending on $H$),
$$
\bE(\#\Hom(T_{n,b},H))
\le\frac32F_0^m
2^{\lambda_1+\sum_i\lambda_i+\frac12\sum_i\lambda_i(\lambda_i-1)}.
$$
We have
$$
\lambda_1 + \sum_i\lambda_i \le 2\sum_i\lambda_i
\le\frac12\sum_i\lambda_i(\lambda_i-1)+3m,
$$
by $2k\le k(k-1)/2+3$ for $k\ge1$. Absorbing the factor $2^{3m}$ into the constant gives
\begin{equation}\label{eq:base4-growth}
\bE(\#\Hom(T_{n,b},H))
\le F_1^m4^{\frac12\sum_i\lambda_i(\lambda_i-1)}.
\end{equation}
Note that the same bound applies to the torsion part of any probability law on $\mc S_{\sq}^{(\delta)}$ or $\mc S_{\nsq}^{(\delta)}$ whose surjection moments are given by the corresponding right-hand side of \eqref{eq1c} for every finite target.

\begin{thm}
\label{thm:dyadic-conditional-determinacy}
Fix $\delta\in\{0,1\}$. As $n\to\infty$ through integers satisfying
$n\equiv\delta\pmod2$, the laws
$$
\mc L(C_{n,0}) \quad\text{and}\quad \mc L(C_{n,1})
$$
converge to probability laws supported on $\mc S_{\sq}^{(\delta)}$ and $\mc S_{\nsq}^{(\delta)}$, respectively. Moreover, each limiting law is uniquely determined by its $H$-moments for all finite abelian $2$-groups $H$: it is the unique probability law on the corresponding support whose $H$-moments are given by the corresponding right-hand side of \eqref{eq1c} for every finite abelian $2$-group $H$.
\end{thm}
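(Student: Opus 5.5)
The plan is to reduce everything to the torsion parts $T_{n,b}$ and work with $\#\Hom$-moments on the base-$4$ grid, using tightness plus uniqueness (Theorem \ref{thm:base4-determinacy}) to force convergence.

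\emph{Step 1: rewrite the moments as base-$4$ moments.} Almost surely $C_{n,b}\cong\Z_2^\delta\oplus T_{n,b}$. Here $T_{n,0}$ is square and $T_{n,1}\cong\Z/2\Z\oplus G\oplus G$. For each finite $H$ with $\lambda=\rho(H)$, the stable formulas \eqref{eq1c} give the moment $\bE(\#\Hom(T_{n,b},H))=|H|^{-\delta}\sum_{J\le H}\bE(\#\Sur(C_{n,b},J))$. For $n$ large this equals a constant $C^{(b)}_\lambda$ independent of $n$. Write $x^{(n)}_\nu=\bP(T_{n,b}\cong G\oplus G \text{ or } \Z/2\Z\oplus G\oplus G,\ \rho(G)=\nu)$. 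By \eqref{eq:square-base4} and \eqref{eq:nonsquare-base4}, we get $\sum_\nu x^{(n)}_\nu 4^{\lambda\cdot\nu}=C^{(b)}_\lambda$ in the square case. In the nonsquare case the same holds with $C^{(b)}_\lambda 2^{-\lambda_1}$.

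\emph{Step 2: restrict to bounded exponent and pass to limits.} Fix $m$ and restrict to $H$ of exponent dividing $2^m$, so $\lambda\in\mc P_m$. Since $\#\Hom(G\oplus G,H)$ depends only on $\rho(G)$ truncated to $m$ parts, we group by $\nu\in\mc P_m$. The bounded moment at $H=(\Z/2^m\Z)^{k}$ grows in $\nu$, so Markov's inequality gives tightness of the truncated laws (the law of $T_{n,b}/2^mT_{n,b}$). Take any subsequential limit $x_\nu$ along $n\equiv\delta$. By Fatou and uniform integrability, the identities persist in the limit: we apply the moment identity for $H$ with $\lambda$ replaced by a larger partition (e.g. $\lambda+(1,\dots,1)$) to get a dominating bound, since $4^{\lambda\cdot\nu}/4^{(\lambda+\mathbf 1)\cdot\nu}\to0$ as $|\nu|\to\infty$. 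Here I must take care because $m$ is fixed; instead I would dominate using $H'$ with $\lambda'_i=\lambda_i+1$, which stays in $\mc P_m$. Thus every subsequential limit satisfies $\sum_\nu x_\nu4^{\lambda\cdot\nu}=C_\lambda$. The growth bound \eqref{eq:base4-growth} holds for $C_\lambda$, and multiplying by $2^{-\lambda_1}$ does not hurt it. So Theorem \ref{thm:base4-determinacy} shows all subsequential limits coincide, and the mod-$2^m$ laws converge.

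\emph{Step 3: assemble and conclude.} Consistency over $m$ gives convergence of $\bP(T_{n,b}\cong M)$ for each finite $M$, because $M$ is determined by its reduction mod $2^m$ for $m$ large. The limit mass is total, with no escape to infinity, by tightness from the $H=\Z/2^m$-type moment bound at each level. It is therefore a probability law on $\mc S^{(\delta)}_{\sq}$, respectively $\mc S^{(\delta)}_{\nsq}$, after adding $\Z_2^\delta$. Its $H$-moments equal the right-hand sides of \eqref{eq1c}, by the same uniform-integrability argument applied to $\#\Sur$. For uniqueness, take any law on the support with those moments. Its torsion part satisfies the same base-$4$ system with the same growth bound, as noted after \eqref{eq:base4-growth}. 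Theorem \ref{thm:base4-determinacy} then identifies it level by level in $m$.

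The main obstacle is Step 2: justifying the interchange of limit and infinite sum, and ensuring that no mass escapes. The dominating-moment trick inside $\mc P_m$ is what I would try first.
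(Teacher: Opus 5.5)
Your architecture is the paper's. You reduce to the torsion part and work with Hom moments on the base-$4$ grid, then take subsequential limits and pass moments to the limit by uniform integrability. Truncations mod $2^m$ are identified via Theorem~\ref{thm:base4-determinacy}, point masses are recovered from truncations, and the same argument gives uniqueness. Your domination by $H\oplus\Z/2^m\Z$ (i.e.\ $\lambda_i\mapsto\lambda_i+1$) is a valid substitute for the paper's identity $\#\Hom(T,H)^2=\#\Hom(T,H\oplus H)$, and organizing convergence level by level is fine.

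The step that does not hold as written is the no-escape-of-mass claim in Step~3. Tightness of the truncated laws at each fixed level $m$ does not imply that the limiting point masses $\lim_n\bP(T_{n,b}\cong M)$ sum to $1$. For example, take $T_n=(\Z/2^n\Z)^2$: every truncation $T_n/2^mT_n$ is eventually constant, yet $\bP(T_n\cong M)\to0$ for every finite $M$. What you need is exponent tightness, $\limsup_n\bP(\exp(T_{n,b})\ge 2^m)\to0$ as $m\to\infty$. This does not follow from a bare moment bound at $\Z/2^m\Z$. For $b=0$ and $\delta=0$, the Markov bound $\bE\#\Sur(C_{n,0},\Z/2^m\Z)/|\Aut(\Z/2^m\Z)|$ equals $5$ for every $m$, because $|\Gamma(\Z/2^m\Z)|=2^{m+1}$.

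The missing ingredient is the square support. The paper uses the target $(\Z/2^m\Z)^2$. On either support, $\exp(T_{n,b})\ge2^m$ with $m\ge2$ forces a surjection $T_{n,b}\twoheadrightarrow(\Z/2^m\Z)^2$, and $\frac32|\Gamma((\Z/2^m\Z)^2)|/|\Aut((\Z/2^m\Z)^2)|=16\cdot2^{-m}$. In your base-$4$ language the same works with $\lambda=\rho(\Z/2^m\Z)=(1,\dots,1)$:
\begin{itemize}
\item the moment gives $\sum_\nu x^{(n)}_\nu4^{\nu_1+\cdots+\nu_m}=O(2^m)$, after dividing by $2$ in the nonsquare case;
\item $\exp\ge2^m$ forces $\nu_m\ge1$, hence $\nu_1+\cdots+\nu_m\ge m$;
\item Markov then gives $O(2^{-m})$.
\end{itemize}
This bound is also what guarantees that your limiting truncated laws are the truncations of a genuine limit law, which your final uniform-integrability step for $\#\Sur$ presupposes. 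Once it is stated explicitly, your argument is complete.
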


\begin{proof}
We use the random $\Z_2$-modules $C_{n,b}$ and $T_{n,b}$ defined above, with $b\in\{0,1\}$ and $n\equiv\delta\pmod2$. We first prove tightness. Put $E_k=(\Z/2\Z)^k$. If $d(T_{n,b})\ge k$, then $C_{n,b}$ surjects onto $E_k$, and postcomposition gives at least $|\GL_k(\F_2)|$ surjections. Hence
$$
|\GL_k(\F_2)|\, \mathbf{1}_{\{d(T_{n,b})\ge k\}}
\le \#\Sur(C_{n,b},E_k).
$$
Since $|\Gamma(E_k)|=2^{k(k+3)/2}$, it follows from \eqref{eq:Sur-moment-inequality} that, for all sufficiently large $n$,
\begin{equation*}
\bP(d(T_{n,b})\ge k) \le \frac{\bE(\#\Sur(C_{n,b},E_k))}{|\GL_k(\F_2)|}
\le\frac{(3/2)|\Gamma(E_k)|}{|\GL_k(\F_2)|}
\ll2^{-(k^2-3k)/2}.
\end{equation*}
Let $C_m=(\Z/2^m\Z)^2$ with $m\ge2$. Let $\exp(T_{n,b})$ denote the exponent of $T_{n,b}$. On either support, if $\exp(T_{n,b})\ge2^m$, then by Theorem \ref{thm5a} and Proposition \ref{prop:dyadic-support}, there is a surjection $T_{n,b}\twoheadrightarrow C_m$. Since
$$
|\Gamma(C_m)|=2^{3m+2}
\quad\text{and}\quad
|\Aut(C_m)|=3\cdot2^{4m-3},
$$
it follows from \eqref{eq:Sur-moment-inequality} similarly as above that, for all sufficiently large $n$,
\begin{equation*}
\bP(\exp(T_{n,b})\ge2^m)
\le\frac{(3/2)|\Gamma(C_m)|}{|\Aut(C_m)|}
=16\cdot 2^{-m}.
\end{equation*}
Only finitely many finite abelian $2$-groups have bounded rank and exponent, so both conditional sequences are tight.

Fix $b\in\{0,1\}$ and take a convergent subsequence of $T_{n,b}$, still indexed by $n$. Let $T_b$ denote its limit in distribution. Since the state space of finite abelian $2$-groups is countable and discrete, and each of the square and nonsquare support classes is closed, the law of $T_b$ is supported on the same class as the laws of $T_{n,b}$.

We next show that $T_b$ inherits the prescribed moments. For $n\equiv\delta\pmod 2$, we have almost surely
$$
C_{n,b} \cong \Z_2^\delta\oplus T_{n,b}.
$$
Therefore, for every finite abelian $2$-group $H$, we have almost surely
$$
\#\Hom(C_{n,b},H)=|H|^\delta\,\#\Hom(T_{n,b},H).
$$

Also, for every finitely generated $\Z_2$-module $X$
$$
\#\Hom(X,H)=\sum_{J\le H}\#\Sur(X,J).
$$
Hence
$$
\bE(\#\Hom(T_{n,b},H))=|H|^{-\delta}
\sum_{J\le H}\bE(\#\Sur(C_{n,b},J)),
$$
and the right-hand side converges by \eqref{eq1c}.

To pass these moments to the limit, note that
$$
\#\Hom(T_{n,b},H)^2=\#\Hom(T_{n,b},H\oplus H).
$$
Applying the same formula with $H\oplus H$ in place of $H$ gives
$$
\sup_n \bE(\#\Hom(T_{n,b},H)^2)<\infty.
$$
Hence the random variables $\#\Hom(T_{n,b},H)$ are uniformly integrable. Since the state space is discrete and $T_{n,b}$ converges in distribution
to $T_b$, we have
$$
\#\Hom(T_{n,b},H)\xrightarrow{d}\#\Hom(T_b,H)
$$
in distribution. Uniform integrability therefore gives
$$
\bE(\#\Hom(T_{n,b},H)) \rightarrow \bE(\#\Hom(T_b,H)).
$$
Thus $T_b$ has the prescribed Hom moments and satisfies the growth
bound \eqref{eq:base4-growth} with $T_b$ in place of $T_{n,b}$.

Fix $m\ge1$ and put $T_b^{[m]}=T_b/2^mT_b$. On the square support there is a finite group $G_m$, unique up to isomorphism, such that
$$
T_b^{[m]}=G_m\oplus G_m.
$$
On the nonsquare support,
$$
T_b^{[m]}=\Z/2\Z\oplus G_m\oplus G_m.
$$
For targets killed by $2^m$, the Hom moments of $T_b$ are exactly those of $T_b^{[m]}$. Equations \eqref{eq:square-base4} and \eqref{eq:nonsquare-base4} therefore give all base-$4$ moments of the partition $\rho(G_m)\in\mc P_m$. They satisfy \eqref{eq:base4-growth}, so Theorem \ref{thm:base4-determinacy} makes the law of every truncation $T_b^{[m]}$ unique.

Finally, fix a finite abelian $2$-group $M$ and choose $m$ with $2^m>\exp(M)$. For every finite abelian $2$-group $T$,
$$
T/2^mT\cong M
\quad\Longleftrightarrow\quad
T\cong M.
$$
Thus the truncation laws determine every point mass of $T_b$. Any two subsequential limits are equal, so the laws of $T_{n,b}$ converge. Since $C_{n,b}\cong\Z_2^\delta\oplus T_{n,b}$ almost surely, the laws of $C_{n,b}$ also converge. The same argument for two arbitrary laws with the prescribed moments proves uniqueness. 
\end{proof}

\subsection{Explicit point masses of the dyadic limits}
\label{subsec:dyadic-masses}
In this section, we prove Theorem \ref{thm1d}. Put $E=\Z/2\Z$ and, for $\delta\in\{0,1\}$, put
$$
c_{2,\delta}=\prod_{i=\delta}^{\infty}(1-2^{-(2i+1)}).
$$
The explicit even- and odd-dimensional alternating laws of Bhargava, Kane, Lenstra, Poonen and Rains \cite[Theorems 3.9 and 3.11]{BKLPR15} give probability laws $\nu_{\mathrm{alt}}^{(\delta)}$ on $\mc S_{\mathrm{sq}}^{(\delta)}$ with
\begin{align*}
\nu_{\mathrm{alt}}^{(0)}(M)
&=c_{2,0}\frac{|M|}{|\Sp(M)|},\\
\nu_{\mathrm{alt}}^{(1)}(\Z_2\oplus M)
&=\frac{c_{2,1}}{|\Sp(M)|},
\end{align*}
for every finite symplectic $2$-group $M$, with mass zero on every other $\Z_2$-module. Here, $\Sp(M)$ denotes the group of automorphisms of $M$ preserving a fixed nondegenerate alternating pairing.
For both values of $\delta$, their surjection moments are
\begin{equation}\label{eq:alt-parity-sur-moment}
\int\#\Sur(X,H)\,d\nu_{\mathrm{alt}}^{(\delta)}(X)
=|\Sym^2H|.
\end{equation}
To see this, let
$$
Z_g=\#\Sur(\cok(B_{2g+\delta}),H)
$$
for additive Haar-random $B_{2g+\delta}\in\Alt_{2g+\delta}(\Z_2)$. Then \cite[Theorem 3.1]{NW25} and the argument in the proof of \cite[Theorem 1.13]{NW25} give
$$
\lim_{g\to\infty}\bE(Z_g) = |\Sym^2H|.
$$
Moreover,
$$
Z_g^2\le
\#\Hom(\cok(B_{2g+\delta}),H\oplus H)
=\sum_{J\le H\oplus H} \#\Sur(\cok(B_{2g+\delta}),J),
$$
and the expectation of the right hand side is bounded uniformly in $g$ by \cite[Theorem 3.1]{NW25}. 
Thus $(Z_g)$ is uniformly integrable. By \cite[Theorems 3.9 and 3.11]{BKLPR15}, $\cok(B_{2g+\delta})$ converges in distribution to $\nu_{\mathrm{alt}}^{(\delta)}$. Combining this convergence with uniform integrability, we obtain
$$
\int \#\Sur(X,H)\,d\nu_{\mathrm{alt}}^{(\delta)}(X)
=\lim_{g\to\infty}\bE(Z_g)
=|\Sym^2H|.
$$

Since $\nu_{\mathrm{alt}}^{(\delta)}(M_\delta)=w_\delta(M)$, the definitions of $\mu_{\mathrm{sq}}^{(\delta)}$ and $\mu_{\mathrm{nsq}}^{(\delta)}$ give
\begin{equation}\label{eq:parity-square-size-biased-law}
\mu_{\mathrm{sq}}^{(\delta)}(X)
=\frac{\#\Sur(X,E)}2 \, \nu_{\mathrm{alt}}^{(\delta)}(X)
\end{equation}
and
$$
\mu_{\mathrm{nsq}}^{(\delta)}(Y)=
\begin{cases}
\nu_{\mathrm{alt}}^{(\delta)}(X) & \text{if $Y\cong E\oplus X$ for some $X\in\mathcal S_{\mathrm{sq}}^{(\delta)}$,}\\
0 & \text{otherwise.}
\end{cases}
$$
The second measure is obviously a probability law. The first is also a probability law, since \eqref{eq:alt-parity-sur-moment} with target $E$ gives
$$
\int\frac{\#\Sur(X,E)}2\,
d\nu_{\mathrm{alt}}^{(\delta)}(X)
=\frac{|\Sym^2E|}{2}=1.
$$

We first compute the square moments. 
For every finitely generated $\Z_2$-module $X$
and every finite abelian $2$-group $H$,
\begin{equation}\label{eq:paired-surjection-identity}
\#\Sur(X,E)\#\Sur(X,H)
=\#\Sur(X,E\oplus H)
+\#\Sur(H,E)\#\Sur(X,H).
\end{equation}
Indeed, let $(a,b):X\to E\oplus H$ be defined by two surjections $a$ and $b$. If $(a,b)$ is not surjective, its image still projects surjectively to $H$ and has order at least $|H|$. Since it is a proper subgroup of $E\oplus H$, it has order $|H|$, and the projection to $H$ is an isomorphism. Hence the image is the graph of a unique surjection $H\twoheadrightarrow E$. Conversely every such graph gives a nonsurjective pair. Put $r=d(H)$. Since
$$
\#\Sur(H,E)=2^r-1
$$
and
$$
\Sym^2(E\oplus H) \cong\Sym^2E\oplus(E\otimes H)\oplus\Sym^2H,
$$
we have
$$
|\Sym^2(E\oplus H)|=2^{r+1}|\Sym^2H|.
$$
Equations \eqref{eq:alt-parity-sur-moment}, \eqref{eq:parity-square-size-biased-law}, and \eqref{eq:paired-surjection-identity} therefore give, for either $\delta$,
\begin{align*}
\int\#\Sur(X,H)\,d\mu_{\mathrm{sq}}^{(\delta)}(X)
&=\frac12 (2^{r+1}+2^r-1)|\Sym^2H|\\
&=|\Gamma(H)|\left(\frac32-2^{-r-1}\right),
\end{align*}
where the last equality uses \eqref{eq:gamma-size}.

We next compute the nonsquare moments. A homomorphism $E\oplus X\to H$ is determined by the image $h\in H[2]$ of the generator of $E$ and by a homomorphism $X\to H$. If $J$ is the image of the latter, then the combined map is surjective exactly when $H=J+\angles{h}$. If $J=H$, the expected contribution is $2^r|\Sym^2H|$. Otherwise $J$ has index $2$ and equals $\ker\chi$ for a nonzero functional $\chi:H\to E$. Such an $h$ exists exactly when $\chi|_{H[2]}\ne0$, and then there are $2^{r-1}$ choices for $h$. The restriction map
$$
\Hom(H,E)\rightarrow\Hom(H[2],E)
$$
has rank $t(H)$, so the number of these functionals is $2^r-2^{r-t(H)}$. For such $\chi$, the affine hyperplane
$$
\{h\in H[2]:\chi(h)=1\}
$$
has $2^{r-1}$ elements. Choose one of them. Then
$$
H=J\oplus\angles{h}\cong J\oplus E,
\quad d(J)=r-1.
$$
Using
$$
\Sym^2(J\oplus E)
\cong\Sym^2J\oplus(J\otimes E)\oplus\Sym^2E,
$$
we obtain
$$
|\Sym^2H|=2^{d(J)+1}|\Sym^2J|=2^r|\Sym^2J|.
$$
Using \eqref{eq:alt-parity-sur-moment}, we obtain, again for either $\delta$,
\begin{align*}
\int\#\Sur(Y,H)\,d\mu_{\mathrm{nsq}}^{(\delta)}(Y)
&=2^r|\Sym^2H|
+(2^r-2^{r-t(H)})2^{r-1}2^{-r}|\Sym^2H|\\
&=|\Gamma(H)|\left(\frac32-2^{-t(H)-1}\right).
\end{align*}
Thus the two measures have the moments in \eqref{eq1c}. 
By Theorem \ref{thm:dyadic-conditional-determinacy}, the limiting laws of $C_{n,0}$ and $C_{n,1}$ are $\mu_{\mathrm{sq}}^{(\delta)}$ and
$\mu_{\mathrm{nsq}}^{(\delta)}$, respectively. Thus Theorem \ref{thm1d}(b) follows.
Since each fiber of the spinor character has Haar measure $1/2$, Theorem \ref{thm1d}(c) follows as well. Parts (a) and (d) are proved above, completing the proof of Theorem \ref{thm1d}.

\section*{Acknowledgments}

Jungin Lee was supported by the National Research Foundation of Korea (NRF) grant funded by the Korea government (MSIT) (No. RS-2024-00334558 and No. RS-2025-02262988). Myungjun Yu was supported by the National Research Foundation of Korea (NRF) grant funded by the Korea government (MSIT) (No. RS-2025-23525445).

\section*{Statement on AI use}

The authors conceived the project of studying the linearization of the random $p$-adic orthogonal matrix model, formulated the central research questions, and established the overall research direction. They proposed determining the limiting cokernel distributions through moment computations as the basic strategy.

Building on the research questions and strategy proposed by the authors, OpenAI's ChatGPT 5.6 and 6 Pro developed the specific mathematical arguments and generated proofs of the main results through an iterative dialogue with the authors. In particular, for $p=2$, ChatGPT identified that linearization does not preserve the limiting cokernel distribution and proposed using the spinor norm to distinguish the two parity cases and determine the corresponding limiting distributions.

The authors critically evaluated and revised the model's suggestions, independently verified all mathematical arguments, and wrote the final manuscript. They take full responsibility for the correctness of the results and the content of the paper.


\end{document}